\numberwithin{equation}{section}
\newcommand{\mi}{\bbi\xspace}
\DeclareMathSymbol{\varnothing}{\mathord}{AMSb}{"3F}
\DeclareMathOperator{\im}{Im}
\DeclareMathOperator*{\res}{Res}
\DeclareMathOperator{\tr}{tr}
\newcommand{\bbC}{\mathbb{C}}
\newcommand{\bbN}{\mathbb{N}}
\newcommand{\bbP}{\mathbb{P}}
\newcommand{\bbR}{\mathbb{R}}
\newcommand{\bbZ}{\mathbb{Z}}
\theoremstyle{plain}
\newtheorem{theorem}{Theorem}[section]
\newtheorem*{theorem*}{Theorem}
\newtheorem{corollary}[theorem]{Corollary}
\newtheorem*{corollary*}{Corollary}
\newtheorem{proposition}[theorem]{Proposition}
\newtheorem*{proposition*}{Proposition}
\newtheorem{lemma}[theorem]{Lemma}
\newtheorem*{lemma*}{Lemma}
\newtheorem{example}[theorem]{Example}
\newtheorem*{example*}{Example}
\newtheorem{definition}[theorem]{Definition}
\newtheorem*{definition*}{Definition}
\newtheorem*{notation*}{Notation}
\newtheorem*{remark*}{Remark}
\numberwithin{figure}{section}
\newcommand{\bbi}{\mathbbm{i}}
\newlength{\PWIDTH}
\newlength{\PSPACE}
\title{Dressing curves}
\author{M.~Kilian}
\address{M. Kilian, Department of Mathematics,
University College Cork, Ireland.}
\email{m.kilian@ucc.ie}
\thanks{{\it Mathematics Subject Classification.} 53A04, 37K15. \today}
\begin{document}

\begin{abstract}
We present a loop group description for curves in $\mathbb{R}^3$, and apply it to classify the circletons: Circles dressed by simple factors.
\end{abstract}

%%%%%%%%%%%%%%%%%%%%%%%%%%%%%%%%%%%%%%%%%%%%%%%%%%%%%%%%%%%%%%%%

\maketitle

\section*{Introduction}
Zakharov and Shabat \cite{ZakS2} introduced a method called dressing which provides a construction of new solutions from old ones in the theory of integrable equations. In particular, dressing by specific very simple maps corresponds to the classical transformations already discovered by 19th century geometers like Darboux, B\"acklund and Bianchi. In the modern theory of integrable systems these simple maps are called simple factors \cite{TerU}.  

Consider a smooth closed curve in $\mathbb{R}^3$. It neccessarily has periodic curvature and torsion functions, but conversely periodicity of curvature and torsion is not suffiecient to yield a closed curve. One method to obtain a wealth of examples of closed curves is to deform a given closed curve, and keeping it closed during the deformation, as the isoperiodic deformation by Grinevich and Schmidt \cite{GriS1, GriS-curves}. Calini and Ivey \cite{CalI2} use this deformation to deform multiply wrapped circles and study the resulting knot type. While the isoperiodic deformation is a continuous deformation, dressing by simple factors is not, and constructs closed curves that are not slight pertubations of the original curve. Calini and Ivey \cite{calini-ivey1998} present a B\"acklund transformation for curves which is very different to our approach via loop groups.

Starting with a multiply wrapped circle we investigate what happens when it is dressed by suitably chosen simple factors that preserve periodicity. It turns out that for a circle with wrapping number $\omega$ there are precisly $\omega -1$ simple factors which do this. In analogy to the solitons we call the resulting closed curves circletons. We classify the single-circletons: curves obtained by dressing mutiply wrapped circles by one simple factor. This then immediately also leads to the classification of multi-circletons: curves obtained by dressing  mutiply wrapped circles by products of simple factors. 

%
%%%%%%%%%%%%%%%%%%%%%%%%%%%%%%%%%
%%%%%%%%%%%%%%%%%%%%%%%%%%%%%%%%%
%

\section{Extended frames}

Let $q: \bbR \to \bbC$ be a smooth map, and set 
\[
	 V(q) = \frac{1}{2}\,\begin{pmatrix} \mi \lambda & q \\ -\bar{q} & -\mi\lambda \end{pmatrix}
\]
Here $\lambda \in \bbC$ is an additional parameter. Let $\dot{}$ denote differentiation with respect to $t \in \mathbb{R}$. The unique solution to the inital value problem 
\[
	\dot{F}_\lambda = F_\lambda\,V(q)\,,\qquad F_\lambda (0) = \mathbbm{1}
\]
gives for each $\lambda$ a map $F_\lambda :\bbR \to \mathrm{SL}_2(\bbC)$. It is entire in $\lambda$ and smooth in $t$. Since $V(q) \in \mathfrak{su}_2$ for all $\lambda \in \bbR$, it follows that $F_\lambda \in \mathrm{SU}_2$ for all $\lambda \in \bbR$. Suppose $\kappa,\,\tau:\mathbb{R} \to \mathbb{R}$ are smooth functions. Hashimoto \cite{hasimoto1972} defines the {\emph{complex curvature function}}
\[
	q(t) = \kappa (t) \exp [\,\mi \int_0^t \tau (s)\,ds\,] \,.
\]
Indeed a straightforward computation shows that 
\[
	\gamma (t) = 2 \left. F'_\lambda \right|_{\lambda =0} \,F^{-1}_0
\]
is an arc-length parameterized curve in $\mathfrak{su}_2 \cong \bbR^3$ with curvature $\kappa$ and torsion $\tau$. Here $\prime$ denotes differentiation with respect to $\lambda$. The map $F_\lambda$ is called an {\emph{extended frame}} of the curve $\gamma$. Under the identification  $\bbR^3 \cong \mathfrak{su}_2$, we have
\[
	| X | = \sqrt{ \det X}\,,\quad \langle X,\,Y \rangle = -\tfrac{1}{2} \tr (X\,Y)\,,\quad \mbox{and} \quad X \times Y = \tfrac{1}{2} [\,X,\,Y\,]\,.
\]
We sometimes need to pick an orthonormal basis of $\mathfrak{su}_2$, so set
\[
	\sigma_1 = \begin{pmatrix} 0 & 1 \\ -1 & 0 \end{pmatrix}\,, \quad
	\sigma_2 = \begin{pmatrix} 0 & \mi \\ \mi & 0 \end{pmatrix}\,, \quad
	\sigma_3 = \begin{pmatrix} \mi & 0 \\ 0 & -\mi \end{pmatrix} .
\]
Suppose $\kappa,\,\tau$ are periodic, so that $\kappa(t + \varrho) = \kappa(t)$ and $\tau(t + \varrho) = \tau(t)$ for all $t \in \bbR$ for some period $\varrho \in \mathbb{R}^\times$. Then for the {\emph{monodromy}} $M_\lambda (\varrho) = F_\lambda (\varrho)$ we have that 
\[
	\gamma(t + \varrho) = \gamma(t) \qquad \mbox{ for all } t \in \bbR
\]
is equivalent to 
\begin{equation*} %\label{eq:monodromy-closing}
	M_0 (\varrho) = \pm \mathbbm{1} \quad \mbox{and} \quad  \left. M'_\lambda (\varrho) \right|_{\lambda =0} = 0
\end{equation*}
%
%%%%%%%%%%%%%%%%%%%%%%%%%%%%%
%%%%%%%%%%%%%%%%%%%%%%%%%%%%%
%
\section{Birkhoff Factorization}
Let $r>0$ and define
\begin{align*}
	C_r &= \{ \lambda \in \bbC \mid \| \lambda \| = r \}\,, \\
	D_r &= \{ \lambda \in \bbC \mid \| \lambda \| < r \}\,, \\
	E_r &= \{ \lambda \in \bbC \mid \| \lambda \| > r \} \cup \{ \infty \} \,.
\end{align*}
Next, define the following $r$-loop groups of $\mathrm{SL}_2 (\mathbb{C})$ by 
\begin{align*}
	\Lambda_r  \mathrm{SL}_2 (\mathbb{C}) &= \{ \mbox{ Analytic maps } C_r \to \mathrm{SL}_2 (\mathbb{C}) \}\,, \\
	\Lambda_r^+  \mathrm{SL}_2 (\mathbb{C}) &= \{ \,g \in \Lambda_r \mathrm{SL}_2 (\mathbb{C}) \mid g \mbox{ extends analytically to } D_r  \}\,, \\
	\Lambda_r^-  \mathrm{SL}_2 (\mathbb{C}) &= \{ \,g \in \Lambda_r \mathrm{SL}_2 (\mathbb{C}) \mid g \mbox{ extends analytically to } E_r  \}\,, \\
	\Lambda_{r,\mathbbm{1}}^-  \mathrm{SL}_2 (\mathbb{C}) &= \{ \,g \in \Lambda_r^- \mathrm{SL}_2 (\mathbb{C}) \mid g(\infty) = \mathbbm{1} \}\,.
\end{align*}
For $g \in \Lambda_r \mathrm{SL}_2 (\mathbb{C})$ there is a factorization $g = g_+ D\,g_-$, where $g_- \in \Lambda_r^- \mathrm{SL}_2 (\mathbb{C}),\,g_+ \in \Lambda_r^+ \mathrm{SL}_2 (\mathbb{C})$, and $D = \mathrm{diag}[\,\lambda^n,\,\lambda^{-n}\,]$ for some $n \in \mathbb{Z}$. To obtain uniqueness of the factors in the Birkhoff factorization we impose the additional condition that $g_- \in \Lambda_{r,\mathbbm{1}}^-  \mathrm{SL}_2 (\mathbb{C})$. 

{\bf{Birkhoff Factorization}} \cite{PreS, McI} For every $g \in \Lambda_r  \mathrm{SL}_2 (\mathbb{C})$ there exists a unique factorization  
\begin{equation*} %\label{eq:birkhoff}
	g = g_+ D\,g_-
\end{equation*} 
with $g_- \in \Lambda_{r,\mathbbm{1}}^- \mathrm{SL}_2 (\mathbb{C}),\,g_+ \in \Lambda_r^+ \mathrm{SL}_2 (\mathbb{C})$, and $D = \mathrm{diag}[\,\lambda^n,\,\lambda^{-n}\,]$ for some $n \in \mathbb{Z}$ is a diagonal matrix. When $n = 0$, so that $D = \mathbbm{1}$, then $g$ is said to lie in the \emph{big cell} of $\Lambda_r \mathrm{SL}_2 (\mathbb{C})$.

Since the domains of the above $r$-loop groups are invariant under complex conjugation it makes sense to consider $g(\bar{\lambda})$. Define
\[
	g^*(\lambda) = \overline{g(\bar{\lambda})^t}^{\,-1}.
\]
The {\emph{reality condition}} $g(\lambda) \in \mathrm{SU}_2$ for all real $\lambda$ in the domain of $g$ is then conveniently written as 
\begin{equation*} %\label{eq:reality}
	g^* = g\,.
\end{equation*}
We need notation for the subgroups whose elements satisfy this reality condition, and denote
\begin{align*}
	\Lambda_{r,*}  \mathrm{SL}_2 (\mathbb{C}) &= \{ g \in \Lambda_{r}  \mathrm{SL}_2 (\mathbb{C})  \mid g^* = g \}\,, \\
	\Lambda_{r,*}^+  \mathrm{SL}_2 (\mathbb{C}) &= \{ \,g \in \Lambda_r^+ \mathrm{SL}_2 (\mathbb{C}) \mid  g^* = g \}\,, \\
	\Lambda_{r,*}^-  \mathrm{SL}_2 (\mathbb{C}) &= \{ \,g \in \Lambda_r^- \mathrm{SL}_2 (\mathbb{C}) \mid g^* = g \}\,, \\
	\Lambda_{r,*\mathbbm{1}}^-  \mathrm{SL}_2 (\mathbb{C}) &= \{ \,g \in \Lambda_{r,\mathbbm{1}}^- \mathrm{SL}_2 (\mathbb{C}) \mid g^* = g \}\,.
\end{align*}
\begin{lemma} Let $g \in \Lambda_{r,*} \mathrm{SL}_2 (\mathbb{C})$. Then $g$ lies in the big cell, and has a unique factorization $g = g_+g_-$ with  $g_- \in \Lambda_{r,*\mathbbm{1}}^- \mathrm{SL}_2 (\mathbb{C})$ and $g_+ \in \Lambda_{r,*}^+ \mathrm{SL}_2 (\mathbb{C})$. 
\end{lemma}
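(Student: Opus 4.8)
The plan is to deduce this from the (non-reality) Birkhoff factorization already stated, by showing that the reality condition $g^* = g$ (i) forces the middle term $D$ to be trivial, and (ii) is inherited by the unique factors $g_\pm$.

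First I would apply the Birkhoff Factorization theorem to $g \in \Lambda_{r,*}\mathrm{SL}_2(\bbC)$, obtaining a unique $g = g_+ D g_-$ with $D = \diag[\lambda^n,\lambda^{-n}]$, $g_- \in \Lambda_{r,\id}^-\mathrm{SL}_2(\bbC)$, $g_+ \in \Lambda_r^+\mathrm{SL}_2(\bbC)$. Now observe that the involution $h \mapsto h^*$ has the following properties: it is an anti-homomorphism reversing order of products (since $(hk)^* = \overline{(hk)(\bar\lambda)^t}^{-1} = \overline{k(\bar\lambda)^t h(\bar\lambda)^t}^{-1} = h^*k^*$ — actually one checks it is a homomorphism because of the inverse; in any case the key point is that it carries $\Lambda_r^+$ to $\Lambda_r^+$ and $\Lambda_r^-$ to $\Lambda_r^-$, since it preserves the domains $D_r$, $E_r$ respectively, as complex conjugation fixes $C_r$ and swaps neither $D_r$ nor $E_r$). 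Moreover $h^*(\infty) = \overline{h(\infty)^t}^{-1}$, so $h^*$ fixes $\Lambda_{r,\id}^-\mathrm{SL}_2(\bbC)$ as well. Applying $*$ to $g = g_+ D g_-$ and using $g^* = g$ gives $g = g_+^* D^* g_-^*$. Since $D = \diag[\lambda^n, \lambda^{-n}]$ is diagonal with real entries on $C_r$, one computes $D^* = \diag[\bar\lambda^{-n}, \bar\lambda^{n}]^{\,t,-1}$... more carefully, $D^*(\lambda) = \overline{D(\bar\lambda)^t}^{-1} = \overline{\diag[\bar\lambda^n, \bar\lambda^{-n}]}^{-1} = \diag[\lambda^{-n},\lambda^n]$. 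So $g = g_+^* \diag[\lambda^{-n},\lambda^n] g_-^* = g_+^* D^{-1} g_-^*$.

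Next comes the crucial uniqueness step. I have two Birkhoff factorizations of $g$: one with middle term $D$ and one with middle term $D^{-1} = \diag[\lambda^{-n},\lambda^n]$. But the middle term of a Birkhoff factorization is \emph{not} literally unique as written ($D$ and $D^{-1}$ differ only by swapping the diagonal entries, which corresponds to multiplying on both sides by the Weyl group element $\left(\begin{smallmatrix}0&1\\-1&0\end{smallmatrix}\right)$). The clean way around this is: the integer $|n|$ (equivalently the double coset $\Lambda_r^+\backslash\Lambda_r\mathrm{SL}_2(\bbC)/\Lambda_r^-$, i.e. the Birkhoff stratum) is an invariant of $g$. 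The involution $*$ preserves $g$, hence preserves its stratum; but $*$ visibly sends the stratum of $D$ to that of $D^{-1}$, which is again the stratum of $|n|$ — this gives no contradiction directly. So instead I argue positively that $n = 0$: I would first establish that $n \geq 0$ for every $g \in \Lambda_{r,*}\mathrm{SL}_2(\bbC)$ (this is the standard fact that reality-type loops lie in the nonnegative strata — it follows because $g_+^* g_+$ is a positive-type factor, or more concretely from $g = g_+ D g_-$ and $g = g^* = g_-^* D^* g_+^*$, writing it as $g_+^{-1} g (g_+^*)^{-1} = D g_- (g_+^*)^{-1} = (g_+^{-1} g_-^*) D^*$, wait — the honest route is the classical one: pair $g$ with $g^*=g$ to see $g(g^*)^{-1} = \id$, so actually the relevant statement is that a loop satisfying $g^*=g$ never leaves the big cell). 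The cleanest argument: from $g = g_+ D g_-$ and $g^* = g$ we get $g_+ D g_- = g_-^* D^* g_+^*$, hence
\[
	(g_-^*)^{-1} g_+ \, D = D^* \, g_+^* g_-^{-1}.
\]
The left side, split as $\left[(g_-^*)^{-1} g_+\right] D$, has its "$-$ part" times $D$ structure, and the right side $D^*\left[g_+^* g_-^{-1}\right]$ has "$D^*$ times $+$ part" structure. Reading off the two sides as elements of $\Lambda_r\mathrm{SL}_2(\bbC)$ and comparing their (unique) Birkhoff data forces $n = 0$: because the left-hand side lies in the stratum $n$ while the right-hand side lies in the stratum $-n$, and a single element has a single stratum, we get $n = -n$, so $n = 0$. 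Hence $D = \id$ and $g = g_+ g_-$ with $g_- \in \Lambda_{r,\id}^-$, $g_+ \in \Lambda_r^+$, i.e. $g$ lies in the big cell.

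Finally, for the reality of the factors: with $D = \id$, applying $*$ gives $g = g_+^* g_-^*$, and now $g_-^* \in \Lambda_{r,\id}^-\mathrm{SL}_2(\bbC)$, $g_+^* \in \Lambda_r^+\mathrm{SL}_2(\bbC)$ by the domain-preservation remarks above. This is a second big-cell factorization of the same $g$; by the uniqueness clause of the Birkhoff theorem (specialized to $n=0$), $g_+^* = g_+$ and $g_-^* = g_-$. Thus $g_+ \in \Lambda_{r,*}^+\mathrm{SL}_2(\bbC)$ and $g_- \in \Lambda_{r,*\id}^-\mathrm{SL}_2(\bbC)$, and uniqueness within this smaller class is immediate from uniqueness in the larger one.

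\textbf{Main obstacle.} The one step needing care is the sign/stratum argument that $n = 0$ — it is easy to write down $g = g_+^* D^{-1} g_-^*$ and then wave hands; the honest point is that the Birkhoff stratum is a well-defined $\Lambda_r^+$–$\Lambda_r^-$ double-coset invariant, that $*$ is an anti-involution preserving double cosets up to the swap $D \leftrightarrow D^{-1}$, and that $g$ having a single well-defined stratum forces $n = -n$. Everything else (that $*$ preserves $\Lambda_r^\pm$ and the normalization at $\infty$, that $D^* = D^{-1}$, and the transfer of reality to the factors via uniqueness) is routine once the big-cell statement is in hand.
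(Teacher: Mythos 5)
Your overall scheme (apply $*$ to the Birkhoff factorization, force $D=\id$, then transfer reality to the factors by big-cell uniqueness) is the paper's scheme, and your last step agrees with the paper verbatim. But the central step -- showing $n=0$ -- has a genuine gap, in two places. First, $*$ is a homomorphism, not an anti-homomorphism (transpose and inverse each reverse order, so together they do not), a point you waver on and then get wrong: from $g^*=g$ the correct identity is $g_+Dg_-=g_+^*D^{-1}g_-^*$, not $g_+Dg_-=g_-^*D^*g_+^*$. Your displayed equation $(g_-^*)^{-1}g_+\,D=D^*\,g_+^*g_-^{-1}$ therefore has its factors in the order (minus)(plus)$D$ and $D^{-1}$(plus)(minus), which is not a Birkhoff-ordered expression, so one cannot ``read off Birkhoff data'' from either side. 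Second, and more seriously, even after fixing the order the contradiction you invoke does not exist at the level of strata: the Birkhoff strata are the double cosets $\Lambda_r^+D_n\Lambda_r^-$, which are invariant under $n\mapsto -n$ (conjugate by the constant Weyl element), so they are indexed by $|n|$ -- exactly as you yourself observe earlier in the same paragraph. Hence ``LHS lies in stratum $n$, RHS in stratum $-n$, so $n=-n$'' proves nothing; what you actually need is the sharper fact that in the \emph{normalized} factorization (with $g_-(\infty)=\id$) the middle term is unique \emph{including the sign} of $n$, and that is not contained in the quoted Birkhoff theorem in any form you may simply cite (indeed, outside the big cell even the factors $g_\pm$ are not unique), nor do you prove it.

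The missing content is precisely what the paper supplies. With the correct order one gets a single loop
\begin{equation*}
X \;=\; \bigl((g_+^*)^{-1}g_+\bigr)\,D \;=\; D^{-1}\,\bigl(g_-^*\,g_-^{-1}\bigr),
\end{equation*}
with $(g_+^*)^{-1}g_+\in\Lambda_r^+\mathrm{SL}_2(\bbC)$ and $g_-^*g_-^{-1}\in\Lambda_{r,\mathbbm{1}}^-\mathrm{SL}_2(\bbC)$, and one must extract a contradiction for $n\neq 0$ by hand: compare matrix entries across the circle $C_r$ and use Liouville's theorem. The paper does this by setting $G_+=g_+^{-1}g_+^*$, $G_-=g_-g_-^{*-1}$, noting that the reality condition makes the diagonal entries of $G_-$ positive on $\bbR\cap E_r$ (for real $\lambda$, $G_-=g_-\,\overline{g_-}^{\,t}$ is positive definite), and then showing via $\lambda^{-n}G_+^{(11)}=\lambda^{n}G_-^{(11)}$ (resp.\ the $(2,2)$ entries) that a diagonal entry would have to vanish identically -- contradiction. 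An equivalent repair of your route is to prove the sign-uniqueness of the normalized middle term directly: from $AD=D^{-1}B$ with $A\in\Lambda_r^+$, $B\in\Lambda_{r,\mathbbm{1}}^-$ one finds the off-diagonal entries of $A$ and $B$ vanish and then $A_{11}\equiv 0$ (for $n>0$), contradicting $\det A=1$. Either way, an entrywise Liouville argument of this kind is indispensable and is the heart of the lemma; without it your proof does not close.
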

\begin{proof}
Suppose $g$ does not lie in the big cell, and $D(\lambda) = \mathrm{diag} [\,\lambda^n,\,\lambda^{-n}\,]$ with $n \neq 0$. Then $g_+^* D^{-1} g_-^* =g_+ D g_-$. Rearranging this gives 
\[
g_+^{-1} g_+^* D^{-1} = D g_- g_-^{*^{-1}}
\]
Set $G_+ = g_+^{-1}g_+^*$ and $G_- = g_-g_-^{*-1}$. Clearly $G_+ \in \Lambda_r^+ \mathrm{SL}_2 (\mathbb{C})$ and $G_- \in \Lambda_{r,\mathbbm{1}}^- \mathrm{SL}_2 (\mathbb{C})$. Since the diagonal entries of $G_-$ are positive and real for $\lambda \in \bbR \cap E_r$, we conclude that also $G_+$ has non-zero diagonal entries. If $n<0$, then the entries of the first column of $G_+ D^{-1}$  are holomorphic functions at $\lambda = 0$, while the first row of $DG_-$ are holomorphic at $\lambda = \infty$. In particular for the first diagonal entries we get the relation
\[
	\lambda^{-n} G_+^{(11)} = \lambda^{n} G_-^{(11)}\,.
\]
This implies that $G_+^{(11)} = G_-^{(11)} \equiv 0$, giving the contradiction. 
The case $n >0$ is proven similarly by considering the second diagonal entries. 
Thus $g$ lies in the big cell and has a unique Birkhoff factorization $g = g_+ g_-$. By the reality condition we also have the Birkhoff factorization $g = g_+^* g_-^*$, so by uniqueness of the Birkhoff factorization we conclude $g_+ = g_+^*,\,g_- = g_-^*$.
\end{proof}
\begin{corollary} Let $g \in \Lambda_{r,*\mathbbm{1}}^-  \mathrm{SL}_2 (\mathbb{C})$ and $h \in \Lambda_{r,*}^+  \mathrm{SL}_2 (\mathbb{C})$. Then both $gh$ and $hg$ lie in the big cell of $\Lambda_r \mathrm{SL}_2 (\mathbb{C})$.
\end{corollary}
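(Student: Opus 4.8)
The plan is to reduce the statement directly to the Lemma by checking that $gh$ and $hg$ satisfy the reality condition $(\,\cdot\,)^* = (\,\cdot\,)$, so that they belong to $\Lambda_{r,*}\mathrm{SL}_2(\mathbb{C})$, whereupon the Lemma puts them in the big cell. First I would record the structural fact that $g \mapsto g^*$ is a \emph{group homomorphism} on $\Lambda_r \mathrm{SL}_2(\mathbb{C})$: the circle $C_r$ is invariant under $\lambda \mapsto \bar\lambda$, entrywise complex conjugation is a ring homomorphism, and transpose and inverse are each anti-automorphisms of $\mathrm{SL}_2(\mathbb{C})$, so the composite $A \mapsto \overline{A^t}^{\,-1}$ is an automorphism. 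Concretely, for $a,b \in \Lambda_r \mathrm{SL}_2(\mathbb{C})$ one has $(ab)^*(\lambda) = \overline{(a(\bar\lambda)\,b(\bar\lambda))^t}^{\,-1} = \overline{a(\bar\lambda)^t}^{\,-1}\,\overline{b(\bar\lambda)^t}^{\,-1} = a^*(\lambda)\,b^*(\lambda)$.

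Given this, the argument is immediate. Since $g^* = g$ and $h^* = h$ by hypothesis, we get $(gh)^* = g^* h^* = gh$ and, likewise, $(hg)^* = h^* g^* = hg$. Both $gh$ and $hg$ are analytic maps $C_r \to \mathrm{SL}_2(\mathbb{C})$ — indeed $g$ extends analytically to $E_r$ and $h$ to $D_r$, so each product is at least analytic on $C_r$ — hence $gh,\,hg \in \Lambda_{r,*}\mathrm{SL}_2(\mathbb{C})$. Applying the Lemma to each of them shows that $gh$ and $hg$ lie in the big cell of $\Lambda_r \mathrm{SL}_2(\mathbb{C})$, which is precisely the assertion.

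I do not expect any genuine obstacle here: the whole content is the observation that the reality condition is stable under multiplication, after which the Lemma does all the work. The only point deserving a line of care is verifying that $g \mapsto g^*$ is a homomorphism and not an anti-homomorphism, which holds exactly because it is assembled from two anti-automorphisms (transpose and inverse) together with one automorphism (entrywise conjugation precomposed with $\lambda \mapsto \bar\lambda$); an odd number would have reversed the order of the factors and the reduction would fail.
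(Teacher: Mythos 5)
Your argument is correct and is exactly the intended one: the paper leaves this corollary unproved precisely because it follows from the Lemma once one notes that the reality condition is preserved under products, which is the multiplicativity of $g \mapsto g^*$ that you verify. Your extra care about $*$ being a homomorphism rather than an anti-homomorphism (two order-reversing operations composing to an order-preserving one) is the only point of substance, and you handle it correctly.
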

%
%%%%%%%%%%%%%%%%%%%%%%%%%%%%%%%%%%%%%%%%%%%%%
%%%%%%%%%%%%%%%%%%%%%%%%%%%%%%%%%%%%%%%%%%%%%
%

\section{Dressing}
For $g \in \Lambda_r^-  \mathrm{SL}_2 (\mathbb{C})$ and  $h \in  \Lambda_r^+  \mathrm{SL}_2 (\mathbb{C})$ define 
\[
	g \# h = (g\,h)_+ \,.
\]
This defines an action $\Lambda_r^-  \mathrm{SL}_2 (\mathbb{C}) \times \Lambda_r^+  \mathrm{SL}_2 (\mathbb{C}) \to \Lambda_r^+  \mathrm{SL}_2 (\mathbb{C})$ called the {\emph{dressing action}}.
\subsection{Potentials}
Consider meromorphic maps on $E_r$ which only have one pole, namely a simple pole at $\lambda = \infty$, and take values in $\mathfrak{sl}_2 (\mathbb{C})$, take values in $\mathfrak{su}_2$ along $\lambda \in \mathbb{R}$ and the first two terms in the Laurent expansion are restricted:
\[
	\Lambda_{r,*}^{-\infty,1} \mathfrak{sl}_2 (\mathbb{C}) = \left\{ \xi (\lambda) =  \sum_{j\leq 1} \xi_j \lambda^{j} \mbox{ with } \xi_j \in \mathfrak{su}_2 \mbox{ for all } j,\,\xi_1 = \tfrac{1}{2} \sigma_3,\, \xi_0 \perp \xi_1  \right\} .
\] 
Smooth 1-forms on $\mathbb{R}$ of the form
\[
	\eta(t,\,\lambda) = \tfrac{1}{2}\sigma_3 \lambda dt + \xi_0 (t) dt +   	 
\sum_{j\leq -1} \xi_j (t) \lambda^{j} dt \mbox{ with } \xi_j:\mathbb{R} \to \mathfrak{su}_2 \mbox{ smooth, and }  \xi_0(t) \perp \sigma_3 \,\forall t \in \mathbb{R}
\]
will be called {\emph{potentials}}. The proof of the next proposition follows immediately by comparing coefficients of series expansions.
\begin{proposition} \label{th:curve-dpw}
Let $\eta$ be a potential. Let $\phi_0 \in \Lambda_{r,*\mathbbm{1}}^-  \mathrm{SL}_2 (\mathbb{C})$, and 
$\phi :\bbR \times  E_r \setminus \{\infty\} \to \mathrm{SL}_2(\mathbb{C})$ the unique solution of the initial value problem
\[
	d\phi = \phi \eta\,,\quad \phi(0,\,\lambda) = \phi_0\,.
\]
Then $\phi$ is in the big cell for all $t \in \mathbb{R}$, so has for each $t \in \mathbb{R}$ a unique Birkhoff factorization
\[
	\phi(t,\,\lambda) = \phi_+(t,\,\lambda) \,\phi_-(t,\,\lambda)
\]
with smooth maps $\phi_+: \mathbb{R} \to \Lambda^+_{r,*} \mathrm{SL}_2 (\mathbb{C})$,  $\phi_-: \mathbb{R} \to \Lambda^-_{r,*\mathbbm{1}} \mathrm{SL}_2 (\mathbb{C})$, and
\[
	\phi_+^{-1} \dot{\phi}_+ = \frac{1}{2}\,\begin{pmatrix} \mi \lambda & f(t,\,\lambda) \\ -f^*(t,\,\lambda) & -\mi\lambda \end{pmatrix}
\]
for some smooth map $f:\mathbb{R} \times  I_r \to \mathbb{C}$. Further, the curve $\gamma: \mathbb{R} \to \mathfrak{su}_2$ defined by 
\[
	\gamma (t) = 2 \left. \phi_+' \,\phi_+^{-1}  \right|_{\lambda =0} 
\]
is an arc-length parameterized curve with complex curvature $ f(t,\,0)$.
\end{proposition}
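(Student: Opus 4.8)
The plan is to run the standard DPW-type argument; everything reduces to comparing Laurent coefficients in $\lambda$, as the author indicates.

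\emph{Step 1 (reality, big cell, smoothness).} Write $\eta=A\,dt$, so $A(t,\lambda)=\tfrac12\sigma_3\lambda+\xi_0(t)+\sum_{j\le-1}\xi_j(t)\lambda^j$. Since $\xi_j(t),\sigma_3\in\mathfrak{su}_2$, one checks $\eta^*=\eta$ for the Lie-algebra version of the $*$-operation. Hence $\phi^*$ solves the same initial value problem $d(\phi^*)=\phi^*\eta$, $\phi^*(0,\cdot)=\phi_0^*=\phi_0$, and by uniqueness of solutions $\phi^*=\phi$; thus $\phi(t,\cdot)\in\Lambda_{r,*}\mathrm{SL}_2(\bbC)$ for every $t$ (analyticity across $C_r$ holds since $A$ is analytic on a neighbourhood of $C_r$). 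The Lemma then gives that $\phi(t,\cdot)$ lies in the big cell and factors uniquely as $\phi=\phi_+\phi_-$ with $\phi_+(t,\cdot)\in\Lambda^+_{r,*}\mathrm{SL}_2(\bbC)$ and $\phi_-(t,\cdot)\in\Lambda^-_{r,*\mathbbm{1}}\mathrm{SL}_2(\bbC)$; smoothness in $t$ of $\phi_\pm$ follows from smoothness of $t\mapsto\phi(t,\cdot)$ together with smoothness of the Birkhoff factorization on the big cell.

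\emph{Step 2 (the form of $\phi_+^{-1}\dot\phi_+$).} Differentiating $\phi=\phi_+\phi_-$ in $t$ and using $\dot\phi=\phi A$ gives
\[
\phi_+^{-1}\dot\phi_+ = \phi_-A\phi_-^{-1}-\dot\phi_-\phi_-^{-1}.
\]
The left side extends holomorphically to $D_r$; the right side extends holomorphically to $E_r\setminus\{\infty\}$ with at worst a simple pole at $\infty$, because $\phi_-=\mathbbm{1}+O(\lambda^{-1})$ near $\infty$, so conjugation by $\phi_-$ preserves the (simple) pole order of $A$, while $\dot\phi_-\phi_-^{-1}=O(\lambda^{-1})$. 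Since the two sides agree on $C_r$, together they define a map $\CPone\to\mathfrak{sl}_2(\bbC)$ holomorphic away from a simple pole at $\infty$; by Liouville it is affine in $\lambda$, say $B_1\lambda+B_0$. Reading off the Laurent expansion at $\infty$ of the right side gives $B_1=\tfrac12\sigma_3$ and $B_0=\xi_0+\tfrac12[\psi_1,\sigma_3]$, where $\psi_1$ is the $\lambda^{-1}$-coefficient of $\phi_-$. Now $\xi_0\perp\sigma_3$ means $\xi_0$ is off-diagonal, and $[\,\cdot\,,\sigma_3]$ takes values in the off-diagonal part of $\mathfrak{su}_2$, so $B_0$ is off-diagonal; and $\phi_+^*=\phi_+$ forces $\phi_+^{-1}\dot\phi_+$ to be traceless and skew-Hermitian for real $\lambda$. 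Hence $\phi_+^{-1}\dot\phi_+=\tfrac12\sigma_3\lambda+B_0$ has exactly the asserted shape $\tfrac12\begin{pmatrix}\mi\lambda & f\\ -f^* & -\mi\lambda\end{pmatrix}$ (with $f$ in fact independent of $\lambda$), and $f$ is smooth in $t$ since $\phi_+$ is.

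\emph{Step 3 (the curve).} By Step 2, $\phi_+(\cdot,\lambda)$ itself solves $\dot\phi_+=\phi_+V$ with $V=\tfrac12\begin{pmatrix}\mi\lambda & f\\ -f^* & -\mi\lambda\end{pmatrix}$, so $F(t,\lambda):=\phi_+(0,\lambda)^{-1}\phi_+(t,\lambda)$ is the normalized ($F(0,\cdot)=\mathbbm{1}$) extended frame of \S1 for the complex curvature function $f(\cdot,0)$; hence $2\,F'|_{\lambda=0}F_0^{-1}$ is an arc-length curve in $\mathfrak{su}_2\cong\bbR^3$ with complex curvature $f(t,0)$. A direct computation from $F=\phi_+(0,\lambda)^{-1}\phi_+(t,\lambda)$ gives
\[
2\,F'F^{-1}\big|_{\lambda=0} = \phi_+(0,0)^{-1}\big(\gamma(t)-\gamma(0)\big)\phi_+(0,0),
\]
and $\phi_+(0,0)\in\mathrm{SU}_2$ by reality, so $\gamma$ differs from $2\,F'F^{-1}|_{\lambda=0}$ by a rotation followed by a translation of $\mathfrak{su}_2\cong\bbR^3$. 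Since arc-length parametrization, curvature and torsion — and hence the complex curvature — are invariant under rigid motions, $\gamma$ is an arc-length curve with complex curvature $f(t,0)$. The whole argument is bookkeeping; the only genuine step is the Liouville argument in Step 2, together with tracking that the $\lambda^0$-coefficient of $\phi_+^{-1}\dot\phi_+$ lands in the off-diagonal part — which is precisely where the normalizations $\xi_1=\tfrac12\sigma_3$ and $\xi_0\perp\sigma_3$ in the definition of a potential are used.
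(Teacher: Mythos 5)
Your proof is correct and follows the route the paper intends: the big-cell claim via the reality Lemma of Section 2, and the form of $\phi_+^{-1}\dot\phi_+$ by exactly the coefficient comparison (holomorphic on $D_r$ versus simple pole at $\infty$, using $\xi_1=\tfrac12\sigma_3$ and $\xi_0\perp\sigma_3$) that the paper summarizes as ``comparing coefficients of series expansions''. Your Step 3, reducing to the normalized frame of Section 1 and absorbing $\phi_+(0,\lambda)$ into a rigid motion, is a welcome explicit treatment of the initial condition $\phi_0\neq\mathbbm{1}$, but it is still the same argument, only written out in full.
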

As a consequence of the last proposition the pair $(\eta,\,\phi_0 )$ uniquely determines a curve. The converse can be shown as in \cite{DorPW}, although the uniqueness fails without further normalizations. A curve is said to be of {\emph{finite type}}, if it arises from a constant potential with finitely many terms. 
We call these {\emph{finite type potentials}}, and denote them by $\xi dt$ with $\xi = \xi(\lambda)$ an element of
\[
	\Lambda_{r,*}^{-g,1} \mathfrak{sl}_2 (\mathbb{C}) = \bigl\{ \xi \in  \Lambda_{r,*}^{-\infty,1} \mathfrak{sl}_2 (\mathbb{C}) \mid \xi (\lambda) =  \sum_{j=-g}^1 \xi_j \lambda^{j}  \bigr\}
\]  
for some $g \in \mathbb{N} \cup \{ 0\}$. 
%
%%%%%%%%%%%%%%%%%%%%%%%%%%%
%
\subsection{Symes map} For a finite type potential the solution of $d\phi = \phi \xi dt,\,\phi_0 = \mathbbm{1}$ is $\phi = \exp \int \xi dt$. Combining this with a subsequent Birkhoff splitting gives a map $\Phi: \xi \mapsto \phi_+$, called the {\emph{Symes map}}  
\[
	\Phi(\xi) = \bigl(\, \exp \int \xi dt \,\bigr)_+\,.
\]
The next result is the analogy of Proposition~2.7 in \cite{BurP:dre}.
\begin{lemma} \label{th:symes}
For $g \in \Lambda_r^-  \mathrm{SL}_2 (\mathbb{C})$ and finite type potential $\xi dt$ we have $\Phi (g\,\xi\, g^{-1} ) = g \# \bigl( \Phi (\xi)\,g^{-1}(\infty) \bigr)$. In particular, if $g \in \Lambda_{r,\mathbbm{1}}^-  \mathrm{SL}_2 (\mathbb{C})$ then $$\Phi (g\,\xi\, g^{-1} ) = g \# \Phi (\xi).$$
\end{lemma}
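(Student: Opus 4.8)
The plan is to reduce the whole statement to a single Birkhoff factorization, that of $\exp\int\xi\,dt$, and then carry the constant matrix $g(\infty)$ through the computation; as in the paper I regard $g\#h$ as defined precisely when $gh$ lies in the big cell, and throughout $t\in\bbR$ is a fixed parameter, so all the maps below are loops in $\lambda$. Since $\xi$ does not depend on $t$ one has $\exp\int (g\,\xi\,g^{-1})\,dt = g\bigl(\exp\int\xi\,dt\bigr)g^{-1}$. By the $\phi_0=\mathbbm{1}$, constant--potential instance of Proposition~\ref{th:curve-dpw} the loop $\exp\int\xi\,dt$ lies in the big cell; write its Birkhoff factorization as $\exp\int\xi\,dt = \Phi(\xi)\,\phi_-$ with $\Phi(\xi)\in\Lambda_{r,*}^+\mathrm{SL}_2(\mathbb{C})$ and $\phi_-\in\Lambda_{r,*\mathbbm{1}}^-\mathrm{SL}_2(\mathbb{C})$. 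Then, unwinding the definition of the Symes map,
\[
	\Phi(g\,\xi\,g^{-1}) = \bigl(g\,\Phi(\xi)\,\phi_-\,g^{-1}\bigr)_+ .
\]

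The only tool I would need is the following elementary observation: if $u$ lies in the big cell, with Birkhoff factorization $u = u_+u_-$, and $v\in\Lambda_r^-\mathrm{SL}_2(\mathbb{C})$, then $uv$ lies in the big cell and $(uv)_+ = u_+\,v(\infty)$. Indeed $uv = u_+\bigl(u_-v\bigr)$, and writing $u_-v = v(\infty)\bigl(v(\infty)^{-1}u_-v\bigr)$ exhibits $uv = \bigl(u_+v(\infty)\bigr)\bigl(v(\infty)^{-1}u_-v\bigr)$ with $u_+v(\infty)\in\Lambda_r^+\mathrm{SL}_2(\mathbb{C})$ and $v(\infty)^{-1}u_-v\in\Lambda_{r,\mathbbm{1}}^-\mathrm{SL}_2(\mathbb{C})$, which by uniqueness of the Birkhoff factorization is the factorization of $uv$. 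In particular right multiplication by elements of $\Lambda_r^-\mathrm{SL}_2(\mathbb{C})$ preserves the big cell, so the three loops $g\,\Phi(\xi)$, $g\,\Phi(\xi)\,g^{-1}(\infty)$ and $g\,\Phi(\xi)\,\phi_-\,g^{-1}$, which differ by right multiplication by elements of $\Lambda_r^-\mathrm{SL}_2(\mathbb{C})$ (note $\phi_-\,g^{-1}\in\Lambda_r^-\mathrm{SL}_2(\mathbb{C})$ and $g^{-1}(\infty)$ is constant), are simultaneously in or out of the big cell; being in it is exactly the hypothesis that both sides of the claimed identity are defined.

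Now write $g\,\Phi(\xi) = w_+\,w_-$ for its Birkhoff factorization, so $w_+ = \bigl(g\,\Phi(\xi)\bigr)_+ = g\#\Phi(\xi)$. Applying the observation with $u = g\,\Phi(\xi)$ and $v = \phi_-\,g^{-1}$, whose value at infinity is $\phi_-(\infty)\,g^{-1}(\infty) = g^{-1}(\infty)$, gives
\[
	\Phi(g\,\xi\,g^{-1}) = \bigl(g\,\Phi(\xi)\,\phi_-\,g^{-1}\bigr)_+ = w_+\,g^{-1}(\infty) = \bigl(g\#\Phi(\xi)\bigr)\,g^{-1}(\infty),
\]
while applying it with $u = g\,\Phi(\xi)$ and the constant $v = g^{-1}(\infty)$ gives $g\#\bigl(\Phi(\xi)\,g^{-1}(\infty)\bigr) = \bigl(g\,\Phi(\xi)\,g^{-1}(\infty)\bigr)_+ = w_+\,g^{-1}(\infty)$. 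These agree, which is the asserted identity; the final claim is then the case $g(\infty)=\mathbbm{1}$. I expect the only point needing genuine care to be exactly this bookkeeping — reading $\Phi$ off its defining formula on the non-normalized constant potential $g\,\xi\,g^{-1}$, absorbing $\phi_-\in\Lambda_{r,*\mathbbm{1}}^-\mathrm{SL}_2(\mathbb{C})$ at no cost, and commuting the single constant $g(\infty)$ past the Birkhoff factorization, which is precisely what produces the correction factor $g^{-1}(\infty)$ — rather than anything deeper.
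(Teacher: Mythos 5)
Your proof is correct and follows essentially the same route as the paper: reduce to $\exp\int(g\xi g^{-1})dt = g(\exp\int\xi\,dt)g^{-1}$, insert the Birkhoff factorization of $\exp\int\xi\,dt$, and absorb normalized negative loops while commuting the constant $g^{-1}(\infty)$ through, which is exactly the paper's chain of $(\cdot)_+$ manipulations packaged into your observation $(uv)_+ = u_+\,v(\infty)$ for $v\in\Lambda_r^-\mathrm{SL}_2(\mathbb{C})$. Your explicit handling of when the relevant loops lie in the big cell is a welcome precision that the paper leaves implicit, but it does not change the argument.
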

\begin{proof}
\begin{align*}
	\Phi (g\,\xi\, g^{-1} ) &=   \bigl(  g\,\exp\int\xi  \,g^{-1} \bigr)_+ 
	=  \bigl(  g\,\exp\int\xi  \,g^{-1}(\infty) \tilde{g} \bigr)_+ \\ &=  \bigl(  g\,\exp\int\xi  \,g^{-1}(\infty) \bigr)_+ \quad \mbox{since }\tilde{g} = g(\infty) g^{-1} \in \Lambda_{r,\mathbbm{1}}^- \mathrm{SL}_2(\bbC),\\
&= \bigl(  g\,\Phi (\xi) (\exp\int\xi )_- \,g^{-1}(\infty) \bigr)_+ \\
	&= \bigl(  g\,\Phi (\xi) \,g^{-1}(\infty) \bigr)_+  \quad \mbox{since } g(\infty) (\exp\int\xi )_- \,g^{-1}(\infty) \in \Lambda_{r,\mathbbm{1}}^- \mathrm{SL}_2(\bbC) \\
	&= g \# \bigl( \Phi (\xi)\,g^{-1}(\infty) \bigr)\,.
\end{align*}
\end{proof}
%
%%%%%%%%%%%%%%%%%%%%%%%%%%%%%%%
%
\subsection{Spectral curve} 

Let $\xi \in \Lambda_{r,*}^{-g,1} \mathfrak{sl}_2 (\mathbb{C})$. Then $\det \xi$ is a finite Laurent series with real coefficients of the form
\begin{equation} \label{eq:finite_series}
	\det \xi = \tfrac{1}{4} \lambda^2 + \sum_{i=0}^{2g} a_i \lambda^{-i} \quad \mbox{ with } a_i \in \mathbb{R}\,.
\end{equation}
Consider $\xi \in \Lambda_{r,*}^{-g,1} \mathfrak{sl}_2 (\mathbb{C})$ whose $2g + 2$ roots of $\det\xi$ are pairwise distinct, and define 
\[
	\Sigma^* = \{ (\nu,\,\lambda) \in \mathbb{C}^2 \mid \det (\nu\mathbbm{1} - \xi(\lambda)) = 0 \}.
\]
Thus $\Sigma^*$ consists of all solutions of 
\begin{equation} \label{eq:nu}
	\nu^2 = -\det \xi (\lambda)\,.
\end{equation}
By construction this gives a degree 2  map $\lambda : \Sigma^* \to \mathbb{C}^\times$, which is branched at the $2g + 2$ distinct roots of $\det \xi$. Compactifying the affine curve defined by \eqref{eq:nu} by adding in two points $\infty^\pm$ over the point $\lambda = \infty$ gives a hyperelliptic Riemann surface $\Sigma$ of genus $g$, called the spectral curve. It has the hyperelliptic involution $\sigma(\nu,\,\lambda) = (-\nu,\,\lambda)$, as well as the anti-holomorphic involution $\rho (\nu,\,\lambda) = (-\bar{\nu},\,\bar{\lambda})$ fixing the set $\lambda \in \mathbb{R}$. An arc-length parameterized curve $\gamma$  in $\mathbb{R}^3$ is called a {\emph{finite type curve}} if it comes from a finite type potential in the sense of Proposition \ref{th:curve-dpw}. Furthermore we then call $\Sigma$ the spectral curve of $\gamma$. The number $g \in \mathbb{N} \cup \{0\}$ is called the {\emph{spectral genus}}. 
\begin{example}
Let us briefly consider the case $g=0$, since all computations are elementary. Then $ \Lambda_{r,*}^{0,1} \mathfrak{sl}_2 (\mathbb{C})$ consists 
of elements of the form
\[
	\xi (\lambda) = \frac{1}{2} \begin{pmatrix} \mi \lambda & u+\mi v\\ -u + \mi v & -\mi \lambda \end{pmatrix} \quad \mbox{ with } u,\,v \in \mathbb{R}\,.
\]
Then $\Phi(\xi) = \exp (t\,\xi(\lambda)) \in \Lambda^+_{r,*} \mathrm{SL}_2 (\mathbb{C})$ for any $r \in (0,\,\infty)$, and the arc-length parmetrized curve is
\[
	\gamma (t) = \begin{pmatrix}
\frac{\mi \sin \left(2 t \sqrt{u^2+v^2}\right)}{2 \sqrt{u^2+v^2}} & -\frac{\mi \sin ^2\left(t \sqrt{u^2+v^2}\right)}{u-\mi v} \\
 \frac{\sin ^2\left(t \sqrt{u^2+v^2}\right)}{\mi u-v} & -\frac{\mi \sin \left(2 t \sqrt{u^2+v^2}\right)}{2 \sqrt{u^2+v^2}}
\end{pmatrix},
\]
which is a circle unless $u=v=0$, in which case it is a line. Thus spectral genus $g=0$ curves are circles.
\end{example}
The spectral curve alone does not uniquely determine the arc-length parameterized curve, since there is a compact degree of freedom arising from the isospctral set, which we discuss next. 

\subsection{Isospectral dressing}

Suppose $a = a(\lambda)$ is a finite Laurent series of the form \eqref{eq:finite_series}. Then the isospectral set is defined as 
\[
	I(a) = \{ \xi \in   \Lambda_{r,*}^{-g,1} \mathfrak{sl}_2 (\mathbb{C}) \mid \det \xi = a \}\,.
\]
The {\emph{isospectral action}} $\Pi: \mathbb{R}^g \times I(a) \to I(a)$ is defined as follows: Let $\vec{x} = (x_0,\,\ldots,\,x_{g-1}) \in \mathbb{R}^g$ and write
\[
	\exp \bigl( \xi \sum_{i=0}^{g-1} \lambda^i x_i \bigr) = \phi_+ (\vec{x}) \,\phi_- (\vec{x})
\]
for the Birkhoff factoization. Define the map $\Pi$ by setting
\[
	\Pi(\vec{x})\,\xi = \phi_- (\vec{x})\, \xi \,\phi_- ^{-1}(\vec{x})\,.
\]
Since $\phi_+ (\vec{x}) \,\phi_- (\vec{x})$ commutes with $\xi$ we have that 
\[
	\Pi(\vec{x})\,\xi = \phi_- (\vec{x})\, \xi \,\phi_- ^{-1}(\vec{x}) = \phi_+^{-1} (\vec{x})\, \xi \,\phi_+(\vec{x})\,.
\]
The methods of \cite{hks1} can be easily modified to show that $\Pi$ defines a commutative transitive action on $I(a)$. Using this, Lemma~\ref{th:symes} and the above example yields a proof of the next
\begin{proposition} 
Every closed finite type curve is in the dressing orbit of a circle.
\end{proposition}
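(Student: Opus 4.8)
The plan is to exhibit the given curve as the dressing image of a circle, using its finite type potential together with the Symes correspondence.

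Let $\gamma$ be a closed finite type curve, so that it arises from a finite type potential $\xi\,dt$ with $\xi\in\Lambda_{r,*}^{-g,1}\mathfrak{sl}_2(\bbC)$; by Proposition~\ref{th:curve-dpw} we may take its extended frame to be the Symes frame $\Phi(\xi)$. First I would dispose of the degenerate case: if the $2g+2$ roots of $\det\xi$ are not pairwise distinct, cancelling the repeated factors lowers the genuine spectral genus below $g$, so there is no loss in assuming the roots distinct and the spectral curve $\Sigma$ smooth of genus $g$.

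Next I would reduce to one representative per spectral curve. The isospectral action has the form $\Pi(\vec{x})\,\xi=\phi_-(\vec{x})\,\xi\,\phi_-^{-1}(\vec{x})$ with $\phi_-(\vec{x})\in\Lambda_{r,*\mathbbm{1}}^-\mathrm{SL}_2(\bbC)$, so the second assertion of Lemma~\ref{th:symes} yields $\Phi\bigl(\Pi(\vec{x})\,\xi\bigr)=\phi_-(\vec{x})\,\#\,\Phi(\xi)$; in other words, moving $\xi$ within its isospectral class $I(\det\xi)$ only dresses the frame $\Phi(\xi)$. Since $\Pi$ acts transitively on $I(\det\xi)$, the curve $\gamma$ lies in the dressing orbit of a circle as soon as some finite type curve with spectral curve $\Sigma$ does, and we are free to replace $\xi$ by a convenient representative.

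The decisive step is then the construction, for each admissible $\Sigma$ of genus $g$, of a genus-zero potential $\xi_0$ — which by the Example is the potential of a circle — and a loop $g_-\in\Lambda_r^-\mathrm{SL}_2(\bbC)$, a product of $g$ simple factors with conjugate-paired poles in $D_r$ lying over the branch points of $\Sigma$, for which $\Phi(\xi)=g_-\,\#\,\bigl(\Phi(\xi_0)\,g_-^{-1}(\infty)\bigr)$. By Lemma~\ref{th:symes} the right-hand side equals $\Phi(g_-\,\xi_0\,g_-^{-1})$, so the content is that a suitably chosen product of simple factors dresses the chosen circle back onto $\gamma$; the base case $g=0$ is the Example. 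I expect this construction to be the main obstacle: one has to arrange the simple factors so that the a priori only meromorphic potential $g_-\,\xi_0\,g_-^{-1}$ reproduces $\gamma$ with the prescribed spectral genus, while keeping the reality condition $\ast$ and the closing conditions $M_0(\varrho)=\pm\mathbbm{1}$ and $M'_\lambda(\varrho)|_{\lambda=0}=0$ intact, and it is at this point that the methods of \cite{hks1} invoked for the transitivity of $\Pi$ must be adapted.
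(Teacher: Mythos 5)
Your first two steps do track the paper's (very terse) argument: the paper likewise only cites the transitivity of the isospectral action, Lemma~\ref{th:symes} and the genus-zero example, and your observation that $\Phi(\Pi(\vec{x})\,\xi)=\phi_-(\vec{x})\,\#\,\Phi(\xi)$ correctly reduces the problem to exhibiting, for each spectral datum $a=\det\xi$, \emph{one} finite type curve in the dressing orbit of a circle. The genuine gap is that this remaining step --- which you yourself flag as ``the main obstacle'' --- is not an incidental technicality but the entire content of the proposition, and the mechanism you propose for it cannot work. A product of simple factors is a rational loop; by Lemma~\ref{th:symes} dressing the circle frame by such a $g_-$ produces the frame $\Phi(g_-\xi_0 g_-^{-1})$, where $g_-\xi_0 g_-^{-1}$ is a \emph{rational} potential with $\det(g_-\xi_0 g_-^{-1})=\det\xi_0$ of spectral genus zero. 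Clearing denominators, the resulting curve then carries a Laurent-polynomial Killing field whose determinant is a square times a genus-zero quadratic; but for a curve of honest spectral genus $g\geq 1$ every polynomial potential has determinant equal to a square times the square-free genus-$g$ datum $a$, and these two factorizations are incompatible. In other words, dressing a circle by finitely many simple factors only ever produces the multi-circletons classified in this very paper (nodal, geometric genus zero spectral data); it never opens up $2g+2$ distinct branch points, no matter where you place the singularities.

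What the step actually requires is a dressing element that is \emph{not} a product of simple factors: choose $r$ so that all roots of $\det\xi$ lie in $D_r$, diagonalize $\xi$ over $E_r$ by a loop $W\in\Lambda_r^-\mathrm{SL}_2(\bbC)$ built from the eigenprojections $\tfrac{1}{2}(\mathbbm{1}\pm\nu^{-1}\xi)$ (single-valued on $E_r$ since all branch points are enclosed, with $W(\infty)=\mathbbm{1}$), do the same for a circle potential $\xi_0$ with diagonalizer $W_0$, and set $g_-=W W_0^{-1}\in\Lambda_{r,\mathbbm{1}}^-\mathrm{SL}_2(\bbC)$. Because $\det\xi$ and $\det\xi_0$ have the same two leading coefficients, the eigenvalues satisfy $\nu-\nu_0\to 0$ at $\lambda=\infty$, so $\exp(t\xi)$ and $\exp\bigl(t\,g_-\xi_0 g_-^{-1}\bigr)$ differ by a right factor in $\Lambda_{r,\mathbbm{1}}^-\mathrm{SL}_2(\bbC)$ and hence have the same Birkhoff-plus part, giving $\Phi(\xi)=g_-\#\,\Phi(\xi_0)$ directly; your isospectral reduction then covers the genericity/normalization issues. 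Two smaller points: your worry about preserving the closing conditions is beside the point, since the closedness of $\gamma$ is a hypothesis and nothing in the argument needs the intermediate dressing to respect periods; and the degenerate-root reduction should be phrased as replacing $\xi$ by a minimal potential of the same curve (dividing out the square factor of $\det\xi$), which is fine in spirit but also deserves a line of justification.
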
 
%

%
%%%%%%%%%%%%%%%%%%%%%%%%%%%%%
%%%%%%%%%%%%%%%%%%%%%%%%%%%%%
%

\section{Simple factors}

We shall determine the simplest non-trivial elements of $\Lambda_{r,*\mathbbm{1}}^-  \mathrm{SL}_2 (\mathbb{C})$: Consider a degree 1 polynomial of the form 
\[
	Q(\lambda) = \mathbbm{1} + \lambda^{-1} P \quad \mbox{with } P \in \mathfrak{sl}_2(\bbC)
\]
Clearly $Q(\infty) = \mathbbm{1}$, and $\det Q(\lambda) = 1 + \lambda^{-2} \det P$. Normalizing the determinant to $1$, define
\[
	g(\lambda) = \frac{1}{\sqrt{1 + \lambda^{-2} |P|^2}} \bigl( \mathbbm{1} + \lambda^{-1} P \bigr)\,.
\]
Away from the roots of $\det Q$ we have $g^* = g$ if and only if $P \in \mathfrak{su}_2$. Thus $\det P >0$ if and only if $P \neq 0$, and then $\lambda = \pm \mi \sqrt{\det P} = \pm \mi |P|$ are the roots of $\det Q$. 
Then $g \in \Lambda_{r,*\mathbbm{1}}^- \mathrm{SL}_2(\bbC)$ for $r > |P|$, and  $g \in \Lambda_{r,*}^+ \mathrm{SL}_2(\bbC)$ for $r  < |P|$. Further, $g$ has eigenvalues
\[
	\mu_-(\lambda)  = \tfrac{\sqrt{\lambda^2 + |P|^2}}{\lambda + \mi |P|} \qquad \mu_+(\lambda) =  \tfrac{\sqrt{\lambda^2 + |P|^2}}{\lambda - \mi |P|}\,.
\]
Thus $\mu_-$ has only one pole, a simple pole at $\lambda = -\mi |P|$, and $\mu_+$ has only one pole, a simple pole at $\lambda = \mi |P|$.
Expand $P = y\sigma_1 + z \sigma_2 + x \sigma _3$. Note that $g$ has $\lambda$-independent eigenlines spanned by 
\[
	L_- = \begin{pmatrix} \frac{x- |P|}{\mi y + z} \\ 1\end{pmatrix}\,, \qquad
	L_+ = \begin{pmatrix} \frac{x+ |P|}{\mi y + z} \\ 1\end{pmatrix} .
\]
Then $\langle L_-,\,L_+ \rangle = \tfrac{2y}{y-\mi z}$. Since we want $g(\lambda) \in \mathrm{SU}_2$ for all $\lambda \in \bbR$, we set $y=0$. Thus $P =  z \sigma_2 + x \sigma _3$ . At $\lambda =  \pm \mi |P|$ the map $\mathbbm{1} + \lambda^{-1} P$ drops rank from 2 to 1, with kernels spanned respectively by 
\begin{equation*} %\label{eq:eigenlines}
	L = \begin{pmatrix} \frac{-z}{|P| + x}\\ 1\end{pmatrix}\,, \qquad 
	L^\perp = \begin{pmatrix} \frac{z}{|P| - x} \\ 1 \end{pmatrix}\,.
\end{equation*}
Let $\pi_L:\bbC^2 \to  \bbC \bbP^1$ be the hermitian projection onto $L \in \bbC \bbP^1$, and $\pi_L^\perp = \mathbbm{1}- \pi_L$. Set $\alpha = \mi |P|$, and consider 
\begin{equation*}
	\psi_{L,a} (\lambda) = (1- \alpha\lambda^{-1})\pi_L +  (1- \bar\alpha\lambda^{-1})\pi_L^\perp 
	= \mathbbm{1} + \alpha \lambda^{-1} (\pi_L^\perp  - \pi_L )
\end{equation*}
Then $\det \psi_{L,\alpha} = 1 - \alpha^2\lambda^{-2}$, and $\psi_{L,a}$ drops from rank 2 to 1 at $\pm\alpha$ with kernels 
\begin{equation*} %\label{eq:kernels}
	L = \mathrm{ker }\,\psi_{L,\alpha} (\alpha) \qquad L^\perp = \mathrm{ker }\,\psi_{L,\alpha} (\bar\alpha)\,.
\end{equation*}
A \emph{simple factor} is a map of the form $ (\det \psi_{L,\alpha})^{-1/2}\,\psi_{L,\alpha}$, and we denote these by 
  \begin{equation*} %\label{eq:simple-factor}
    h_{L,\alpha} = \frac{1}{\sqrt{1-\alpha^2\lambda^{-2}}} \bigl( \mathbbm{1} + \alpha \lambda^{-1} (\pi_L^\perp  - \pi_L ) \bigr)\,.
\end{equation*}
Note that 
  \begin{equation*} %\label{eq:simple-factor-inverse}
    h_{L,\alpha}^{-1} = \frac{1}{\sqrt{1-\alpha^2\lambda^{-2}}} \bigl( \mathbbm{1} - \alpha \lambda^{-1} (\pi_L^\perp  - \pi_L ) \bigr)\,.
\end{equation*}
The points $\pm \alpha$ are the singularities of the simple factor $h_{L,\alpha}$. In analogy to Proposition~4.2 in \cite{TerU} we have
\begin{lemma} \label{th:sf-dressing}
Let $h_{L,\alpha} \in \Lambda_{r,*\mathbbm{1}}^- \mathrm{SL}_2 (\bbC)$ be a simple factor, and $F_\lambda \in \Lambda_{r,*}^+ \mathrm{SL}_2 (\bbC)$. Then
\[
	h_{L,\alpha} \# F_\lambda =  h_{L,\alpha} \,F_\lambda\,h^{-1}_{L',\alpha} \qquad \mbox{where } L' = F_\alpha^{-1} \,L\,.
\]
\end{lemma}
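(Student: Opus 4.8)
The plan is to verify directly that the product $h_{L,\alpha}\,F_\lambda\,h_{L',\alpha}^{-1}$, with $L' = F_\alpha^{-1}L$, lies in $\Lambda_{r,*}^+\mathrm{SL}_2(\bbC)$, and then invoke uniqueness of Birkhoff factorization to conclude it equals $h_{L,\alpha}\#F_\lambda = (h_{L,\alpha}F_\lambda)_+$. Concretely, write $h_{L,\alpha}F_\lambda = \bigl(h_{L,\alpha}F_\lambda h_{L',\alpha}^{-1}\bigr)\,h_{L',\alpha}$. Since $h_{L',\alpha} \in \Lambda_{r,*\mathbbm{1}}^-\mathrm{SL}_2(\bbC)$ (it is again a simple factor with the same singularities $\pm\alpha$, and $r > |P| = |\alpha|$), this is a valid Birkhoff factorization of $h_{L,\alpha}F_\lambda$ as soon as the first factor is shown to be a $\Lambda_{r,*}^+$ loop. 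Uniqueness then gives $(h_{L,\alpha}F_\lambda)_+ = h_{L,\alpha}F_\lambda h_{L',\alpha}^{-1}$, which is the claim.

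So the real content is the holomorphy statement: $G(\lambda) := h_{L,\alpha}(\lambda)\,F_\lambda\,h_{L',\alpha}(\lambda)^{-1}$ extends analytically to $D_r$, i.e. the apparent singularities at $\lambda = \pm\alpha$ (inside $D_r$ since $|\alpha| = |P| < r$) are removable. The determinant factors $\sqrt{1-\alpha^2\lambda^{-2}}$ cancel between $h_{L,\alpha}$ and $h_{L',\alpha}^{-1}$, so $G = Q_{L,\alpha}\,F_\lambda\,Q_{L',\alpha}^{-1}\cdot(\text{scalar }1)$ where I write $Q_{L,\alpha} = \mathbbm{1} + \alpha\lambda^{-1}(\pi_L^\perp - \pi_L)$ and note $Q_{L',\alpha}^{-1} = (1-\alpha^2\lambda^{-2})^{-1}(\mathbbm{1} - \alpha\lambda^{-1}(\pi_{L'}^\perp - \pi_{L'}))$; tracking the scalars, $G = (1-\alpha^2\lambda^{-2})^{-1}\,Q_{L,\alpha}F_\lambda\bigl(\mathbbm{1}-\alpha\lambda^{-1}(\pi_{L'}^\perp-\pi_{L'})\bigr)$, a matrix with entries holomorphic on $D_r$ away from $\pm\alpha$, with at worst simple poles there. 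To see the pole at $\lambda = \alpha$ is removable: $Q_{L,\alpha}(\alpha)$ has rank $1$ with image $L^\perp$ and kernel $L$ — more precisely $Q_{L,\alpha}(\alpha) = 2\pi_L^\perp$ acts as projection onto $L^\perp$ — while $Q_{L',\alpha}(\alpha)$ has kernel $L'$, so $Q_{L',\alpha}^{-1}$ has a pole at $\alpha$ whose residue direction (as a map) has image $L'$. The residue of $G$ at $\lambda=\alpha$ is then proportional to $\pi_L^\perp\,F_\alpha\,(\text{projection onto }L')$-type expression; since $L' = F_\alpha^{-1}L$, we get $F_\alpha L' = L = \ker Q_{L,\alpha}(\alpha)$, so $\pi_L^\perp F_\alpha$ kills the relevant direction and the residue vanishes. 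The pole at $\lambda = -\alpha = \bar\alpha$ is handled by the analogous computation using the kernels $L^\perp$, $(L')^\perp$ at $\bar\alpha$ and the reality $F_{\bar\alpha} = F_\alpha^{-*}$, which gives $F_{\bar\alpha}(L')^\perp = L^\perp$ from $L' = F_\alpha^{-1}L$ (since $F_\alpha^{-1}$ unitary-type relation sends orthogonal complements to orthogonal complements appropriately); hence that residue vanishes too.

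I would organize the write-up as: (i) record that $h_{L',\alpha}$ is a genuine simple factor in $\Lambda_{r,*\mathbbm{1}}^-\mathrm{SL}_2(\bbC)$; (ii) observe the determinant square-roots cancel so $G$ is rational in $\lambda$ with poles only at $\pm\alpha$ inside $D_r$ and is holomorphic on a neighborhood of $\overline{D_r}\setminus\{\pm\alpha\}$ — in particular $G \in \Lambda_{r,*}^+\mathrm{SL}_2(\bbC)$ once the poles are removed, and $G^* = G$ since $h_{L,\alpha}^* = h_{L,\alpha}$, $F_\lambda^* = F_\lambda$, $h_{L',\alpha}^* = h_{L',\alpha}$; (iii) compute the residue at $\lambda = \alpha$ and show it is zero using $L' = F_\alpha^{-1}L$; (iv) do the same at $\lambda = \bar\alpha = -\alpha$; (v) conclude by uniqueness of Birkhoff factorization. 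The step I expect to be the main obstacle is (iii)–(iv): being careful about \emph{which} line is the kernel versus the image at each of $\pm\alpha$, getting the hermitian projections $\pi_L, \pi_L^\perp$ and their behavior under left/right multiplication straight, and checking that the single condition $L' = F_\alpha^{-1}L$ simultaneously kills \emph{both} residues — the second one secretly relying on the reality condition $F^* = F$ to relate the behavior at $\bar\alpha$ back to data at $\alpha$. Everything else is bookkeeping.
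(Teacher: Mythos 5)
Your proposal is correct and follows essentially the same route as the paper: show that $h_{L,\alpha}\,F_\lambda\,h_{L',\alpha}^{-1}$ satisfies the reality condition and extends holomorphically to $D_r$ by checking that the residues at $\lambda=\pm\alpha$ vanish (using $F_\alpha L'=L$), then conclude by uniqueness of the Birkhoff factorization since $h_{L',\alpha}\in\Lambda_{r,*\mathbbm{1}}^-\mathrm{SL}_2(\bbC)$. If anything, your handling of the residue at $\lambda=-\alpha=\bar\alpha$ via the reality condition, giving $F_{\bar\alpha}(L')^\perp=L^\perp$, is more explicit than the paper's proof, which writes $F_\alpha$ at that point and leaves the role of reality implicit.
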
 
\begin{proof}
Clearly $h_{L,\alpha} \,F_\lambda\,h^{-1}_{L',\alpha}$ satisfies the reality condition, and is holomorphic in $D_r$ away from $\lambda = \pm \alpha$ where it has simple poles. The residues there are
\[
	\res_{\lambda = \alpha}  h_{L,\alpha} \,F\,h^{-1}_{L',\alpha} = 2\alpha \,\pi_L^\perp F_\alpha \pi_{L'} = 0
\]
since $\im (F_\alpha \pi_{L'} ) \subseteq L$. Similarly 
\[
	\res_{\lambda = -\alpha}  h_{L,\alpha} \,F\,h^{-1}_{L',\alpha} = -2\alpha \,\pi_L F_\alpha \pi_{L'}^\perp = 0
\]
since $\im (F_\alpha \pi_{L'}^\perp ) \subseteq L^\perp$. Hence $h_{L,\alpha} \,F_\lambda\,h^{-1}_{L',\alpha}$ is analytic in $D_r$, and thus 
$h_{L,\alpha} F_\lambda = (h_{L,\alpha} \,F_\lambda\,h^{-1}_{L',\alpha})\,(h_{L',\alpha})$ is the unique Birkhoff factorization.
\end{proof}
\begin{lemma}
Let $L \in \bbC\bbP^1$ and $M_\lambda (\varrho)$ the monodromy of an extended frame $F_\lambda$ such that at $\alpha \in \bbC \setminus \bbR$ we have
\[
	M^{-1}_\alpha (\varrho) L = L\,.
\]
Then the monodromy of $h_{L,\alpha} \# F_\lambda$ with respect to $\varrho$ is given by $h_{L,\alpha} M_\lambda (\varrho) h_{L,\alpha}^{-1}$.
\end{lemma}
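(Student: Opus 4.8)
The plan is to apply Lemma~\ref{th:sf-dressing} pointwise in $t$ and then use the $\varrho$-periodicity of the monodromy relation for $F_\lambda$. Fix $r>|\alpha|$, so that $h_{L,\alpha}\in\Lambda_{r,*\mathbbm{1}}^-\mathrm{SL}_2(\bbC)$, while for each $t$ the extended frame $F_\lambda(t)$ lies in $\Lambda_{r,*}^+\mathrm{SL}_2(\bbC)$ (it is entire in $\lambda$ and in $\mathrm{SU}_2$ for real $\lambda$). Write $\hat F_\lambda(t) = h_{L,\alpha}\#F_\lambda(t)$. Lemma~\ref{th:sf-dressing}, with $F_\lambda$ replaced by $F_\lambda(t)$, gives
\[
	\hat F_\lambda(t) = h_{L,\alpha}\,F_\lambda(t)\,h_{L'(t),\alpha}^{-1}\,,\qquad\text{where } L'(t) = F_\alpha(t)^{-1}L\,.
\]
Since $F_\alpha(0) = \mathbbm{1}$ we have $L'(0) = L$, hence $\hat F_\lambda(0) = h_{L,\alpha}h_{L,\alpha}^{-1} = \mathbbm{1}$; so $\hat F_\lambda$ is again normalized at $t = 0$, and its monodromy with respect to $\varrho$ is the constant matrix $\hat M_\lambda(\varrho)$ characterized by $\hat F_\lambda(t+\varrho) = \hat M_\lambda(\varrho)\,\hat F_\lambda(t)$.

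Next I would compute $L'(t+\varrho)$. From $F_\lambda(t+\varrho) = M_\lambda(\varrho)\,F_\lambda(t)$ (which holds since $F_\lambda(t+\varrho)$ and $M_\lambda(\varrho)\,F_\lambda(t)$ solve the same frame equation and agree at $t=0$), evaluating at $\lambda = \alpha$ and inverting yields
\[
	L'(t+\varrho) = F_\alpha(t+\varrho)^{-1}L = F_\alpha(t)^{-1}\,M_\alpha(\varrho)^{-1}L = F_\alpha(t)^{-1}L = L'(t)\,,
\]
where the hypothesis $M_\alpha^{-1}(\varrho)L = L$ is used in the third equality. Thus the line $L'(t)$, and hence the simple factor $h_{L'(t),\alpha}$, is $\varrho$-periodic in $t$. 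Feeding this back into the first display,
\[
	\hat F_\lambda(t+\varrho) = h_{L,\alpha}\,M_\lambda(\varrho)\,F_\lambda(t)\,h_{L'(t),\alpha}^{-1} = \bigl(h_{L,\alpha}\,M_\lambda(\varrho)\,h_{L,\alpha}^{-1}\bigr)\,\hat F_\lambda(t)\,,
\]
and since the bracketed matrix is independent of $t$ it must equal $\hat M_\lambda(\varrho)$, which is the asserted formula.

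I expect the only delicate point to be organizational: keeping track of the $t$-dependence of the conjugating factor $h_{L'(t),\alpha}^{-1}$ produced by Lemma~\ref{th:sf-dressing}, and recognizing that the hypothesis $M_\alpha^{-1}(\varrho)L = L$ is exactly the condition forcing $L'(t)$ to be $\varrho$-periodic, so that this factor drops out of the monodromy. One may additionally record that $\hat F_\lambda$ solves $\dot{\hat F}_\lambda = \hat F_\lambda\,V(\hat q)$ for the dressed complex curvature $\hat q$ (dressing preserves the structure of the frame equation), which is what makes the notion of monodromy with respect to $\varrho$ meaningful for $\hat F_\lambda$; but the computation above delivers the stated formula without invoking it.
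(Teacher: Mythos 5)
Your proposal is correct and follows essentially the same route as the paper: apply Lemma~\ref{th:sf-dressing}, use the hypothesis $M_\alpha^{-1}(\varrho)L=L$ to show the conjugating factor $h_{L',\alpha}$ is $\varrho$-periodic, and then insert $h_{L,\alpha}^{-1}h_{L,\alpha}$ to read off the dressed monodromy $h_{L,\alpha}M_\lambda(\varrho)h_{L,\alpha}^{-1}$. The only difference is notational (explicit $t$-dependence versus the paper's $\varrho^*$ shift operator), so nothing further is needed.
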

\begin{proof} The monodromy $M_\lambda (\varrho)$ of $F_\lambda$ with respect to $\varrho$ is defined as $\varrho^* F_\lambda = M_\lambda (\varrho)\,F_\lambda$, where  $\varrho^* F_\lambda (t) = F_\lambda (t + \varrho)$. By Lemma~\ref{th:sf-dressing} we have 
$h_{L,\alpha} \# F_\lambda =  h_{L,\alpha} \,F_\lambda\,h^{-1}_{L',\alpha}$ with $L' = F_\alpha^{-1} \,L$. Then
\[
	\varrho^* h_{L',\alpha} = h_{\varrho^* L',\alpha} = h_{\varrho^* F_\alpha^{-1}L,\alpha} 
	= h_{ F_\alpha^{-1} M_\alpha^{-1}(\varrho) L,\alpha} = h_{ F_\alpha^{-1} L,\alpha} =  h_{L',\alpha}\,. 
\]
Thus $\varrho^* h_{L',\alpha} = h_{L',\alpha}$, and consequently 
\begin{align*}
	\varrho^* h_{L,\alpha} \# F_\lambda &=  h_{L,\alpha} \,\varrho^* F_\lambda\,\varrho^* h^{-1}_{L',\alpha} = h_{L,\alpha} \,\varrho^* F_\lambda\,h^{-1}_{L',\alpha} = h_{L,\alpha} \,M_\lambda (\varrho)\,F_\lambda h^{-1}_{L',\alpha} \\
&= h_{L,\alpha} \,M_\lambda (\varrho)\,h^{-1}_{L,\alpha} \,h_{L,\alpha}F_\lambda h^{-1}_{L',\alpha} = h_{L,\alpha} \,M_\lambda (\varrho)\,h^{-1}_{L,\alpha} \,\,h_{L,\alpha} \# F_\lambda\,.
\end{align*}
\end{proof}
Since the closing conditions for a curve in terms of the monodromy of the extended frame are preserved under conjugation, we have
\begin{corollary}
Suppose $\gamma$ is a $\varrho$-periodic curve with extended frame $F_\lambda$ and monodromy $M_\lambda (\varrho)$. Suppose there exists $\alpha \in \bbC \setminus \bbR$ and a line $L \in \bbC\bbP^1$ such that 
$M^{-1}_\alpha (\varrho) L = L$. Then the dressed curve with extended frame 
$h_{L,\alpha} \# F_\lambda$ is again $\varrho$-periodic.
\end{corollary}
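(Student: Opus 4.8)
The plan is to read off the periodicity of the dressed curve from the monodromy criterion recorded at the end of Section~1, using the preceding lemma to express the new monodromy as a conjugate of $M_\lambda(\varrho)$. The single point that needs attention is that the conjugating loop $h_{L,\alpha}$, although it carries a $\lambda^{-1}$ term and is singular at $\pm\alpha$, is nonetheless holomorphic and invertible at the point $\lambda=0$ where the closing conditions live.

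First I would note that, since $\gamma$ is $\varrho$-periodic, its complex curvature is $\varrho$-periodic, so $F_\lambda$ obeys the monodromy relation $\varrho^*F_\lambda = M_\lambda(\varrho)F_\lambda$. The hypothesis $M^{-1}_\alpha(\varrho)L = L$ is exactly what the preceding lemma requires, so the dressed frame $\tilde F_\lambda := h_{L,\alpha}\#F_\lambda$ satisfies $\varrho^*\tilde F_\lambda = \tilde M_\lambda(\varrho)\,\tilde F_\lambda$ with $\tilde M_\lambda(\varrho) = h_{L,\alpha}\,M_\lambda(\varrho)\,h_{L,\alpha}^{-1}$. Applying Proposition~\ref{th:curve-dpw} to the solution of $d\phi = \phi\,V(q)\,dt$ with initial value $h_{L,\alpha}\in\Lambda^-_{r,*\mathbbm{1}}\mathrm{SL}_2(\bbC)$ (note $V(q)\,dt$ is a potential, and the solution is $\phi = h_{L,\alpha}F_\lambda$, so $\phi_+ = \tilde F_\lambda$), the loop $\tilde F_\lambda$ is an extended frame of the dressed curve $\tilde\gamma$, and the monodromy relation just obtained forces $\tilde F_\lambda^{-1}\dot{\tilde F}_\lambda$ to be $\varrho$-periodic in $t$, so $\tilde\gamma$ has $\varrho$-periodic complex curvature. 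Hence, by the equivalence of Section~1 applied to $\tilde F_\lambda$, it remains only to check $\tilde M_0(\varrho) = \pm\mathbbm{1}$ and $\left.\tilde M'_\lambda(\varrho)\right|_{\lambda=0} = 0$.

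Next I would make the key observation: writing $G(\lambda) := h_{L,\alpha}(\lambda) = (\lambda^2-\alpha^2)^{-1/2}\bigl(\lambda\,\mathbbm{1} + \alpha(\pi_L^\perp-\pi_L)\bigr)$ and using $\alpha\neq 0$ (since $\alpha\in\bbC\setminus\bbR$), the branch points $\pm\alpha$ of $G$ avoid $\lambda=0$, so $G$ is holomorphic and invertible in a neighbourhood of $\lambda=0$, with $G(0) = (-\alpha^2)^{-1/2}\alpha(\pi_L^\perp-\pi_L) \in \mathrm{SU}_2$. From $\tilde M_\lambda(\varrho) = G\,M_\lambda(\varrho)\,G^{-1}$ and $M_0(\varrho) = \pm\mathbbm{1}$ one gets $\tilde M_0(\varrho) = G(0)(\pm\mathbbm{1})G(0)^{-1} = \pm\mathbbm{1}$; differentiating at $\lambda=0$ and using $\left.M'_\lambda(\varrho)\right|_{\lambda=0} = 0$ together with $G\,G^{-1}\equiv\mathbbm{1}$ gives $\left.\tilde M'_\lambda(\varrho)\right|_{\lambda=0} = \pm\left.\bigl(G'G^{-1} + G(G^{-1})'\bigr)\right|_{\lambda=0} = 0$. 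Therefore $\tilde\gamma$ satisfies the closing conditions of Section~1 and is $\varrho$-periodic.

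The only step that is not routine bookkeeping is this last observation, that $h_{L,\alpha}$ is regular and invertible at the evaluation point $\lambda=0$: it is precisely what makes the slogan ``the closing conditions are preserved under conjugation'' legitimate here, even though $h_{L,\alpha}$ has poles away from $0$. Everything else is an immediate combination of Lemma~\ref{th:sf-dressing}, the preceding lemma, Proposition~\ref{th:curve-dpw}, and the monodromy criterion of Section~1.
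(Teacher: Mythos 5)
Your argument is correct and follows the paper's own route: the paper derives the corollary in one line from the preceding lemma plus the remark that the closing conditions $M_0(\varrho)=\pm\mathbbm{1}$ and $\left.M'_\lambda(\varrho)\right|_{\lambda=0}=0$ are preserved under conjugation, which is exactly what you verify. Your additional checks --- that $h_{L,\alpha}$ is holomorphic and invertible at $\lambda=0$ because $\alpha\notin\bbR$, and that the dressed frame has $\varrho$-periodic complex curvature so the Section~1 criterion applies --- simply make explicit what the paper leaves implicit.
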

%
%%%%%%%%%%%%%%%%%%%%%%%%%%%%%
%%%%%%%%%%%%%%%%%%%%%%%%%%%%%
%

\section{Circletons}

Extended frames for circles of radius $1/\kappa$ are given by 
\[
	F_\lambda (t) =  \cos \left(\tfrac{1}{2} t \sqrt{\kappa^2+\lambda^2}\,\right) \mathbbm{1} + \tfrac{\sin \left(\tfrac{1}{2} t \sqrt{\kappa^2+\lambda^2}\right)}{\sqrt{\kappa^2+\lambda^2}} \,V(\kappa) .
\]
Note that 
\[
	F_{\pm\mi\kappa} (t) =  \begin{pmatrix}
 1\mp\frac{t \kappa }{2} & \frac{t \kappa }{2} \\
 -\frac{t \kappa }{2} & 1 \pm \frac{t \kappa }{2}
\end{pmatrix}
\]
is not diagonalizable. A computation reveals that for $\varrho = 2 \pi$ and $M_\lambda (\varrho) = F_\lambda(2\pi)$ we have $M_0 (\varrho)  = -\mathbbm{1}$ and $M'_0  (\varrho)  = 0$, so we get a once-wrapped circle by restricting $t \in [0,\,2\pi]$. To obtain a $\omega$-wrapped circle for $\omega \in \bbN$ we let $t \in [0,\,2\omega\pi]$. The length of the curve determines the number of available resonance points. The eigenvalues of $F_\lambda$ are $\mu_\lambda (t) = \exp[ \,\pm \tfrac{\mi\,t}{2}\,\sqrt{\kappa^2 + \lambda^2}\,]$. The resonance points of a $\omega$-wrapped circle are all possible values of $\lambda$ at which $F_\lambda (2\omega\pi) = \pm \mathbbm{1}$. For the eigenvalues this means 
\[
	\mu_\lambda (2\omega\pi) = \pm 1 \,,	
\] 
excluding $\lambda = \pm \mi \kappa$, or equivalently $\omega \,\sqrt{\kappa^2 + \lambda^2} \in \bbZ^*$. We consider the case in $\kappa = 1$ in more detail. Then if $\omega \,\sqrt{\lambda^2 +1} = k$ for some $k \in \bbZ^*$, then 
\[
	\lambda = \pm \sqrt{ \frac{k^2}{\omega^2} -1}\,.
\]
The resonance points are candidates for the singularities of simple factors, so have to lie off the real line. Hence we require that $0 < k^2 < \omega^2$. This proves
\begin{proposition} Let $\gamma$ be an $\omega$-wrapped circle. Then there are precisely $\omega - 1$ many simple factors which dress $\gamma$ to a closed curve. In particular, an embedded circle cannot be dressed by a simple factor to a closed curve.
\end{proposition}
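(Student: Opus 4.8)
The plan is to reduce the statement to the count of resonance points just carried out. Write $F_\lambda$ for the extended frame of the $\omega$-wrapped unit circle, $\varrho=2\omega\pi$, and $M_\lambda(\varrho)=F_\lambda(2\omega\pi)$ for its monodromy; from the formula above for the circle's extended frame, $M_\lambda(\varrho)$ is, at each $\lambda$, a linear combination of $\mathbbm{1}$ and the constant potential $V(1)(\lambda)$, so it commutes with $V(1)(\lambda)$. Given a simple factor $h_{L,\alpha}$ — necessarily with $\alpha\in\bbC\setminus\bbR$ — Lemma~\ref{th:sf-dressing} gives $h_{L,\alpha}\#F_\lambda=h_{L,\alpha}\,F_\lambda\,h_{L'(t),\alpha}^{-1}$ with $L'(t)=F_\alpha(t)^{-1}L$, and since $F_\alpha(t+\varrho)=M_\alpha(\varrho)F_\alpha(t)$ the line $L'(t)$ is $\varrho$-periodic in $t$ exactly when $M_\alpha(\varrho)^{-1}L=L$. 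When this holds, the corollary concluding the previous section makes $h_{L,\alpha}\#F_\lambda$ again $\varrho$-periodic; when it fails, the factor $h_{\varrho^*L',\alpha}^{-1}h_{L',\alpha}$ that would have to be absorbed into a $t$-independent monodromy of the dressed frame acquires a genuine pole at $\lambda=\alpha$ (because $\varrho^*L'\neq L'$), which is incompatible with $h_{L,\alpha}\#F_\lambda$ taking values in $\Lambda_{r,*}^+\mathrm{SL}_2(\bbC)$; so in that case the dressed curve is not $\varrho$-periodic. Hence $h_{L,\alpha}$ dresses $\gamma$ to a $\varrho$-periodic curve precisely when $L$ is an eigenline of $M_\alpha(\varrho)$.

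Next I split on the location of $\alpha$. For $\alpha\neq\pm\mi$ the matrix $V(1)(\alpha)$ has two distinct eigenvalues, so $M_\alpha(\varrho)$, a linear combination of $\mathbbm{1}$ and the diagonalizable matrix $V(1)(\alpha)$, is diagonalizable and either has a repeated eigenvalue — equivalently $M_\alpha(\varrho)=\pm\mathbbm{1}$, so that every line is an eigenline — or has exactly the two eigenlines $L_\pm$ of $V(1)(\alpha)$; the former happens precisely when $\omega\sqrt{1+\alpha^2}\in\bbZ^*$, i.e.\ when $\pm\alpha$ are resonance points of the $\omega$-wrapped circle. Off resonance (and also at $\alpha=\pm\mi$, where $M_\alpha(\varrho)$ is unipotent but $\neq\mathbbm{1}$) the only admissible lines $L$ are $L_\pm$; for these the residues $\res_{\lambda=\alpha}\bigl(h_{L,\alpha}\,V(1)\,h_{L,\alpha}^{-1}\bigr)\propto\pi_L^\perp\,V(1)(\alpha)\,\pi_L$ and $\res_{\lambda=-\alpha}\bigl(h_{L,\alpha}\,V(1)\,h_{L,\alpha}^{-1}\bigr)\propto\pi_L\,V(1)(-\alpha)\,\pi_L^\perp$ vanish, so $h_{L,\alpha}\,V(1)\,h_{L,\alpha}^{-1}$ is again a constant potential, of spectral genus $0$ since its determinant is still $\tfrac{1}{4}(\lambda^2+1)$; by Lemma~\ref{th:symes} and the $g=0$ Example the dressed curve is then merely a unit circle, congruent to $\gamma$, so no new curve is obtained. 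By contrast, at a resonance point $\alpha$ one has $M_\alpha(\varrho)=\pm\mathbbm{1}$, every $L$ is admissible, and a choice $L\notin\{L_\pm\}$ breaks the residue cancellation and yields a genuinely new closed curve.

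It remains to count the admissible singular pairs, which is the computation preceding the proposition: with $\kappa=1$, solving $\omega\sqrt{1+\alpha^2}=k$ for $k\in\bbZ^*$ gives $\alpha=\pm\sqrt{k^2/\omega^2-1}$, which lies off $\bbR$ exactly for $1\le|k|\le\omega-1$; as $k$ and $-k$ give the same unordered pair $\{\alpha,-\alpha\}$ — purely imaginary, so that $\bar\alpha=-\alpha$ as the form of $h_{L,\alpha}$ requires — there are precisely $\omega-1$ such pairs, hence $\omega-1$ simple factors dressing $\gamma$ to a closed curve (the surviving choice of eigenline $L$ being a separate, compact modulus, which will parametrize the circletons sitting over a given singularity). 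For $\omega=1$ the range $1\le|k|\le\omega-1$ is empty, so the embedded circle admits no such dressing. The point I expect to need the most care is isolating the off-resonance case: one must check by a direct computation with the explicit $h_{L,\alpha}$ — using $\bar\alpha=-\alpha$ and the reality condition to pair the two residues — that the residues of $h_{L,\alpha}\,V(1)\,h_{L,\alpha}^{-1}$ at $\pm\alpha$ vanish exactly for the eigenlines of $V(1)(\pm\alpha)$, and then invoke the invariance of the determinant under conjugation together with the genus-$0$ classification to conclude that those dressings merely reproduce a circle, so that the count is exactly $\omega-1$ and not larger.
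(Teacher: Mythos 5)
Your core count is exactly the paper's proof: the paper's argument consists of nothing more than solving $\omega\sqrt{1+\lambda^2}=k$, $k\in\bbZ^\times$, demanding that the singularity of a simple factor be non-real (it is $\pm\mi|P|$, purely imaginary), and observing that this leaves $k=1,\dots,\omega-1$, i.e.\ $\omega-1$ admissible singularity pairs $\{\alpha,-\alpha\}$, with the eigenline $L$ treated as a separate modulus fixed later up to isometry. You go beyond the paper in two useful ways: you make the sufficiency explicit (closure at a resonance point via the monodromy-conjugation corollary), and you explain why off-resonance parameters do not enlarge the count -- there the only monodromy-invariant lines are the eigenlines of $V(1)(\alpha)$, for which the residues of $h_{L,\alpha}V(1)h_{L,\alpha}^{-1}$ at $\pm\alpha$ cancel, so by Lemma~\ref{th:symes} and the $g=0$ example the dressing merely conjugates the constant potential and reproduces a congruent circle. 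The paper glosses over this with the phrase ``resonance points are candidates,'' so your addition is genuinely clarifying, and your residue computation (using $\bar\alpha=-\alpha$ and the reality condition to pass from $L$ invariant under $V(1)(\alpha)$ to $L^\perp$ invariant under $V(1)(-\alpha)$) is correct. The one step you should not regard as proved is the necessity claim in your first paragraph: from $M_\alpha(\varrho)^{-1}L\neq L$ you infer non-$\varrho$-periodicity of the dressed curve by saying a pole at $\lambda=\alpha$ ``cannot be absorbed,'' but closure of the dressed curve is a condition only at $\lambda=0$ (namely $\hat M_0(\varrho)=\pm\mathbbm{1}$ and vanishing of the $\lambda$-derivative there, for the now $t$-dependent quantity $\hat F_\lambda(t+\varrho)\hat F_\lambda(t)^{-1}$), and the failure of $\varrho^*L'=L'$ does not by itself contradict membership of $h_{L,\alpha}\#F_\lambda$ in $\Lambda^+_{r,*}\mathrm{SL}_2(\bbC)$; a genuine proof would have to compute $\hat\gamma(t+\varrho)-\hat\gamma(t)$ directly. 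Since the paper itself silently assumes this same necessity, this is a sketched step rather than a defect relative to the paper's own argument, but it is the place where your write-up promises more rigour than it delivers.
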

\begin{definition} A single-circleton is a closed curve obtained by dressing a multiply wrapped circle of radius 1 by one simple factor.
\end{definition} 
\begin{lemma}\label{th:circleton}
Up to isometry any single-circleton can be obtained by dressing a $\omega$-wrapped circle by a simple factor $h_{L,\alpha}$ with $L = [1:0]$, so of the form 
\[
	h_{L,\alpha} = \begin{pmatrix} \sqrt{\frac{\lambda -\alpha}{\lambda - \bar{\alpha}}} & 0 \\
	0 & \sqrt{\frac{\lambda - \bar{\alpha}}{\lambda -\alpha }}
	\end{pmatrix}\
\]
with purely imaginary $\alpha \in (0,\,\mi)$ given by 
\[
	\alpha = \sqrt{ \tfrac{k^2}{\omega^2} -1} \qquad \mbox{ with } \quad k \in \bbN,\, 1 \leq k < \omega\,.
\]
\end{lemma}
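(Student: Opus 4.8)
The plan is to show two things: that the line $L\in\bbC\bbP^1$ appearing in a dressing-to-a-closed-curve simple factor $h_{L,\alpha}$ may be normalized to $[1:0]$ after applying an ambient isometry, and that the singularity $\alpha$ is forced to take the stated form by the resonance-point analysis of the preceding proposition. First I would recall that, by the Corollary at the end of Section~4, if $h_{L,\alpha}\#F_\lambda$ is closed then $L$ is a $\lambda$-independent eigenline of $M_\alpha^{-1}(\varrho)$, where $F_\lambda$ is the extended frame of the $\omega$-wrapped circle and $M_\lambda(\varrho)=F_\lambda(2\omega\pi)$. From the explicit formula $F_\lambda(t)=\cos(\tfrac12 t\sqrt{\kappa^2+\lambda^2})\,\mathbbm1+\tfrac{\sin(\tfrac12 t\sqrt{\kappa^2+\lambda^2})}{\sqrt{\kappa^2+\lambda^2}}V(\kappa)$ with $\kappa=1$, the monodromy $M_\alpha(\varrho)$ is a fixed element of $\mathrm{SU}_2$ (evaluated at a non-real $\alpha$, it is a fixed element of $\mathrm{SL}_2(\bbC)$ with two distinct eigenlines, since $\alpha\neq\pm\mi$ is not a branch point). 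So $L$ is one of these two eigenlines; by the symmetry $\sigma_3$-conjugation or replacing $\alpha$ by $\bar\alpha$ the two choices are interchanged, so it suffices to treat one.

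Next I would use the isometry freedom. An isometry of $\bbR^3\cong\mathfrak{su}_2$ acts on extended frames by $F_\lambda\mapsto g F_\lambda g^{-1}$ with $g\in\mathrm{SU}_2$ constant (rotations) together with the point reflections; conjugation by a constant $g\in\mathrm{SU}_2$ sends $h_{L,\alpha}\#F_\lambda$ to $h_{gL,\alpha}\#(gF_\lambda g^{-1})$, and $gF_\lambda g^{-1}$ is again the extended frame of an $\omega$-wrapped circle (of the same radius). Choosing $g$ to rotate the eigenline $L$ of $M_\alpha^{-1}(\varrho)$ onto $[1:0]$ is possible precisely because $M_\alpha(\varrho)$ commutes with $V(1)$, so its eigenlines are the eigenlines of $V(1)$, which one computes explicitly; a single constant $\mathrm{SU}_2$ rotation then simultaneously diagonalizes $V(1)$ and hence $M_\lambda(\varrho)$ for all $\lambda$, carrying $L$ to $[1:0]$. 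With $L=[1:0]$ one has $\pi_L=\mathrm{diag}[1,0]$, $\pi_L^\perp=\mathrm{diag}[0,1]$, so $\pi_L^\perp-\pi_L=-\sigma_3/\mi$ and $h_{L,\alpha}=(1-\alpha^2\lambda^{-2})^{-1/2}(\mathbbm1-\alpha\lambda^{-1}\sigma_3/\mi\cdot\mathbbm1)$ collapses to the stated diagonal matrix $\mathrm{diag}[\sqrt{(\lambda-\alpha)/(\lambda-\bar\alpha)},\sqrt{(\lambda-\bar\alpha)/(\lambda-\alpha)}]$ after simplifying $(1-\alpha^2\lambda^{-2})=\lambda^{-2}(\lambda-\alpha)(\lambda+\alpha)$ and using $\alpha=-\bar\alpha$ for purely imaginary $\alpha$.

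Finally, the value of $\alpha$: by the previous proposition the only $\alpha\in\bbC\setminus\bbR$ at which $M_\alpha(\varrho)$ has a common eigenline with the required pole structure — equivalently the resonance points of the $\omega$-wrapped circle — are $\alpha=\pm\sqrt{k^2/\omega^2-1}$ for $k\in\bbZ^\times$ with $0<k^2<\omega^2$; these are purely imaginary, and replacing $\alpha$ by $-\alpha=\bar\alpha$ (equivalently swapping the role of $L$ and $L^\perp$, i.e. an isometry) lets us take $\alpha\in(0,\mi)$, i.e.\ $\alpha=\sqrt{k^2/\omega^2-1}$ with $1\le k<\omega$, $k\in\bbN$. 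The main obstacle I anticipate is the bookkeeping in the normalization step — verifying carefully that conjugating the extended frame by a constant $\mathrm{SU}_2$ element is genuinely realized by an ambient isometry of the curve $\gamma$ and that it simultaneously diagonalizes the whole family $M_\lambda(\varrho)$, so that the eigenline $L$ really can be sent to $[1:0]$ without disturbing the circle; once that is in place, the remaining identifications are the elementary simplifications indicated above.
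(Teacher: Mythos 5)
Your normalization of $\alpha$ and the final algebraic simplification to the stated diagonal matrix are fine, but the first step of your argument --- forcing $L$ to be one of ``two distinct eigenlines'' of $M_\alpha^{-1}(\varrho)$ and then identifying these with eigenlines of $V(1)$ --- is wrong, and it is exactly the step on which the classification hinges. The admissible singularities $\alpha$ are the resonance points $\alpha=\sqrt{k^2/\omega^2-1}$, and there $\tfrac12\,(2\omega\pi)\sqrt{1+\alpha^2}=k\pi$, so $M_\alpha(\varrho)=F_\alpha(2\omega\pi)=(-1)^k\mathbbm{1}$: the monodromy at every relevant $\alpha$ is a multiple of the identity, every line in $\bbC\bbP^1$ is fixed by it, and $L$ is not constrained to two candidates, let alone to eigenlines of $V$ at $\lambda=\alpha$. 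Indeed, if $L$ \emph{were} an eigenline of $V$ at $\lambda=\alpha$, then $F_\alpha^{-1}L=L$ for all $t$, the factor $h_{L',\alpha}$ in Lemma~\ref{th:sf-dressing} would be $t$-independent, and the dressed frame would be the constant conjugate $h_{L,\alpha}F_\lambda h_{L,\alpha}^{-1}$, i.e.\ the circle again up to isometry --- your restriction would leave no nontrivial circletons at all. Two further claims in that paragraph are also unsound: a single constant rotation cannot simultaneously diagonalize $M_\lambda(\varrho)$ for all $\lambda$ (the matrices $V$ at different $\lambda$ do not commute), and you use the Corollary of Section~4 in its converse direction, which is not established in the paper.

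The paper's proof avoids all of this and needs no analysis of the monodromy: since $\mathrm{SU}_2$ acts transitively on $\bbC\bbP^1$ and $h_{UL,\alpha}=U\,h_{L,\alpha}\,U^{-1}$ for constant $U\in\mathrm{SU}_2$, dressing by $h_{L,\alpha}$ and by $h_{UL,\alpha}$ yields the same curve up to isometry, so one may take $L=[1:0]$ without loss of generality; the normalization acts on the simple factor rather than on the frame of the circle, which also removes the bookkeeping obstacle you flagged about conjugating the extended frame by a constant unitary. Your remaining steps --- only $k^2$ enters, so $k\geq 1$; replacing $\alpha$ by $\bar\alpha$ gives an isometric curve, so $\alpha\in(0,\mi)$; and the computation that for $L=[1:0]$ and purely imaginary $\alpha$ the simple factor is the stated diagonal matrix --- agree with the paper and are correct.
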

\begin{proof} Now $\mathrm{SU}_2$ acts transitively on $\bbC\bbP^1$ and $h_{UL,\alpha} = U h_{L,\alpha} U^{-1}$ for any $U \in \mathrm{SU}_2$. Dressing by  $h_{L,\alpha}$ and  $h_{UL,\alpha}$ gives the same curve up to isometry, so we may choose without loss of generality the line  $L = [1:0]$. Since $k \in \bbZ^*$ appears only squared, we can assume $k> 0$. Hence $k \geq 1$. Dressing by $ h_{L,\alpha}$ and $ h_{L,\bar\alpha}$  gives the same curve up to isometry, so it is no restriction to take $\alpha \in (0,\,\mi)$.  
\end{proof}
\begin{figure}[t] 
%\centering
  	\includegraphics[width=1.85cm]{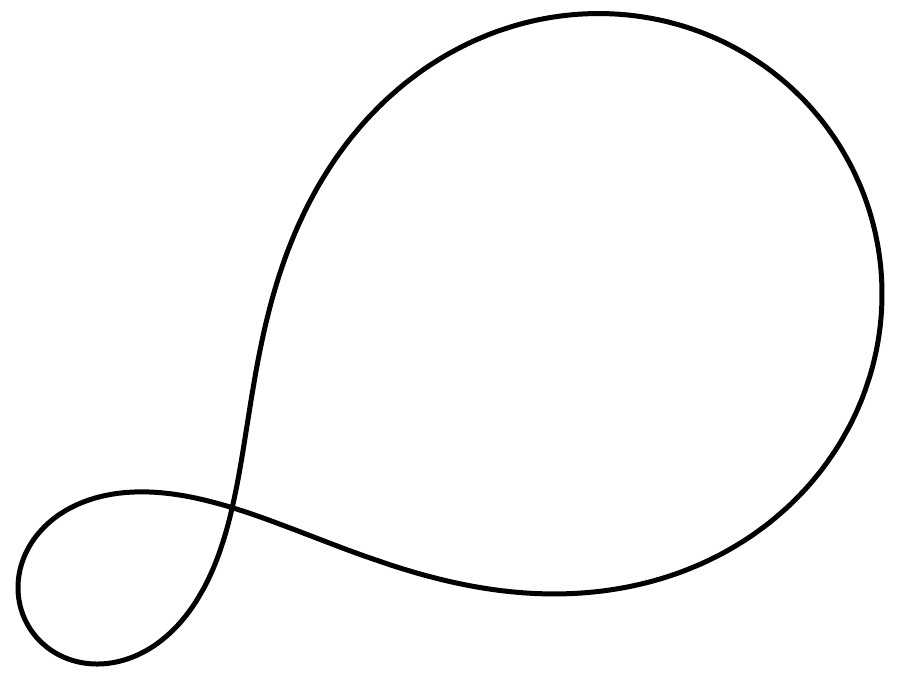}\\
  	\includegraphics[width=1.85cm]{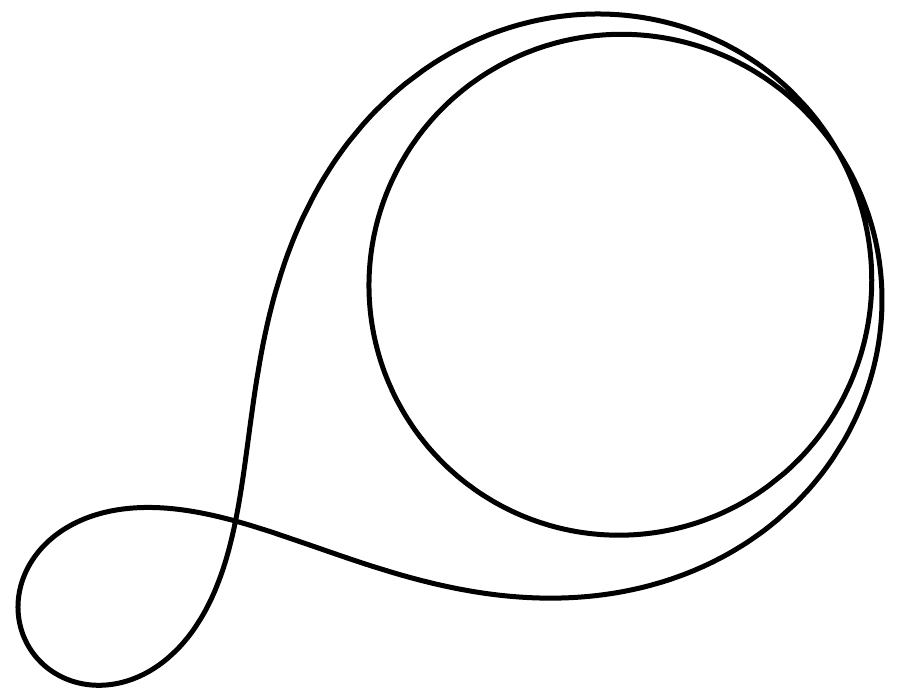}
  	\includegraphics[width=1.85cm]{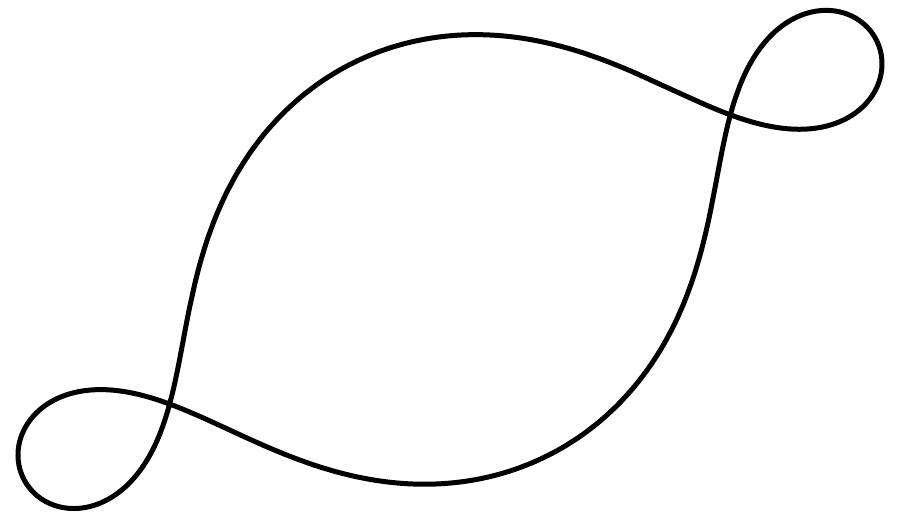}\\
	\includegraphics[width=1.85cm]{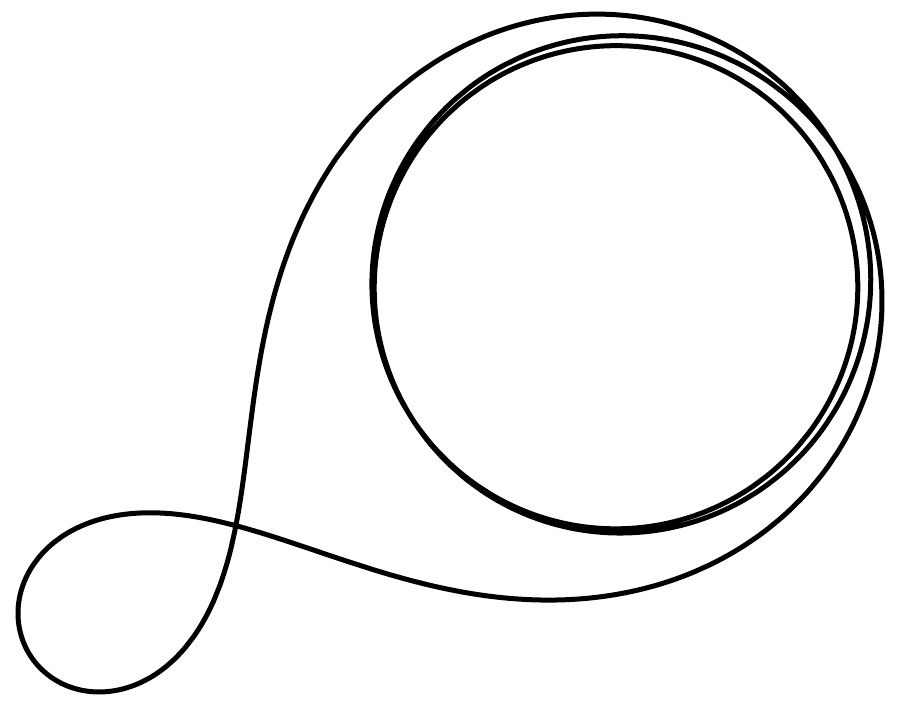}
  	\includegraphics[width=1.85cm]{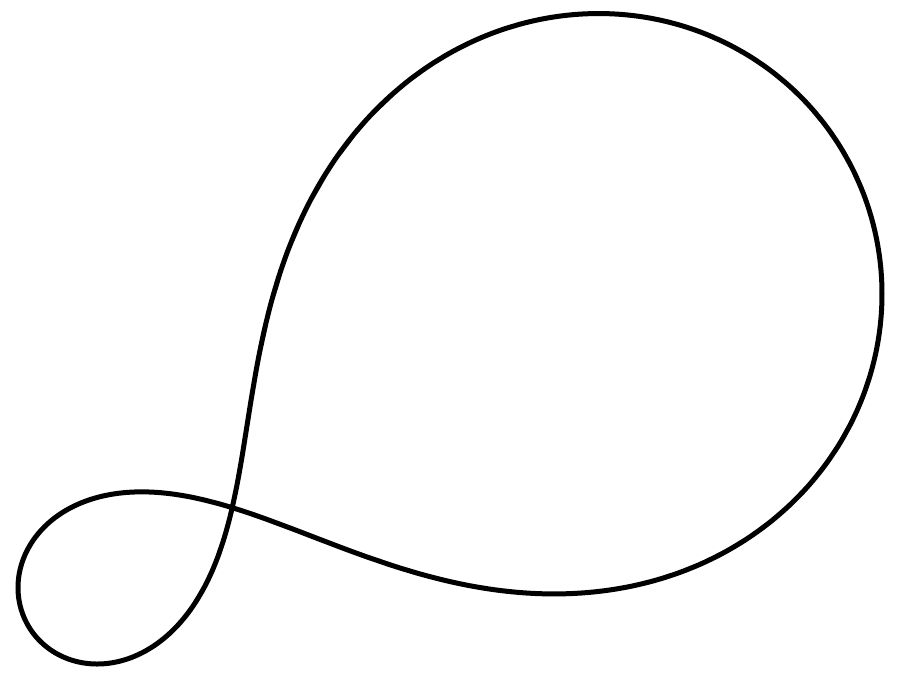}
  	\includegraphics[width=1.85cm]{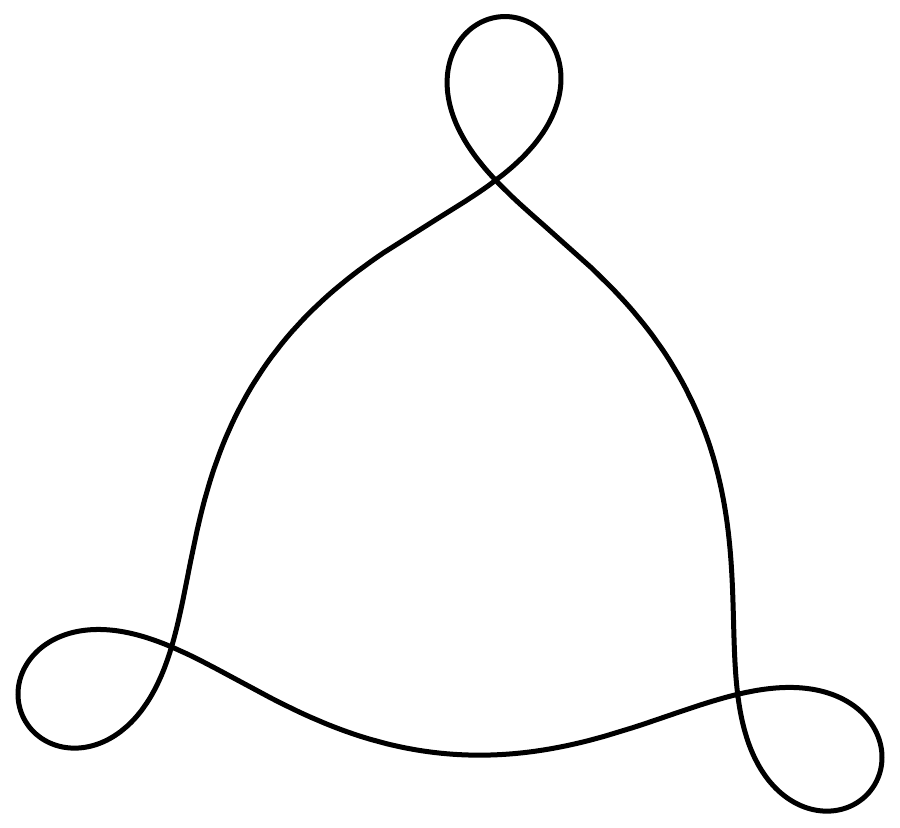}\\
	\includegraphics[width=1.85cm]{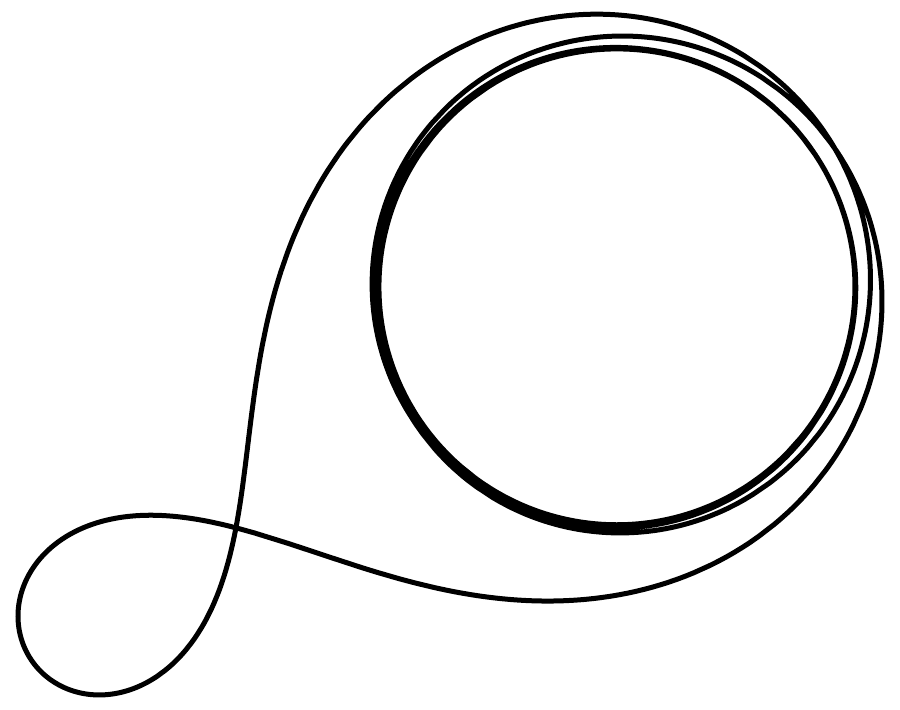}
  	\includegraphics[width=1.85cm]{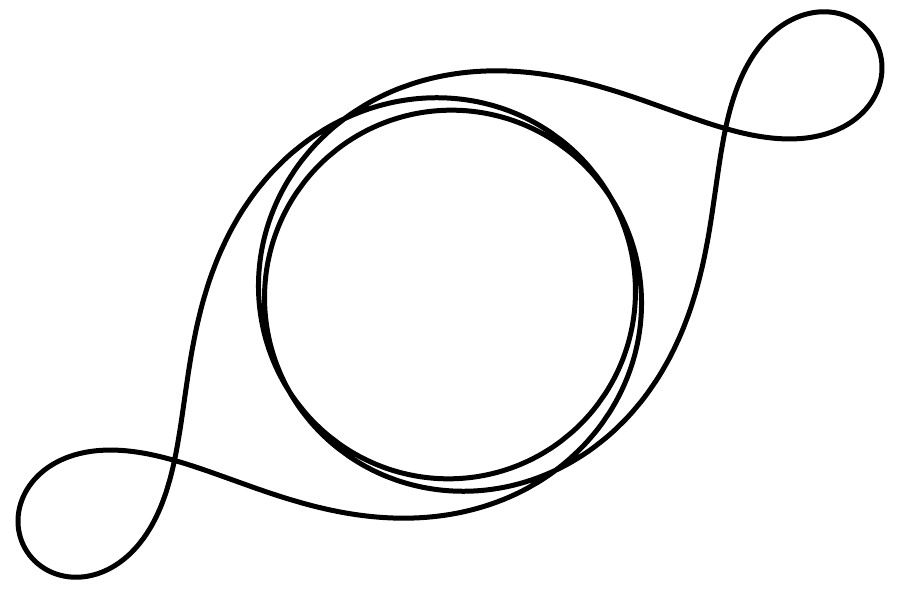}
  	\includegraphics[width=1.85cm]{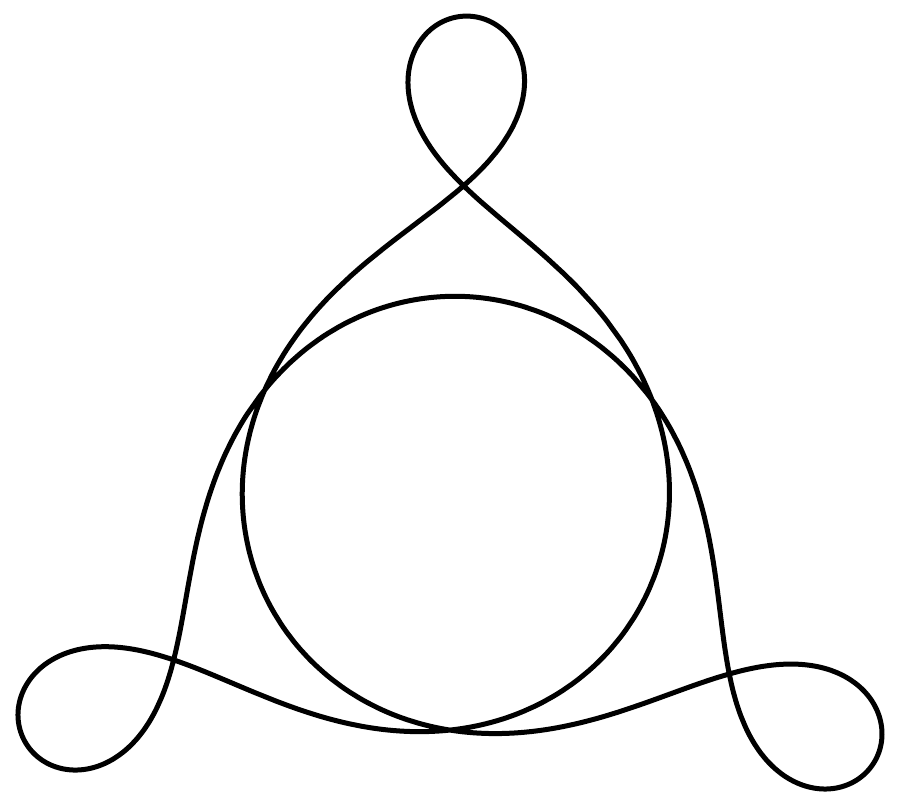}
	\includegraphics[width=1.85cm]{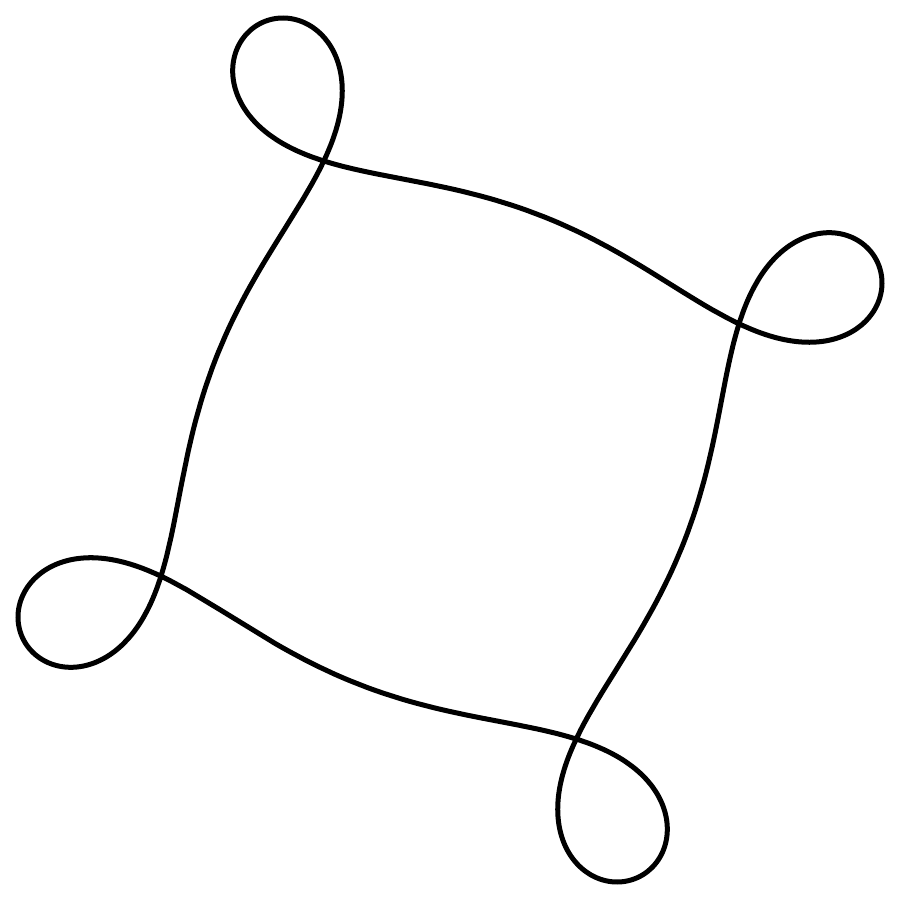}\\
	\includegraphics[width=1.85cm]{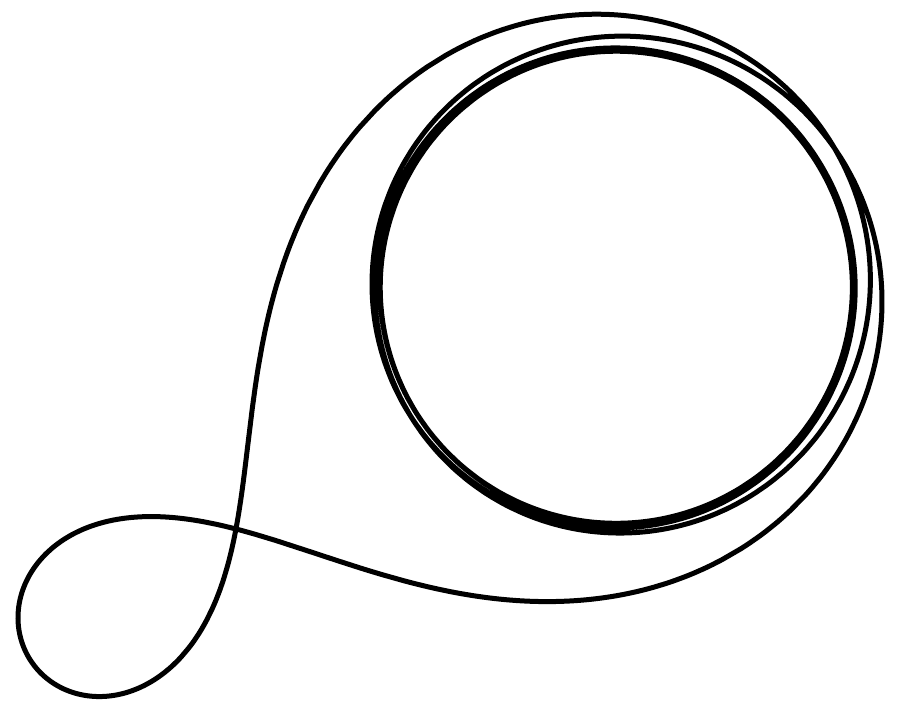}
  	\includegraphics[width=1.85cm]{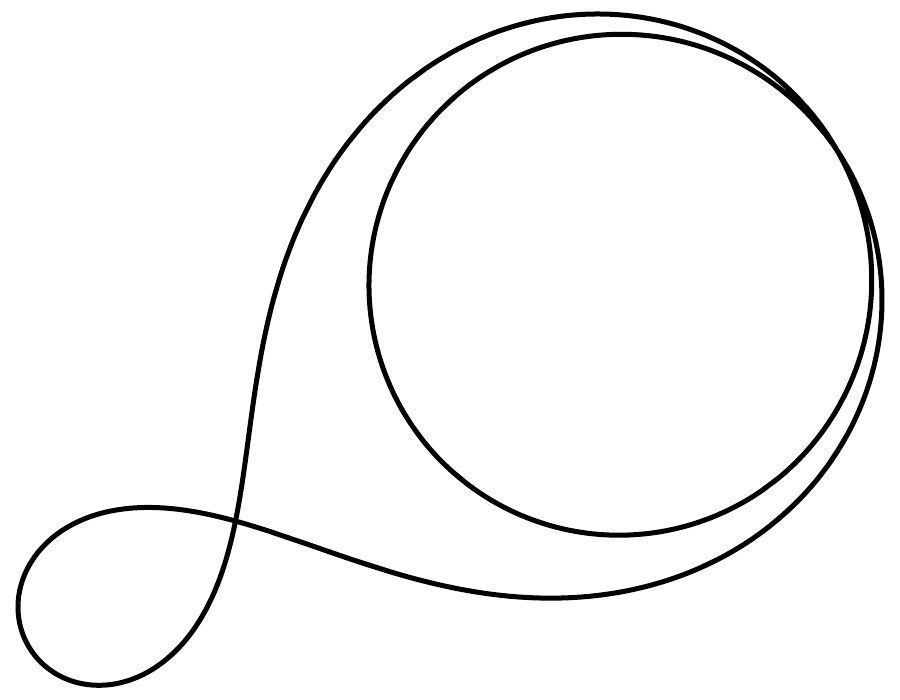}
  	\includegraphics[width=1.85cm]{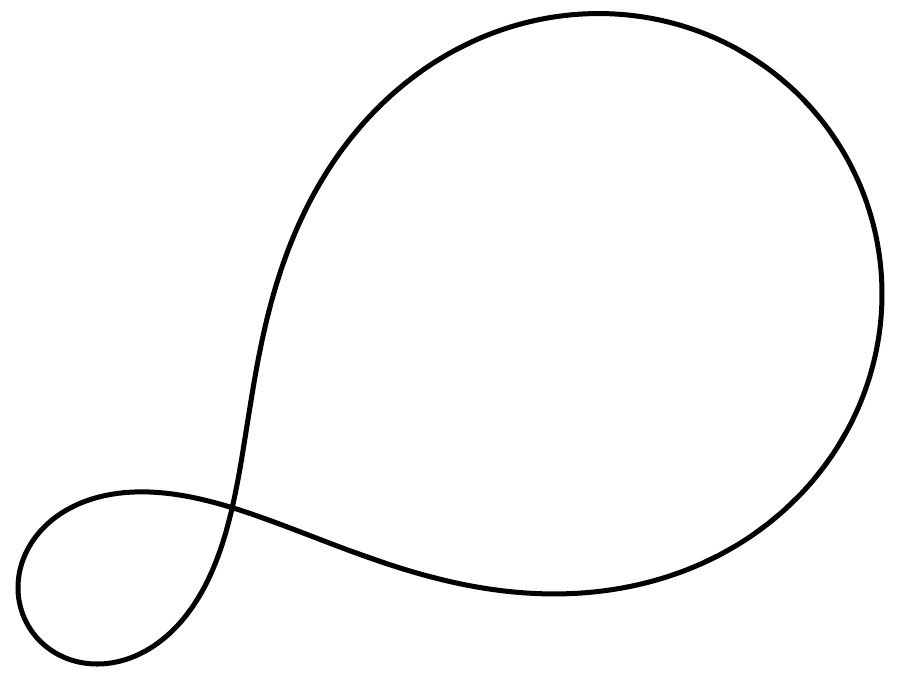}
	\includegraphics[width=1.85cm]{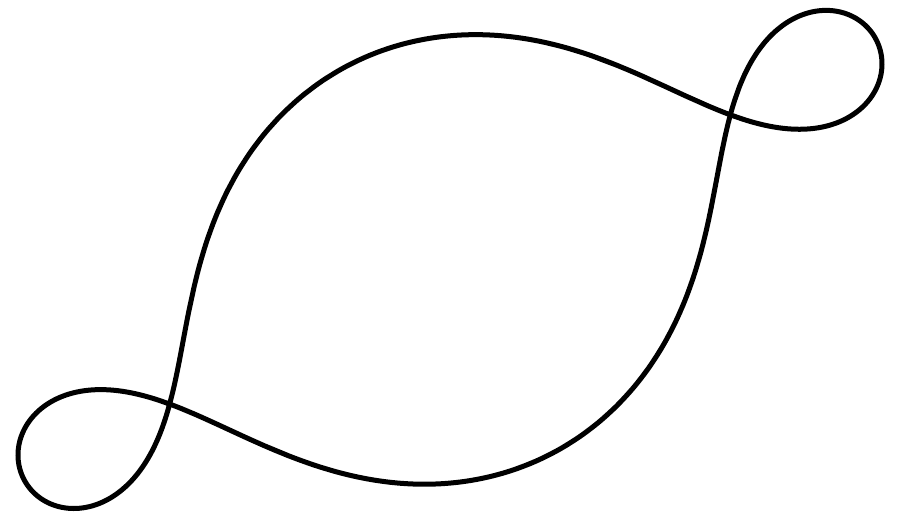}
	\includegraphics[width=1.85cm]{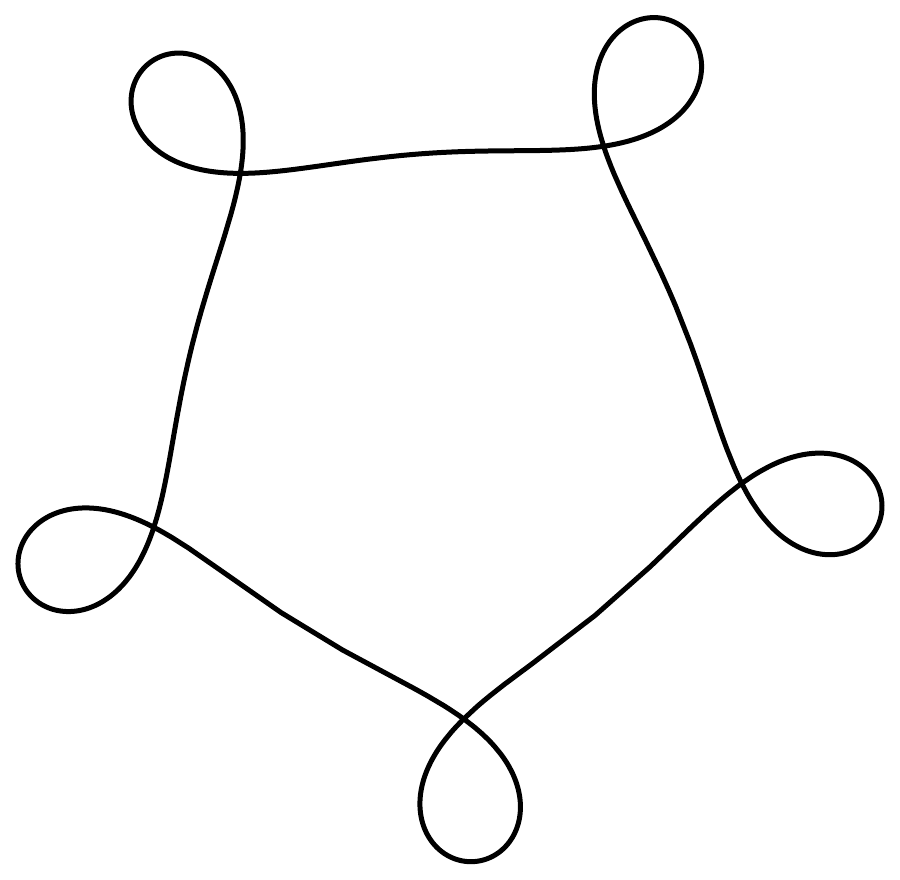}\\
	\includegraphics[width=1.85cm]{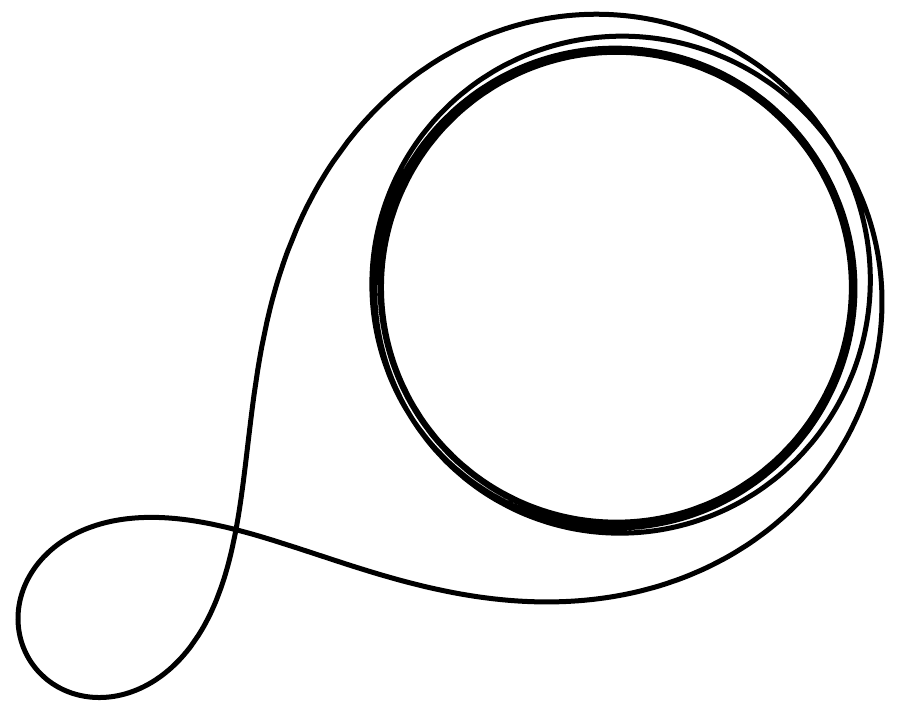}
  	\includegraphics[width=1.85cm]{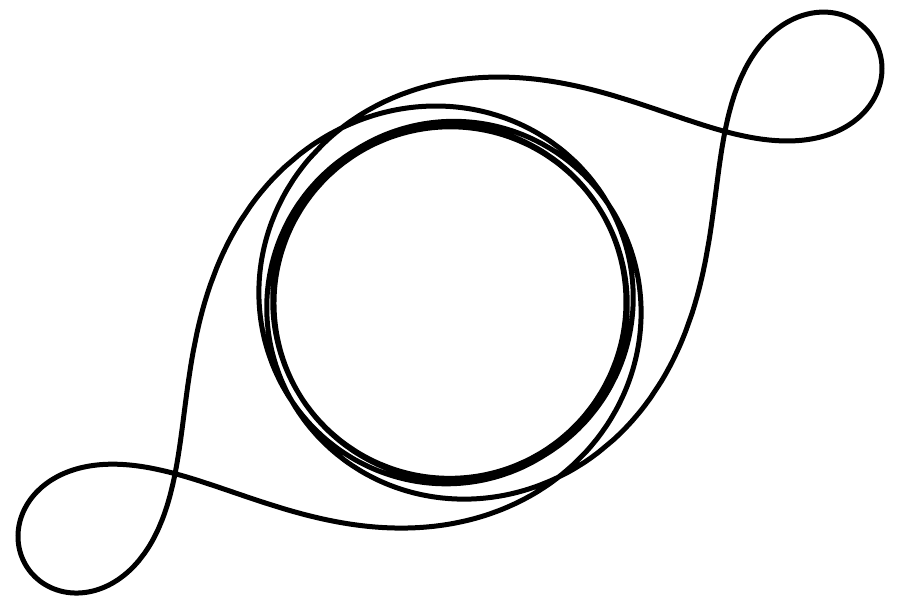}
  	\includegraphics[width=1.85cm]{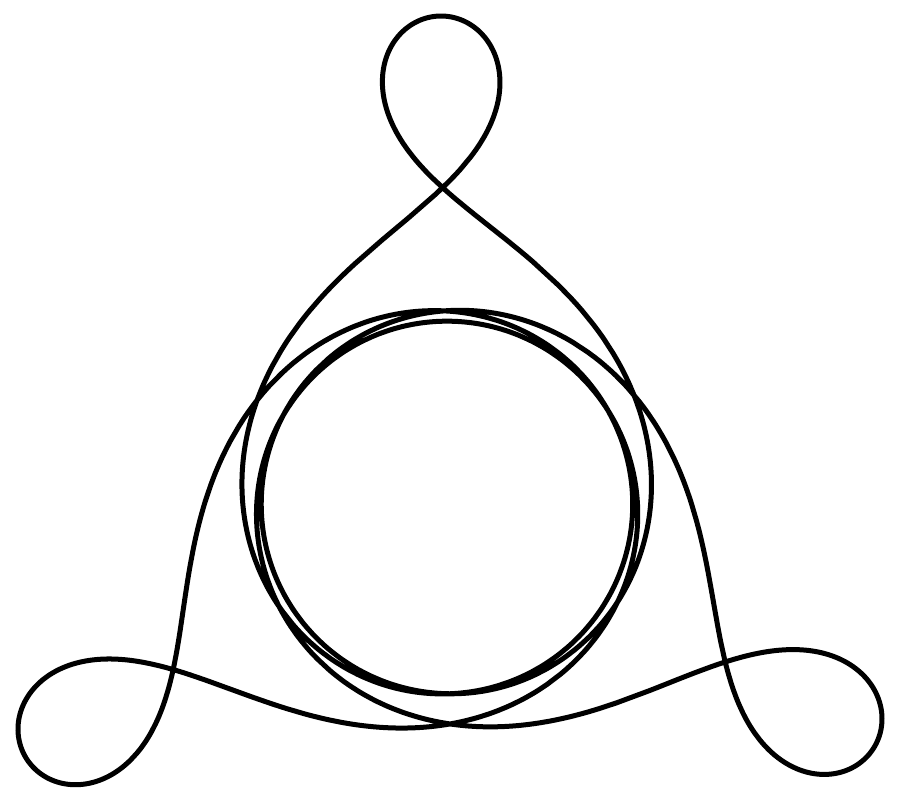}
	\includegraphics[width=1.85cm]{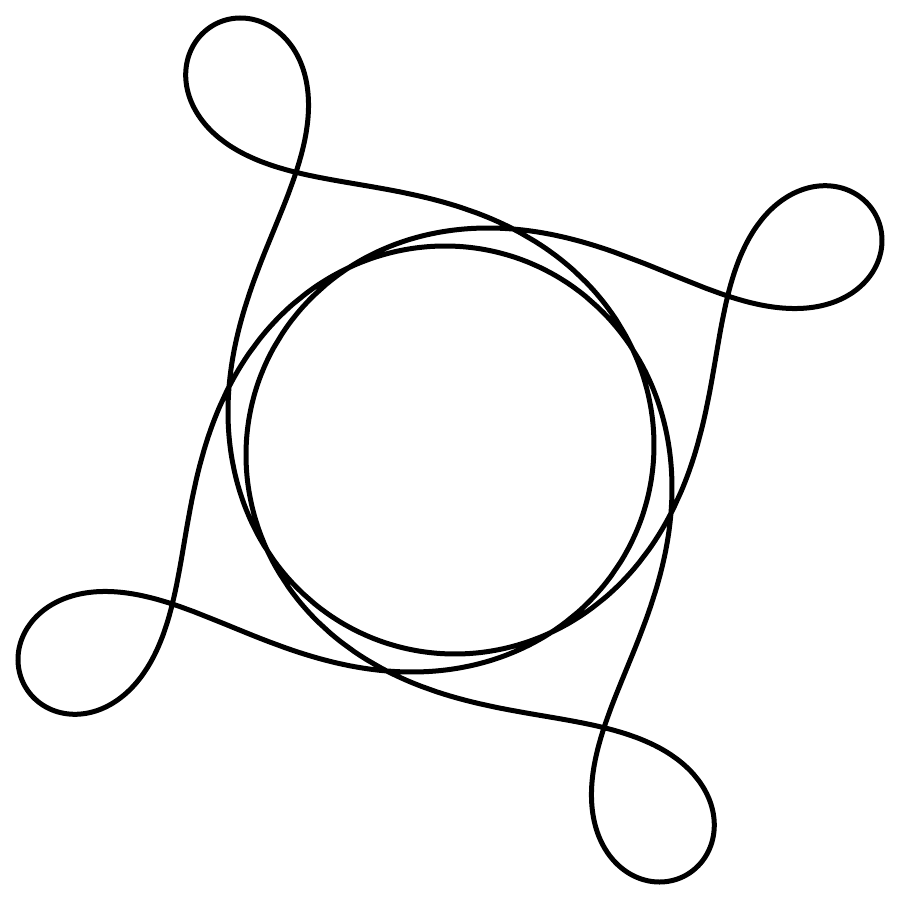}
	\includegraphics[width=1.85cm]{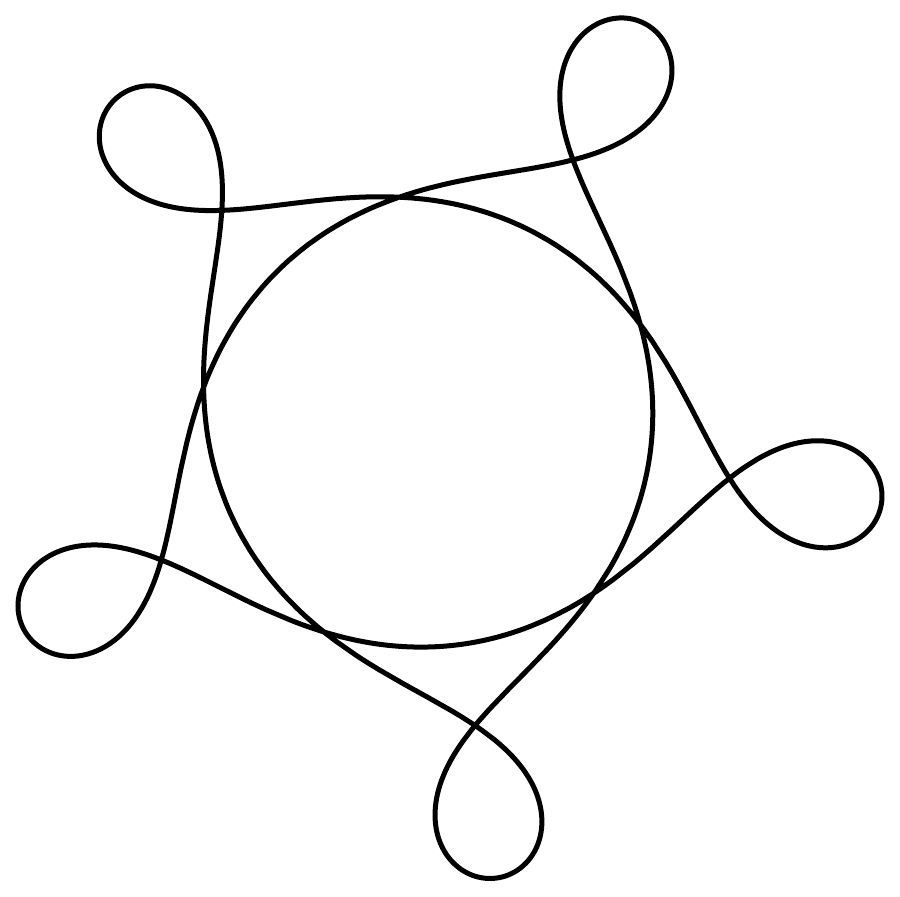}
	\includegraphics[width=1.85cm]{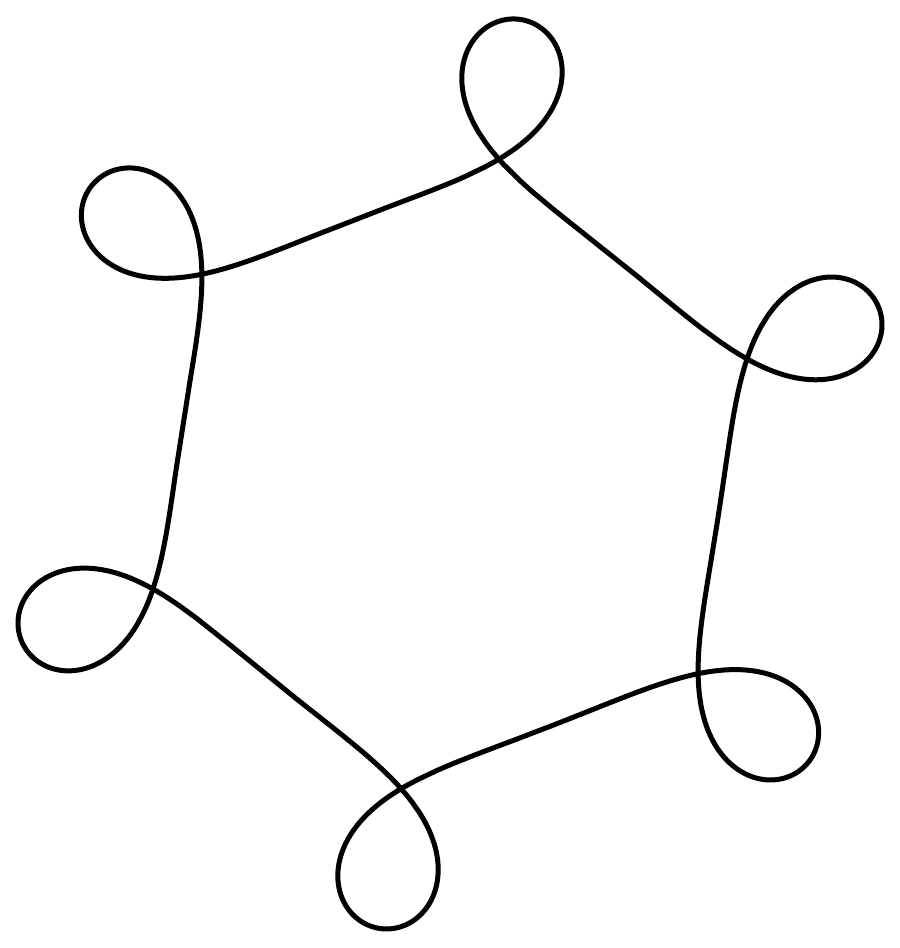}\\
	\includegraphics[width=1.85cm]{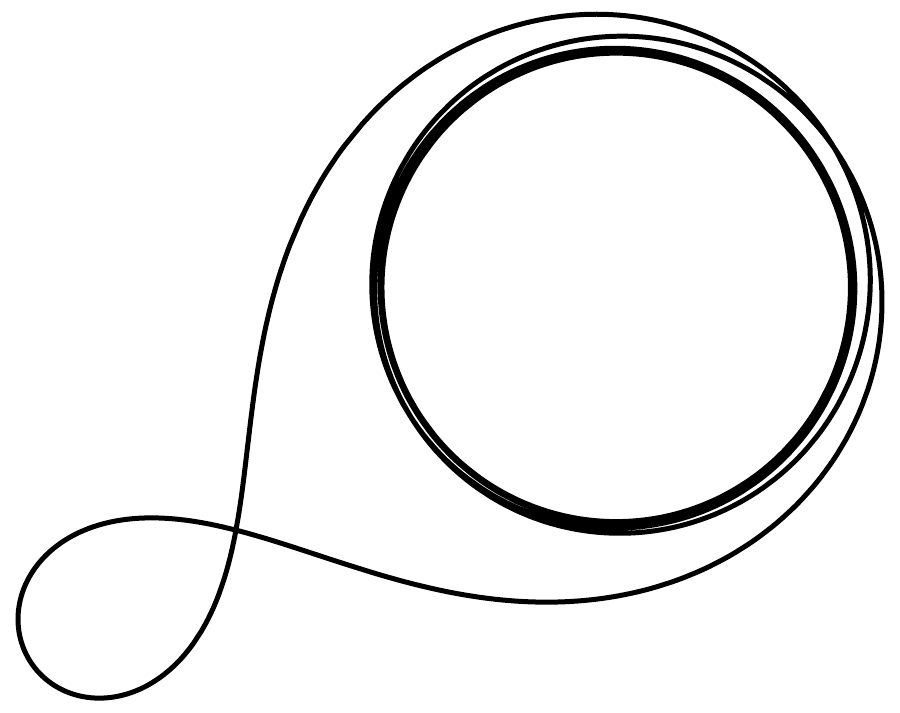}
  	\includegraphics[width=1.85cm]{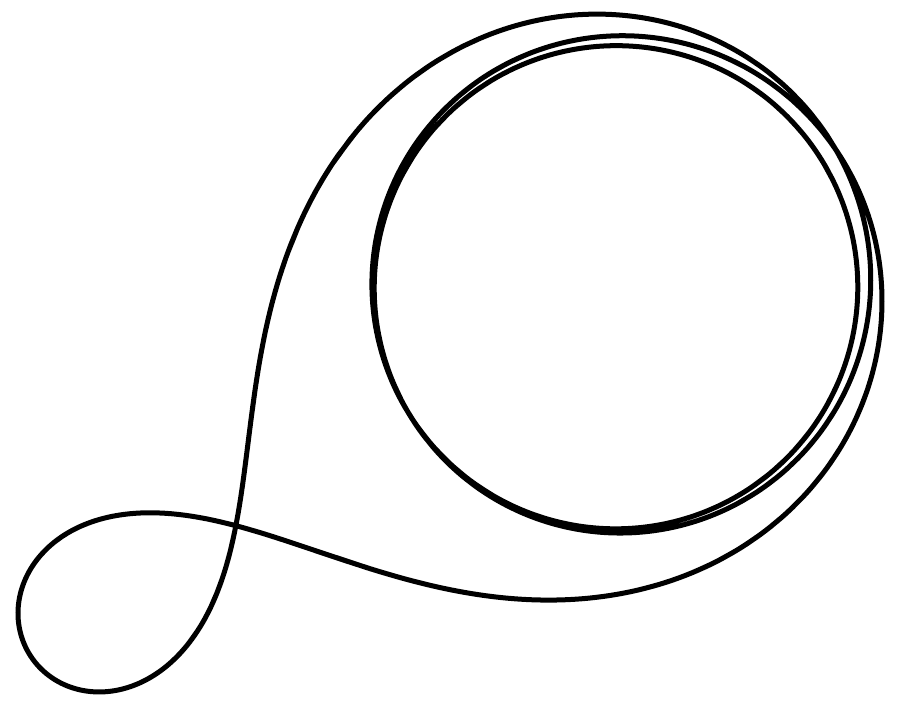}
  	\includegraphics[width=1.85cm]{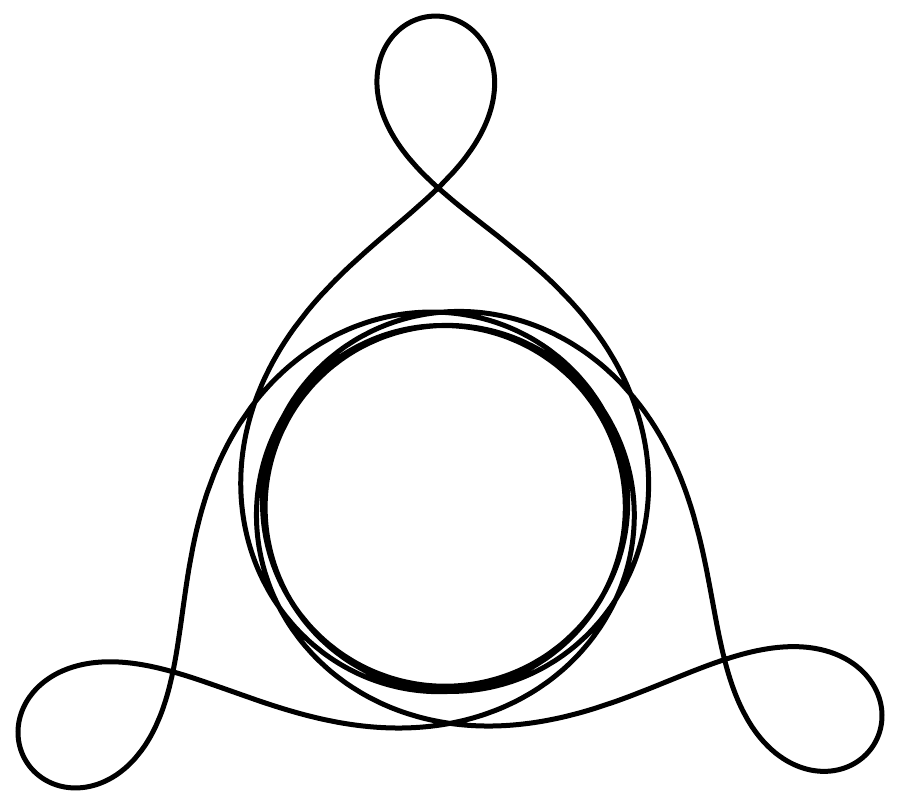}
	\includegraphics[width=1.85cm]{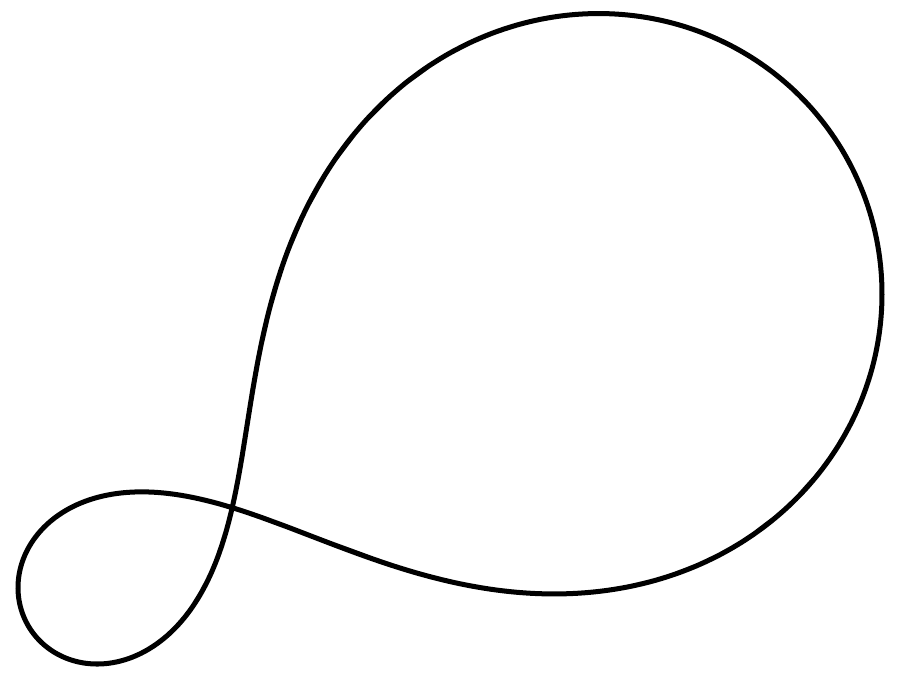}
	\includegraphics[width=1.85cm]{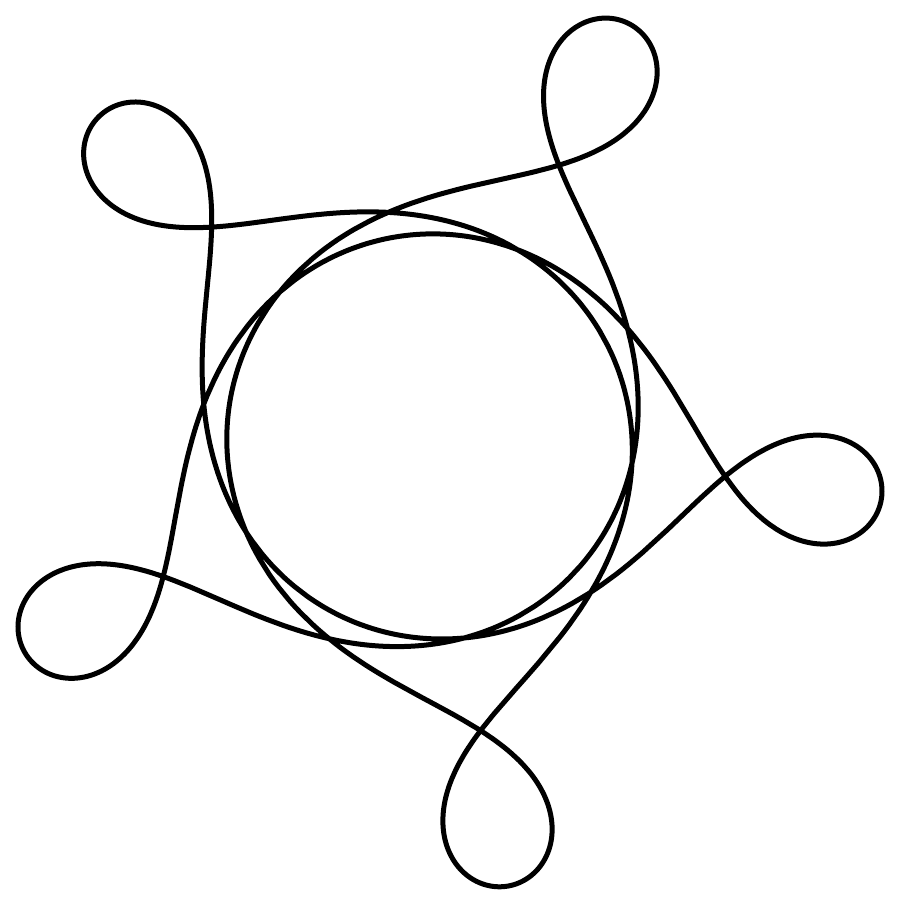}
	\includegraphics[width=1.85cm]{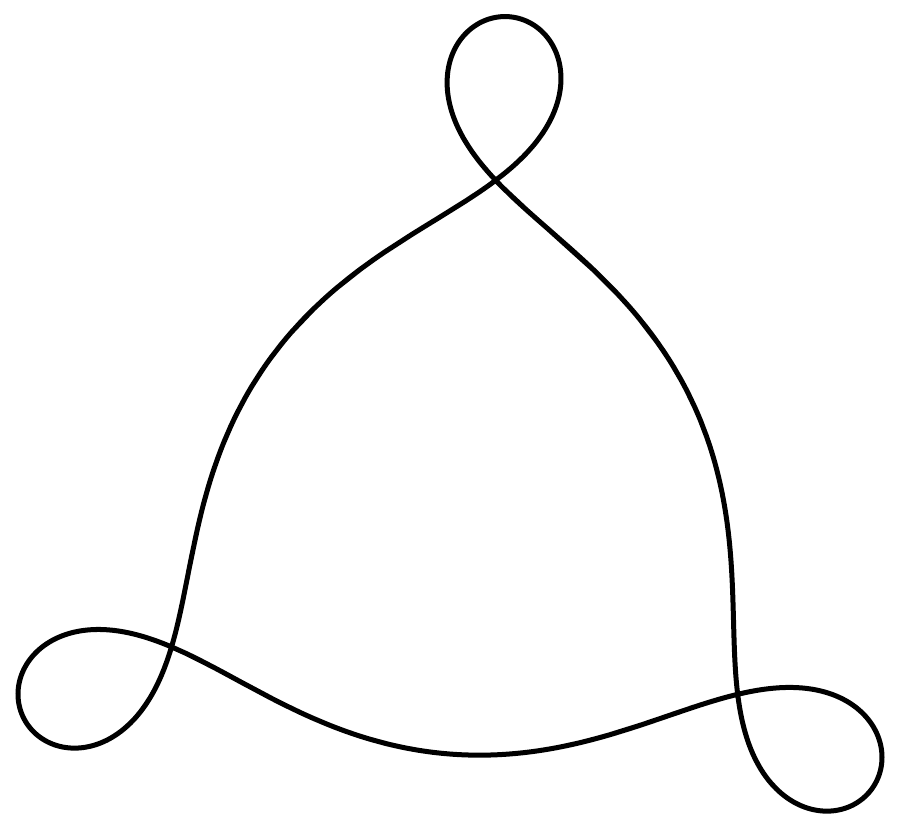}
	\includegraphics[width=1.85cm]{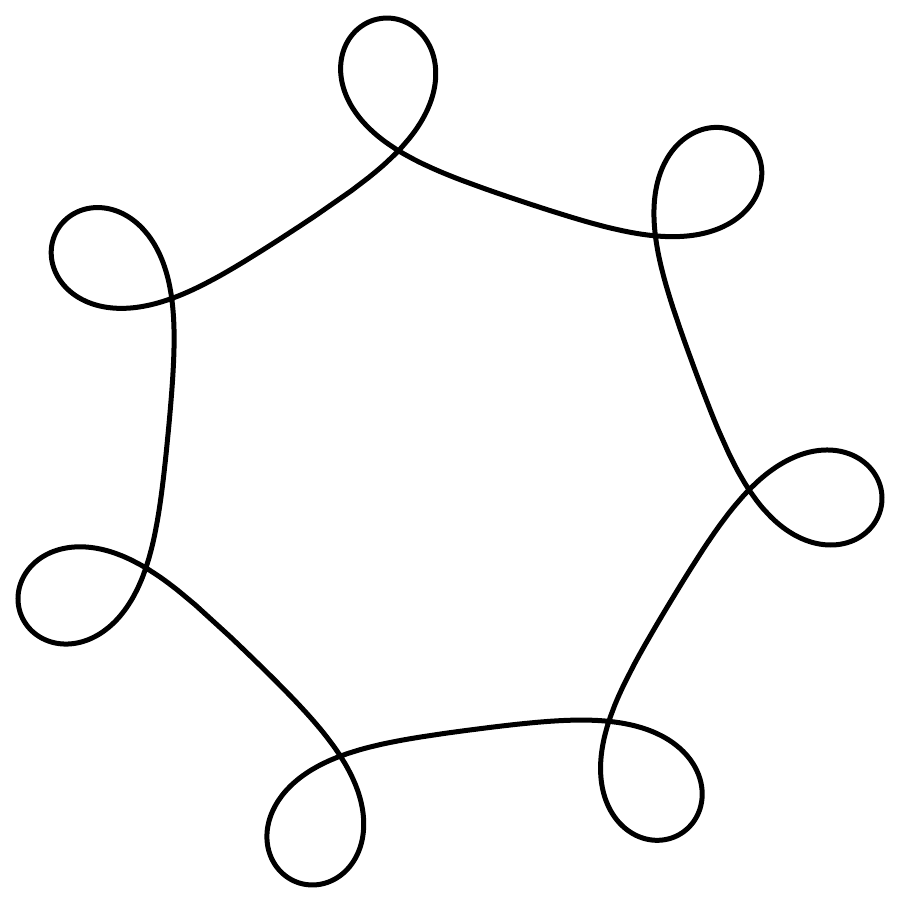}\\
	\includegraphics[width=1.85cm]{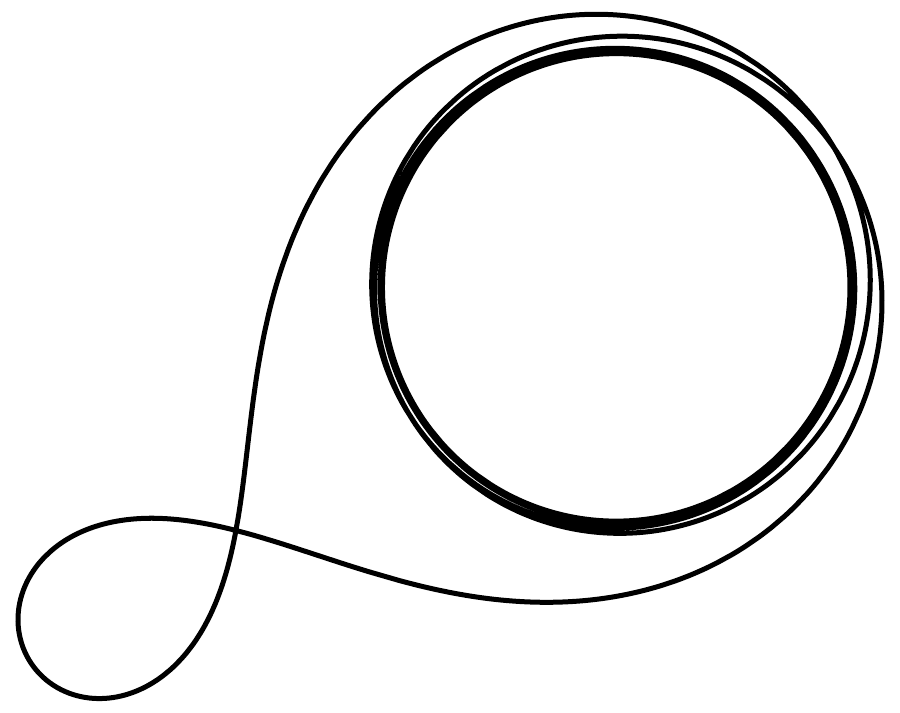}
  	\includegraphics[width=1.85cm]{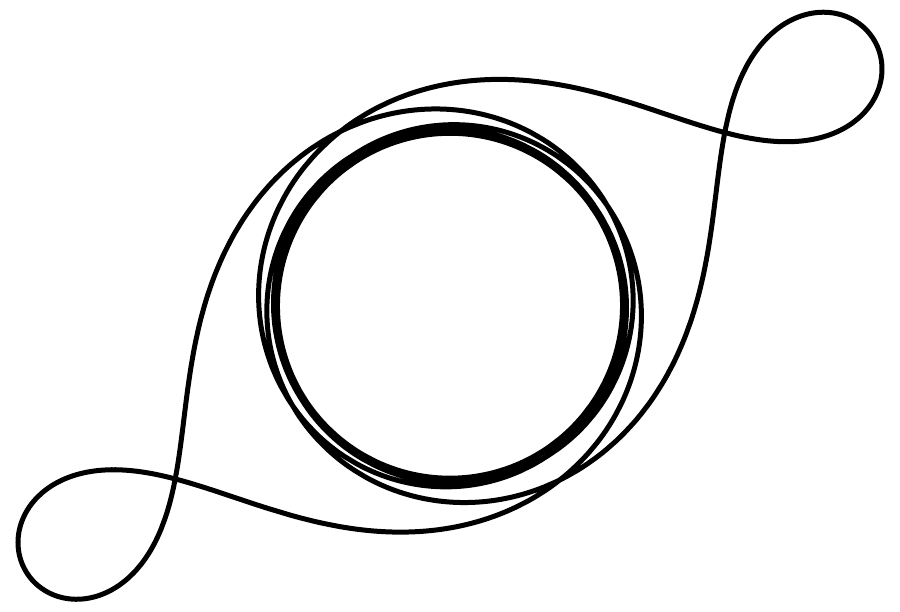}
  	\includegraphics[width=1.85cm]{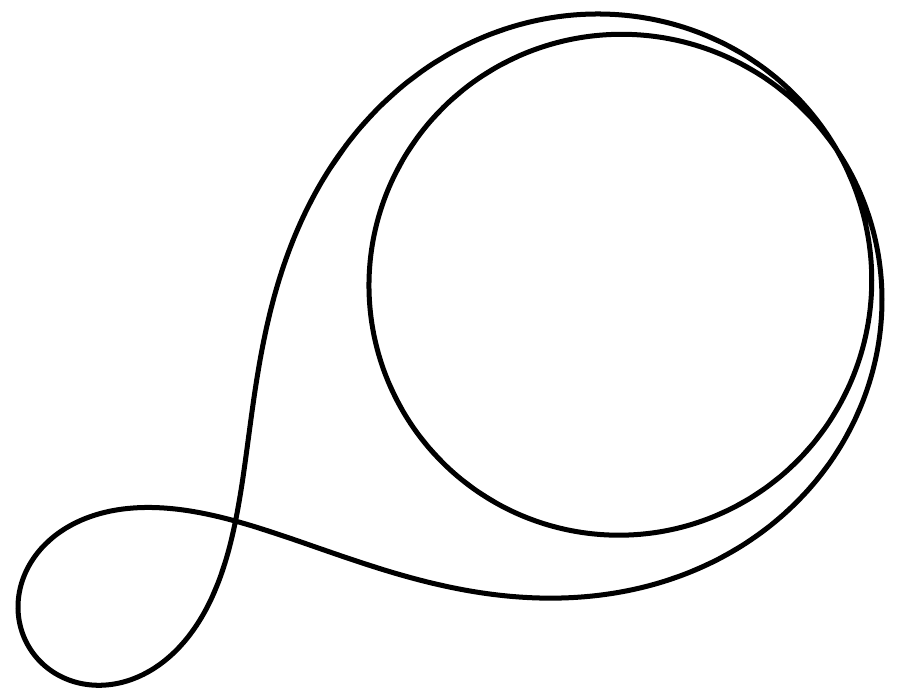}
	\includegraphics[width=1.85cm]{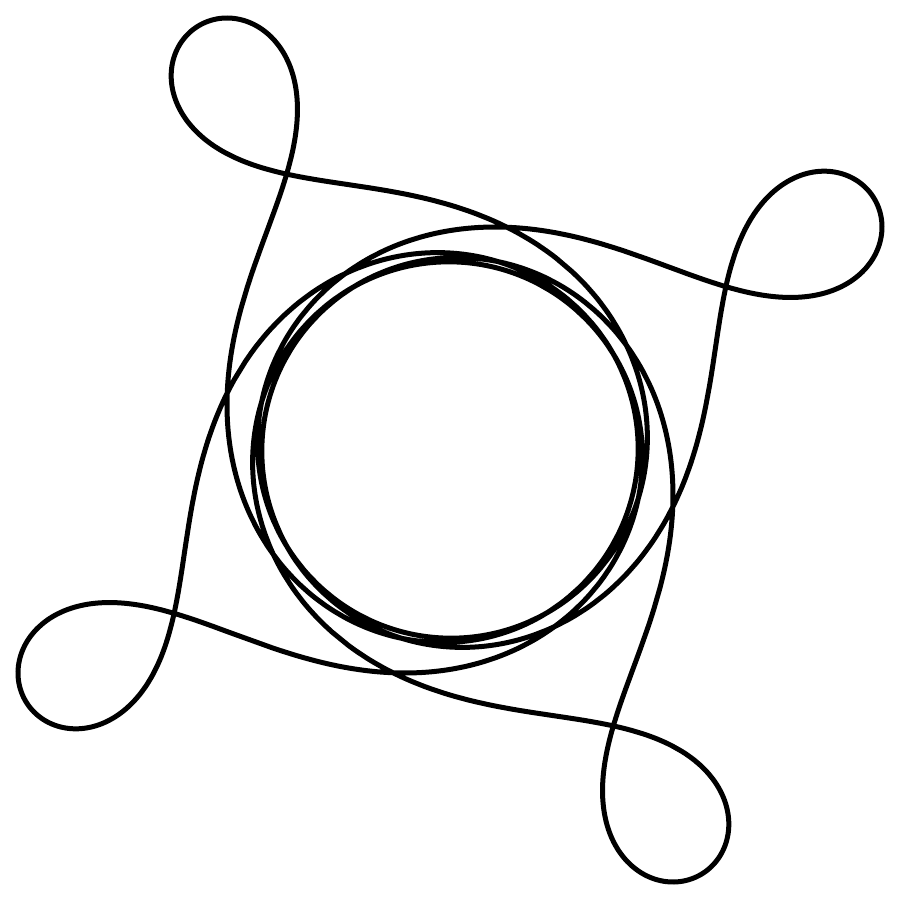}
	\includegraphics[width=1.85cm]{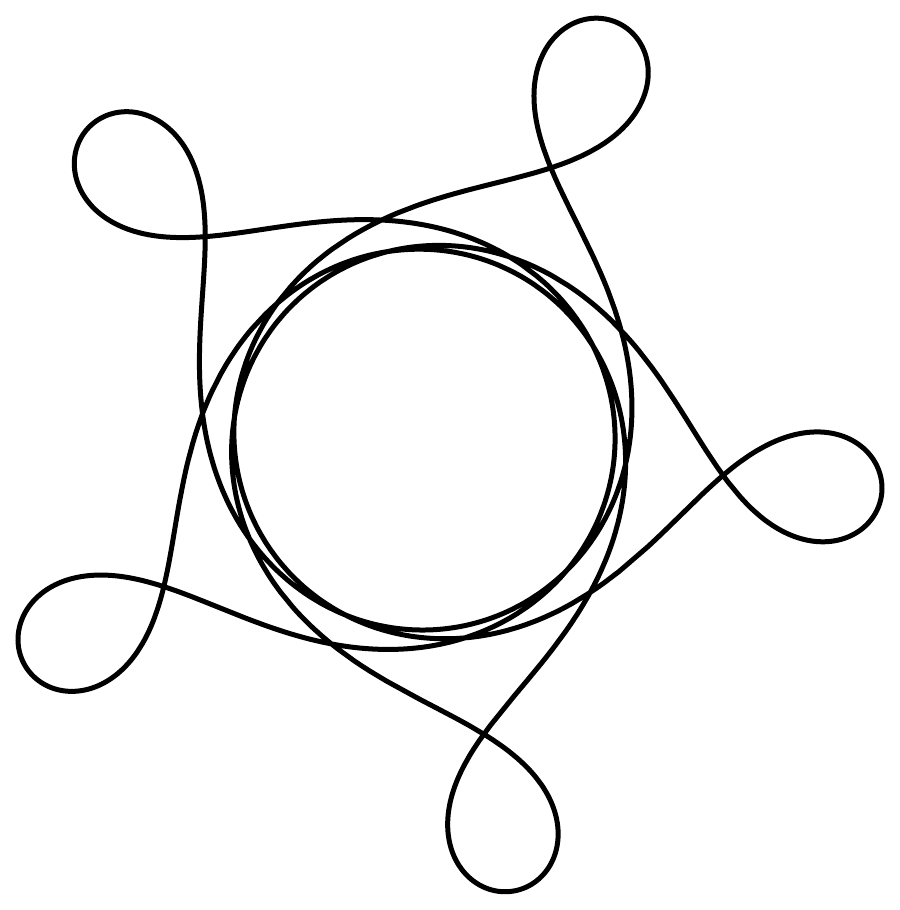}
	\includegraphics[width=1.85cm]{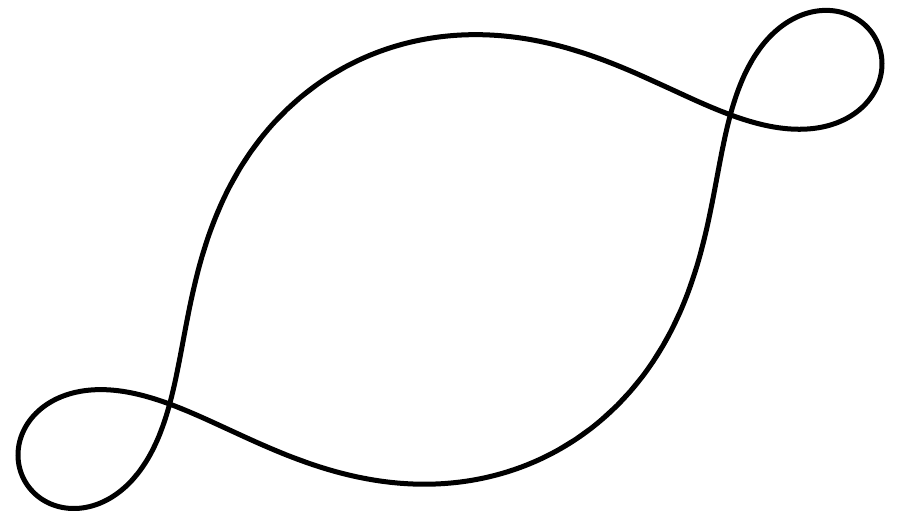}
	\includegraphics[width=1.85cm]{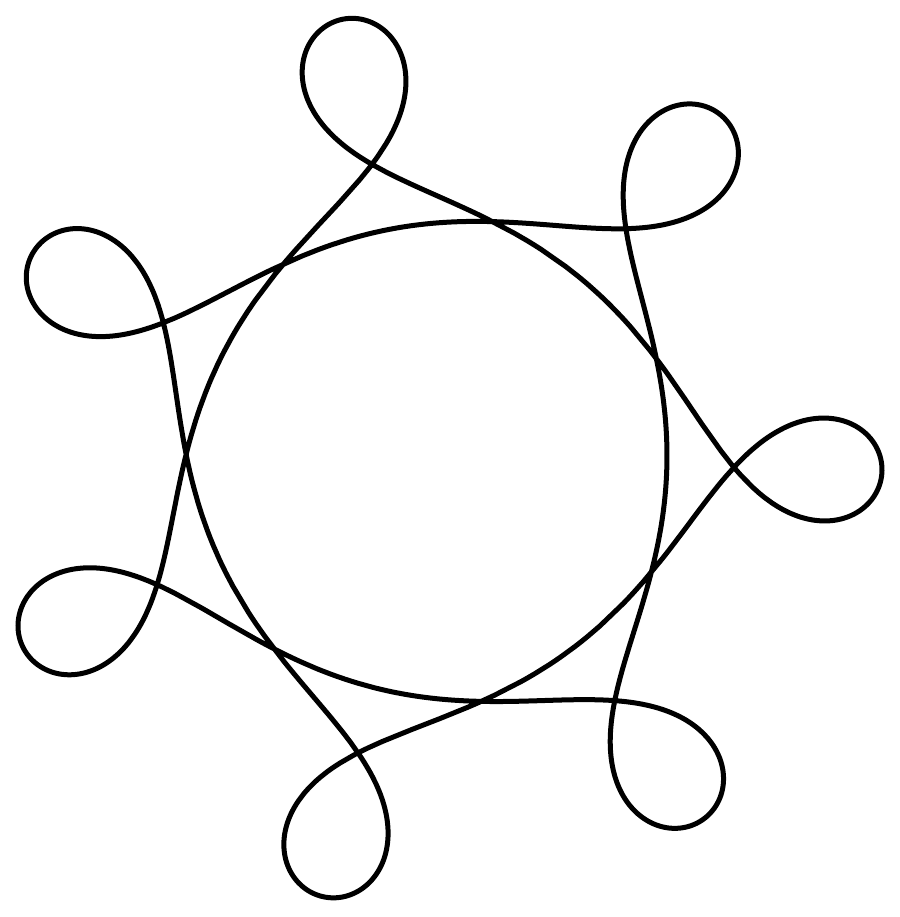}
	\includegraphics[width=1.85cm]{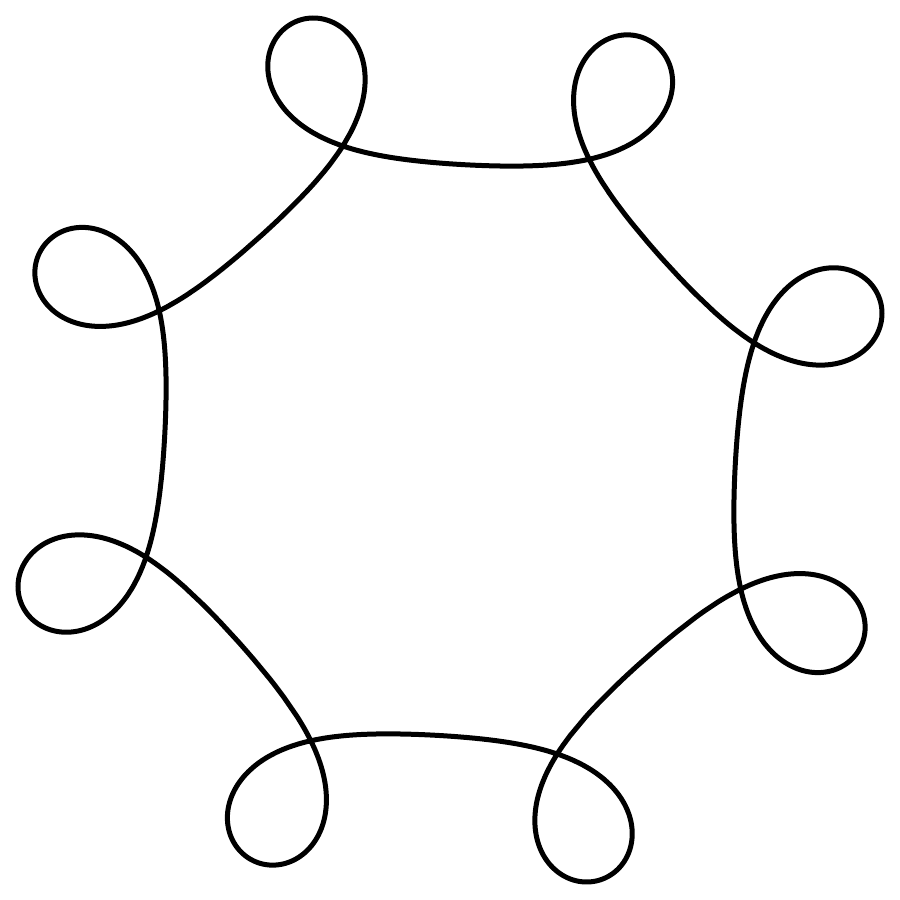}
\caption{ \label{fig:single-circletons} Single-circletons: In the first row the only deformation of the twice-wrapped circle. In the second row the two deformations of the thrice-wrapped circle, and so on.}
\end{figure}
A $(k,\,\omega)$-circleton is the curve obtained by dressing a circle of radius 1 with wrapping number $\omega$ by the simple factor of Lemma~\ref{th:circleton}. By the next result, the bending energy does not change.
\begin{figure}[t] 
%\centering
  	\includegraphics[width=4cm]{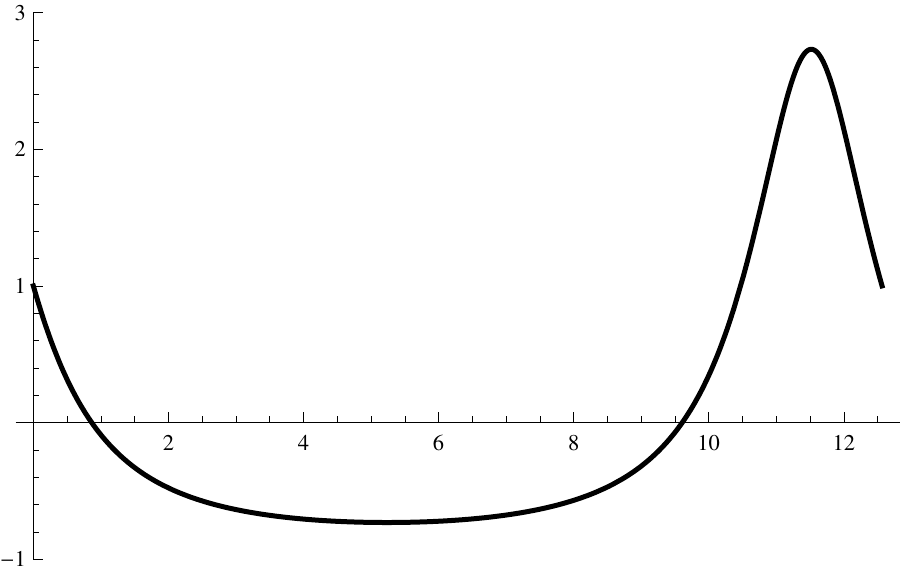}\\
  	\includegraphics[width=4cm]{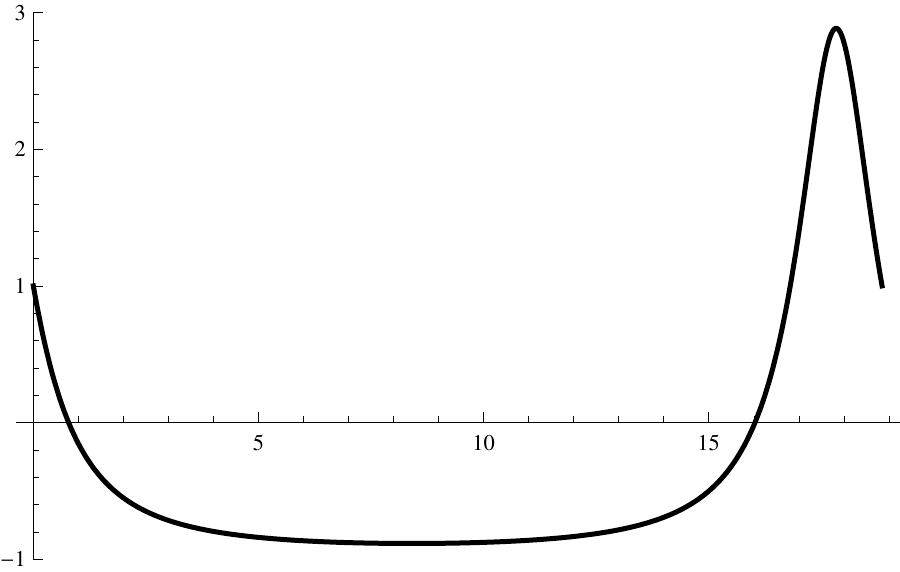}
  	\includegraphics[width=4cm]{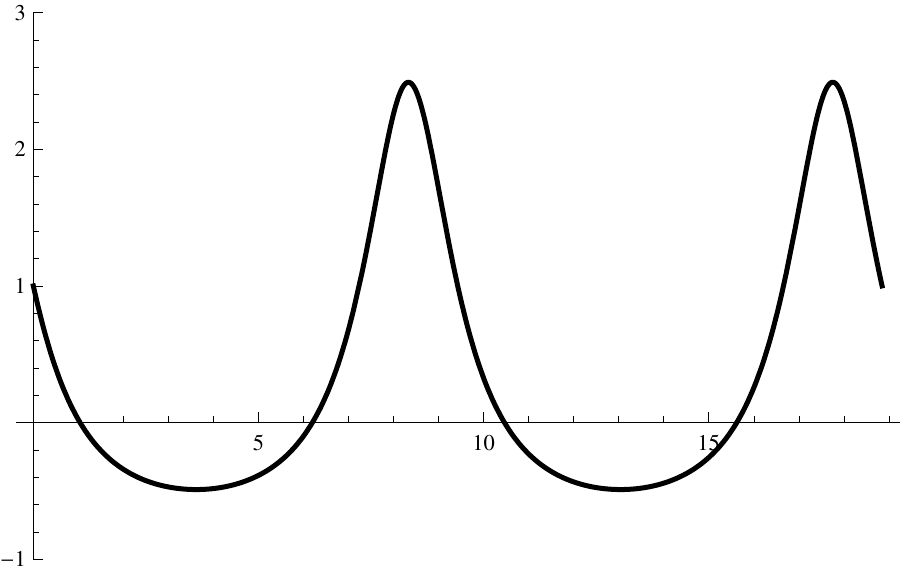}\\
	\includegraphics[width=4cm]{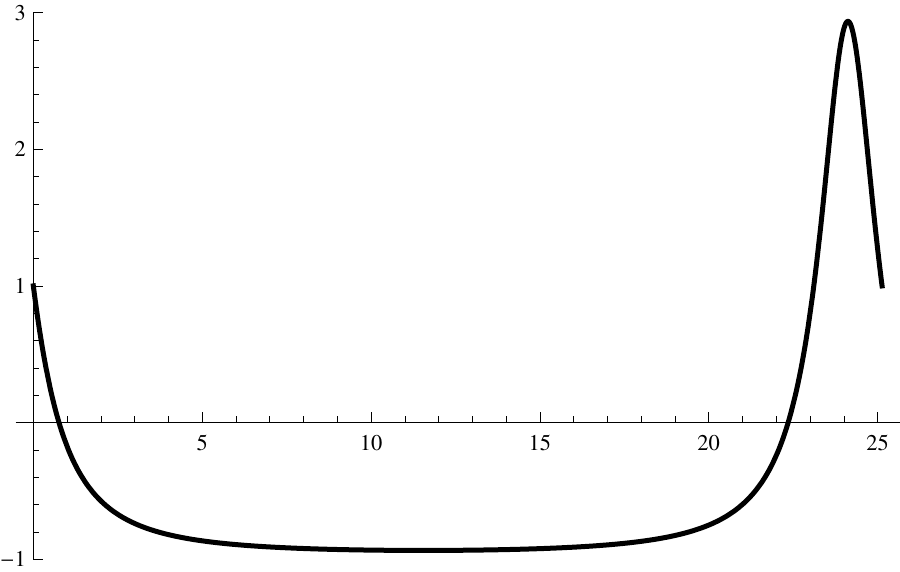}
  	\includegraphics[width=4cm]{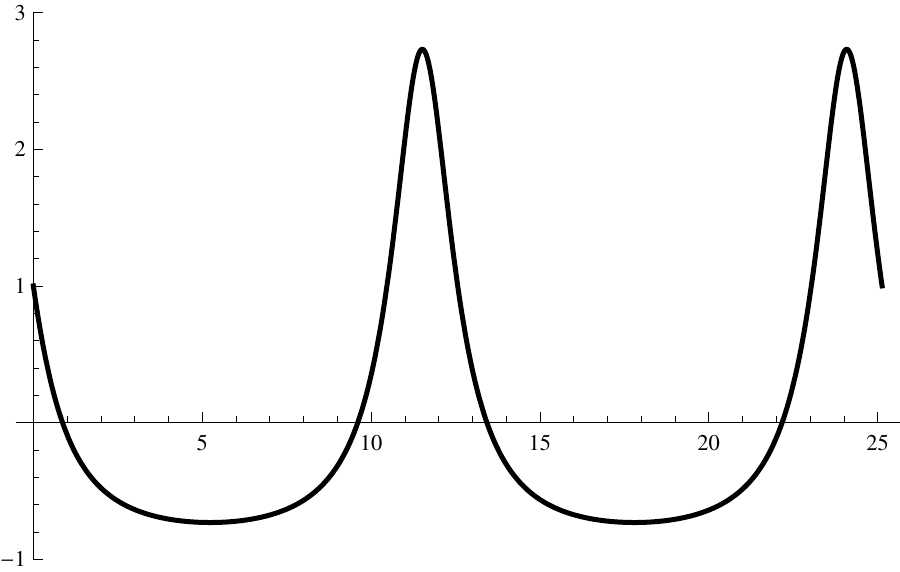}
  	\includegraphics[width=4cm]{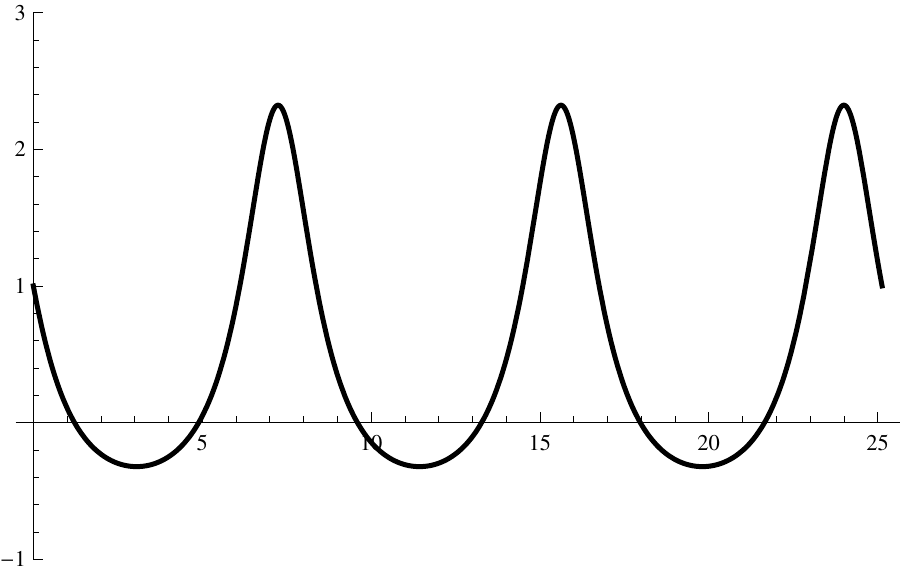}\\
	\includegraphics[width=4cm]{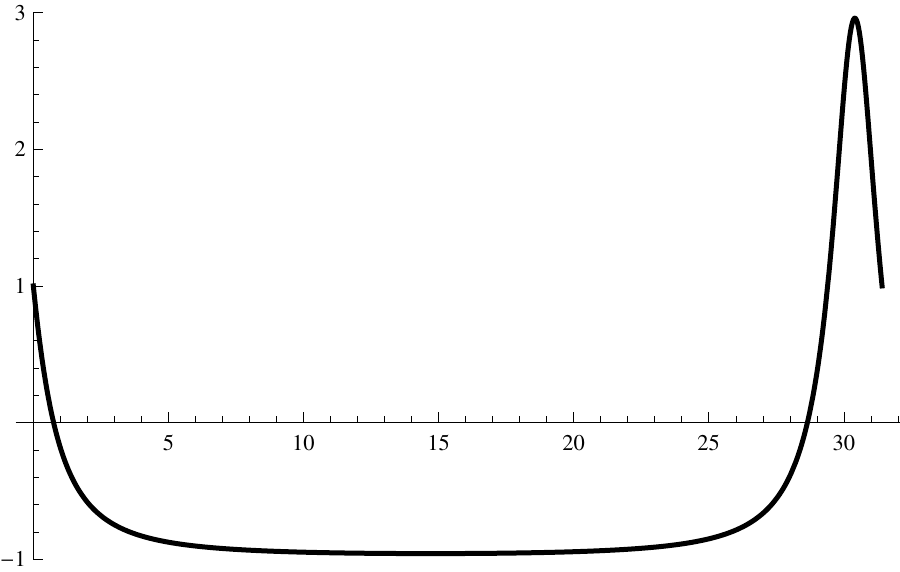}
  	\includegraphics[width=4cm]{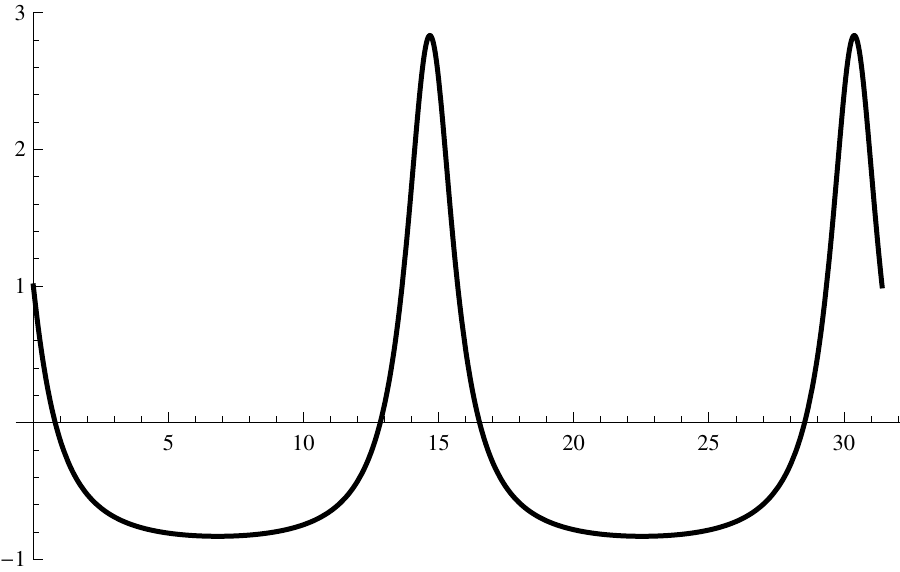}
  	\includegraphics[width=4cm]{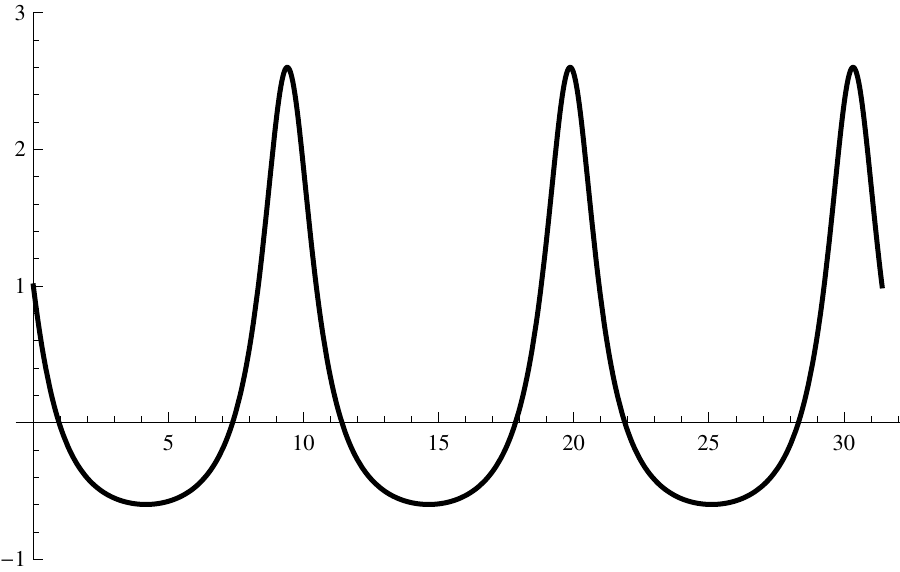}
	\includegraphics[width=4cm]{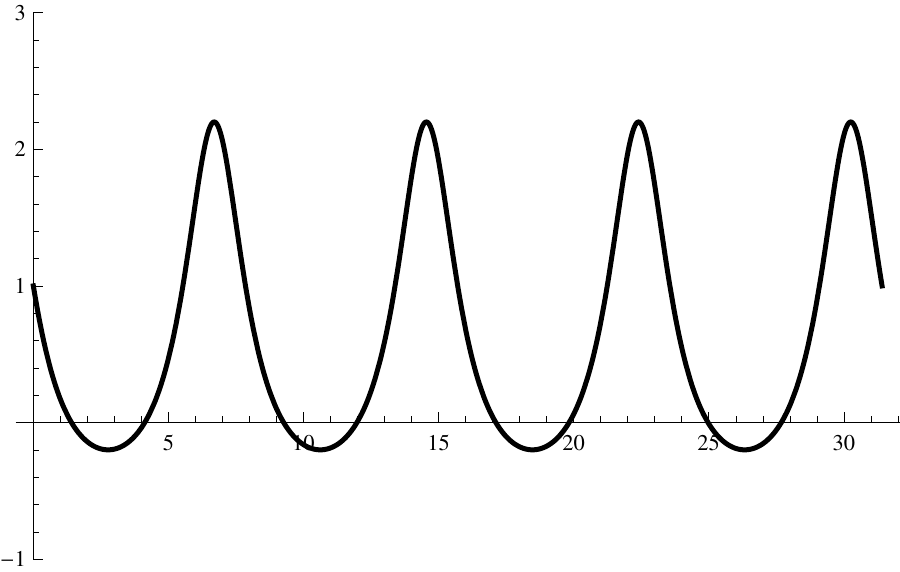}
\caption{ \label{fig:curvatures}
Plots of curvature functions of single-circletons over one extrinsic period $[0,\,2\pi \omega ]$: On top the curvature function of the $(1,2)$-circleton. In the second row the curvature functions of the $(1,\,3)$-circleton and the $(2,\,3)$-circleton, and so on.}
\end{figure}
\begin{proposition}
The curvature function of the $(k,\,\omega)$-circleton has $k$ peaks on the interval $[0,\,2\omega \pi]$. The bending energy of a  $(k,\,\omega)$-circleton is
\[
	\int_0^{2\omega\pi} \kappa^2(t) \,dt = 2\omega \pi\,.
\]
The curvature function $\kappa$ of a single-circleton satisfies $-1 < \kappa (t) <3$.
\end{proposition}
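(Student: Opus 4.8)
The plan is to produce an explicit formula for the complex curvature of the $(k,\omega)$-circleton and then read off the three claims. By Lemma~\ref{th:circleton} I may assume the dressing is by the diagonal simple factor $h_{L,\alpha}$ with $L=[1:0]$ and $\alpha=\mi\beta$, $\beta=\sqrt{1-k^2/\omega^2}\in(0,1)$. The point of this normalisation is that $\sqrt{1+\alpha^2}=k/\omega$, so evaluating the $\omega$-wrapped unit circle's extended frame at $\lambda=\alpha$ gives an explicit one-parameter subgroup $F_\alpha(t)=\exp(\theta A)$ of $\mathrm{SL}_2(\bbR)$, where $\theta=kt/(2\omega)$ and $A^2=-\mathbbm{1}$. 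By Lemma~\ref{th:sf-dressing} the dressed extended frame is $\tilde F_\lambda=h_{L,\alpha}\,F_\lambda\,h_{L',\alpha}^{-1}$ with $L'=F_\alpha^{-1}L$, and since $L'(0)=L$ one has $\tilde F_\lambda(0)=\mathbbm{1}$, so $\tilde F_\lambda$ is genuinely an extended frame; its complex curvature $\tilde q$ then satisfies $\tilde F_\lambda^{-1}\dot{\tilde F}_\lambda=V(\tilde q)$.

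The crucial step is the extraction of $\tilde q$ by evaluating the Maurer--Cartan form at $\lambda=0$. There both simple factors degenerate: $h_{L,\alpha}(0)$ is a constant matrix and $h_{L',\alpha}(0)=\mi R$ with $R=\pi_{L'}^\perp-\pi_{L'}$ a real symmetric involution. Parametrise $L'(t)=[\cos\psi(t):\sin\psi(t)]$ and set $\chi=2\psi$. Using $F_0^{-1}\dot F_0=\tfrac12\sigma_1$ for the unit circle, the conjugations collapse to $\tilde F_0^{-1}\dot{\tilde F}_0=\tfrac12 R\sigma_1 R+R\dot R$, and I expect the two algebraic identities $R\sigma_1 R=-\sigma_1$ and $R\dot R=\dot\chi\,\sigma_1$ (immediate from the explicit $2\times 2$ form of $R$ in terms of $\chi$) to give $\tilde F_0^{-1}\dot{\tilde F}_0=(\dot\chi-\tfrac12)\sigma_1$. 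Thus the single-circleton turns out to be an \emph{planar} curve with signed curvature $\kappa(t)=\tilde q(t)=2\dot\chi(t)-1$. A short computation with $F_\alpha$ — essentially $\dot b\,a-b\,\dot a\equiv\tfrac12$ together with a Pythagorean simplification — should yield the clean closed form
\[
	\kappa(t)=2\dot\chi(t)-1=\frac{2}{a(t)^2+b(t)^2}-1,\qquad a^2+b^2=c^2+cd\sin\!\Bigl(\tfrac{kt}{\omega}+\delta\Bigr),
\]
where $(a,b)$ spans $L'(t)$, $c=\omega/k$, $d=\sqrt{c^2-1}$, and $\delta$ is a constant phase.

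From this formula the three assertions are routine. Since $\kappa$ is a strictly decreasing function of $\sin(kt/\omega+\delta)$, whose argument increases by $2k\pi$ over $[0,2\omega\pi]$, it has exactly $k$ local maxima there, which is the peak count. The bound $\kappa>-1$ is immediate from $\kappa+1=2/(a^2+b^2)>0$, and $\kappa<3$ reduces to $a^2+b^2>\tfrac12$, which holds because $\min_t(a^2+b^2)=c(c-d)=c/(c+d)>\tfrac12$ using $c>d$; both inequalities are strict since $a^2+b^2$ is bounded and bounded away from $0$. For the bending energy I would expand $\kappa^2=4\dot\chi^2-4\dot\chi+1$ and evaluate $\int_0^{2\omega\pi}\dot\chi\,dt$ and $\int_0^{2\omega\pi}\dot\chi^2\,dt$ via the substitution $\phi=kt/\omega+\delta$, reducing each to $k$ copies of the standard integrals $\int_0^{2\pi}(c^2+cd\sin\phi)^{-1}d\phi$ and $\int_0^{2\pi}(c^2+cd\sin\phi)^{-2}d\phi$, both equal to $2\pi/c$ since $c^2-d^2=1$; this gives $\int_0^{2\omega\pi}\dot\chi\,dt=\int_0^{2\omega\pi}\dot\chi^2\,dt=2\pi k$, hence $\int_0^{2\omega\pi}\kappa^2\,dt=8\pi k-8\pi k+2\omega\pi=2\omega\pi$.

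The main obstacle is the curvature extraction: recognising that the evaluation at $\lambda=0$ is the right move, checking that $\tilde F=h_{L,\alpha}\#F$ is an extended frame there, and verifying the collapse of the dressed Maurer--Cartan form to the single scalar $2\dot\chi-1$ via $R\sigma_1R=-\sigma_1$ and $R\dot R=\dot\chi\,\sigma_1$. Once the closed form for $\kappa$ is in hand, the peak count, the uniform bound, and the bending-energy identity are all short exercises in trigonometry and standard definite integrals.
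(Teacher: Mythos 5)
Your argument is correct, and it rests on the same underlying principle as the paper --- the curvature is read off from $(h_{L,\alpha}\# F_\lambda)^{-1}\tfrac{d}{dt}(h_{L,\alpha}\# F_\lambda)$ at $\lambda=0$ --- but your execution is genuinely different and more informative. The paper simply exhibits a large closed-form expression for $\kappa$ (evidently from a direct symbolic computation) and asserts that the three claims follow; you instead track the dressed eigenline $L'(t)=F_\alpha^{-1}L$, exploit that $F_\alpha$ is a real one-parameter rotation group because $\sqrt{1+\alpha^2}=k/\omega$, and collapse the dressed Maurer--Cartan form at $\lambda=0$ via $R\sigma_1R=-\sigma_1$ and $R\dot R=\dot\chi\,\sigma_1$ (both identities check out, as does $\dot b a-b\dot a=\tfrac12$), arriving at $\kappa=2\dot\chi-1=2/(c^2+cd\sin(kt/\omega+\delta))-1$ with $c=\omega/k$, $d=\sqrt{c^2-1}$. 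This agrees with the paper's formula (for $(k,\omega)=(1,2)$ both give $\kappa(0)=1$ and $\kappa(2\pi)=-5/7$, and both have range $1\pm 2\sqrt{1-k^2/\omega^2}$), shows in passing that single-circletons are planar, and makes the three claims transparent where the paper leaves them implicit: the $k$ peaks are the $k$ minima of the sine on $[0,2\omega\pi]$; the bounds sharpen to $1-2\sqrt{1-k^2/\omega^2}<\kappa<1+2\sqrt{1-k^2/\omega^2}$, hence $-1<\kappa<3$ strictly, using $\min(a^2+b^2)=c(c-d)=c/(c+d)>\tfrac12$; and the bending energy follows from $\int_0^{2\pi}(A+B\sin\phi)^{-1}d\phi=2\pi/\sqrt{A^2-B^2}$ and $\int_0^{2\pi}(A+B\sin\phi)^{-2}d\phi=2\pi A/(A^2-B^2)^{3/2}$, which with $A=c^2$, $B=cd$, $c^2-d^2=1$ give $\int_0^{2\omega\pi}\dot\chi\,dt=\int_0^{2\omega\pi}\dot\chi^2\,dt=2\pi k$ and so $\int_0^{2\omega\pi}\kappa^2\,dt=2\omega\pi$. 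Two cosmetic points you could add: the branch ambiguity $h_{L',\alpha}(0)=\pm\mi R$ is harmless because the sign cancels in both $R\sigma_1R$ and $R\dot R$ (and the product $h_{L,\alpha}F_\lambda h_{L',\alpha}^{-1}$ is single-valued, so evaluation at $\lambda=0$ is legitimate even though each factor lives in $\Lambda^-_r$); and the endpoints of $[0,2\omega\pi]$ are not peaks since $\sin\delta=-d/c\neq-1$, so the count is exactly $k$ on the closed interval.
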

\begin{proof}
The claims follows form the explicit formula for the curvature of a $(k,\,\omega)$-circleton given by 
\[
	\kappa (t) = \tfrac{(k^2 - \omega^2 ) \left(2 k \omega  \sqrt{1-\frac{k^2}{\omega ^2}} \left(2 \sin \left(\frac{k t}{\omega }\right)-\sin \left(\frac{2 k t}{\omega }\right)\right)+\left(\omega
   ^2-2 k^2\right) \cos \left(\frac{2 k t}{\omega }\right)+4 (k^2 - \omega^2 ) \cos \left(\frac{k t}{\omega }\right)\right)+5 k^2 \omega ^2-3 \omega ^4}{\omega  \left(4 k
   \sqrt{1-\frac{k^2}{\omega ^2}} \sin \left(\frac{k t}{\omega }\right) \left((k^2 -\omega^2 ) \cos \left(\frac{k t}{\omega }\right)+\omega ^2\right)-k^2 \omega +3 \omega
   ^3\right)+\left(2 k^4-3 k^2 \omega ^2+\omega ^4\right) \cos \left(\frac{2 k t}{\omega }\right)+4 \omega ^2 (k^2 -\omega^2 )  \cos \left(\frac{k t}{\omega }\right)}\,,
\]
which is obtained from $(h_{L,\alpha} \# F_\lambda)^{-1} \tfrac{d}{dt} h_{L,\alpha} \# F_\lambda $.
\end{proof}
Each peak of the curvature function corresponds to a localized region of high curvature, so a little loop on the curve, see figures ~\ref{fig:single-circletons} and \ref{fig:curvatures}. 
%
%%%%%%%%%%%%%%%%%%%%%%%%
%
\subsection{Multi-circletons}
The classification of multi-circletons is now immediate. The monodromy of a circle with wrapping number $\omega$ has $\omega - 1$ many resonance points. Each resonance point may be used only once as the singularity of a simple factor. The order in which this is done is irrelevant, since simple factors are diagonal. so commute. In conclusion we obtain the following 
\begin{proposition}
For an $\omega$-wrapped circle, there are up to isospectral deformations precisely $\bigl( \begin{smallmatrix} k \\ \omega -1 \end{smallmatrix} \bigr)$ many $k$-multi-circletons.
\end{proposition}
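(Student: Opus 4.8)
The plan is to turn the statement into a count of subsets of the resonance-point set, using the structure results of the preceding sections together with the commutativity of the relevant simple factors, and letting the isospectral action absorb the remaining freedom. First I would recall from the Proposition preceding the definition of single-circletons that the monodromy $M_\lambda(2\omega\pi)$ of the $\omega$-wrapped circle of radius $1$ has exactly $\omega-1$ resonance points off the real line, namely (up to the involution $\alpha\mapsto\bar\alpha$, which by Lemma~\ref{th:circleton} only changes the resulting curve by an isometry) the points $\alpha_k=\sqrt{k^2/\omega^2-1}$ for $k=1,\dots,\omega-1$. By the Corollary at the end of the Simple factors section, dressing a $2\omega\pi$-periodic curve by $h_{L,\alpha}$ again yields a $2\omega\pi$-periodic curve exactly when $\alpha$ is a resonance point of the current monodromy and $L$ an eigenline of $M_\alpha^{-1}(2\omega\pi)$; hence any $k$-multi-circleton is, up to isometry, produced from the $\omega$-wrapped circle by dressing with a product $h_{L_k,\alpha_{i_k}}\cdots h_{L_1,\alpha_{i_1}}$ whose $k$ singularities are resonance points.

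Next I would show the singularities are forced to be pairwise distinct and that the order of the factors is immaterial. A direct evaluation of the explicit extended frame of the circle gives $F_{\alpha_k}(2\omega\pi)=(-1)^k\,\mathbbm{1}$, since $\sqrt{1+\alpha_k^2}=k/\omega$ and hence $\tfrac12(2\omega\pi)\sqrt{1+\alpha_k^2}=k\pi$. Thus the monodromy at every resonance point is the scalar $\pm\mathbbm{1}$: it fixes every line, so every choice of $L$ is admissible, and conjugation by a simple factor singular at a \emph{different} resonance point leaves it scalar, so that point remains available after dressing; by contrast a simple factor singular at an \emph{already used} $\alpha$ would have to be conjugated by a map singular at $\alpha$, so no resonance point can serve twice. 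Since every line is admissible, Lemma~\ref{th:circleton} lets us, after a single ambient isometry, take each $L_j$ to be the line $[1:0]$ fixed by that lemma, so all the $h_{L_j,\alpha_{i_j}}$ are diagonal; diagonal matrices commute, and therefore the dressed frame depends only on the set $\{\alpha_{i_1},\dots,\alpha_{i_k}\}$ of chosen resonance points, not on the order of the factors.

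It remains to pass to isospectral deformation classes. Dressing by a simple factor with singularity $\alpha$ raises the spectral genus by one and adjoins the pair $\pm\alpha$ to the branch set of the spectral curve, so a $k$-multi-circleton built from $\{\alpha_{i_1},\dots,\alpha_{i_k}\}$ has a genus-$k$ spectral curve branched over $\pm\mi,\pm\alpha_{i_1},\dots,\pm\alpha_{i_k}$, which recovers the chosen subset. Since every closed finite type curve lies in the dressing orbit of a circle and the isospectral action is transitive on each isospectral set $I(a)$ by the methods of \cite{hks1}, two $k$-multi-circletons are related by an isospectral deformation (and an isometry) if and only if they come from the same subset of resonance points. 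The number of such subsets is $\binom{\omega-1}{k}$, the number of ways to choose $k$ of the $\omega-1$ resonance points, which is the asserted count.

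I expect the main obstacle to be the spectral-curve bookkeeping of the third step: establishing rigorously that dressing by $h_{L,\alpha}$ enlarges the branch set of the spectral curve exactly by $\{\pm\alpha\}$ and raises the genus by one, so that distinct subsets genuinely give curves that are not isospectrally equivalent while the same subset with different admissible lines $L_j$ only moves within a single isospectral set. Once this is in place, the distinctness of the $\alpha_{i_j}$, the commutativity of the diagonal simple factors, and the final count are all routine.
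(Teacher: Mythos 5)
Your proposal takes essentially the same route as the paper: identify the $\omega-1$ resonance points of the $\omega$-wrapped circle's monodromy, observe that each can be used only once as the singularity of a simple factor, reduce via the normalization of Lemma~\ref{th:circleton} to diagonal, hence commuting, simple factors so that only the chosen subset of resonance points matters, and let isospectral deformations absorb the remaining freedom, yielding the count $\binom{\omega-1}{k}$. The paper's own justification is only a brief sketch of exactly these points, and your additional details (the scalar monodromy $(-1)^{k}\mathbbm{1}$ at the resonance points, and the spectral-curve bookkeeping you flag as the step needing verification) fill in material the paper leaves implicit rather than diverging from its argument.
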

Figures~\ref{fig:double-circletons1} and \ref{fig:double-circletons2} show some double-circletons, which are circles dressed by a product of two simple factors. We use the notation $(k_1,\,k_2;\omega )$ to specify the two resonance points obtained from $k_1$ and $k_2$, and where $\omega$ is the wrapping number of the initial circle. 
\begin{figure}
%\centering
  	\includegraphics[width=1.85cm]{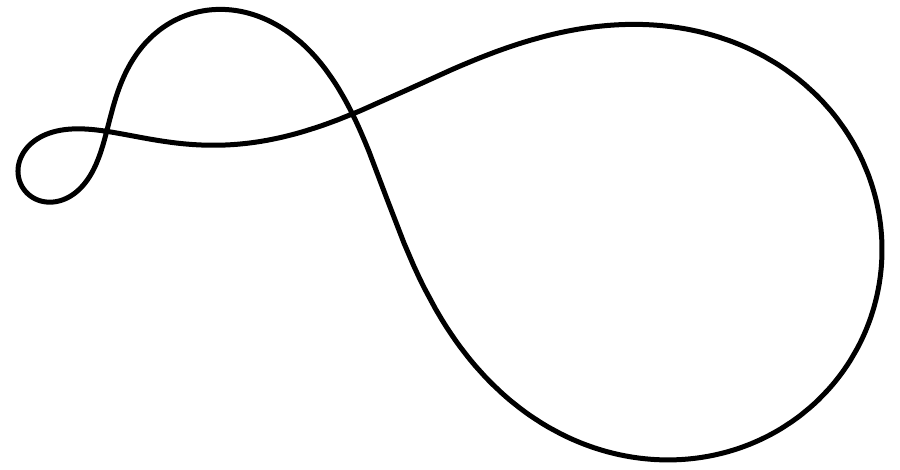}\\
	\includegraphics[width=1.85cm]{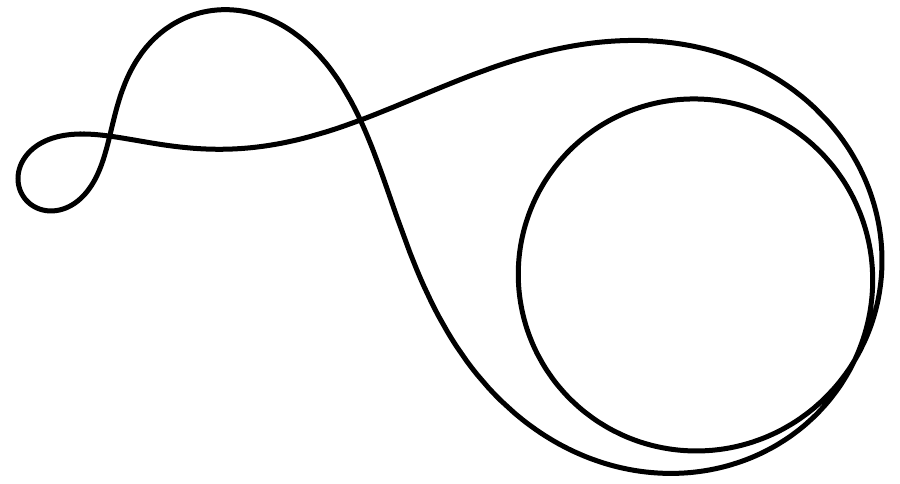}
	\includegraphics[width=1.85cm]{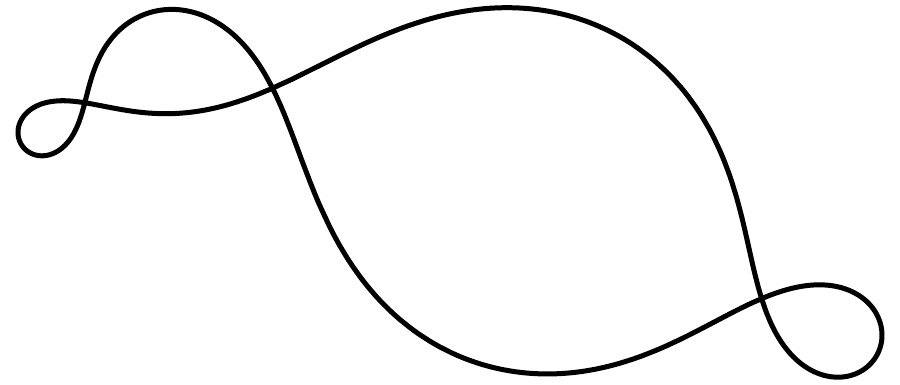}
	\includegraphics[width=1.85cm]{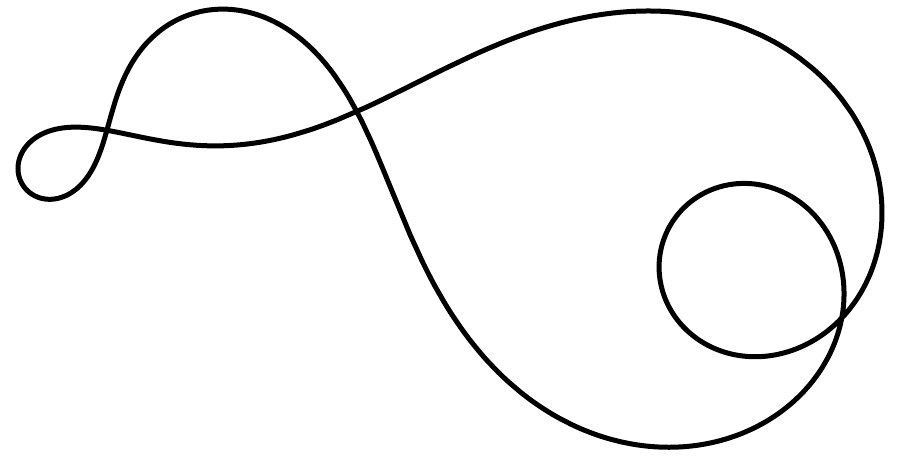}\\
	\includegraphics[width=1.85cm]{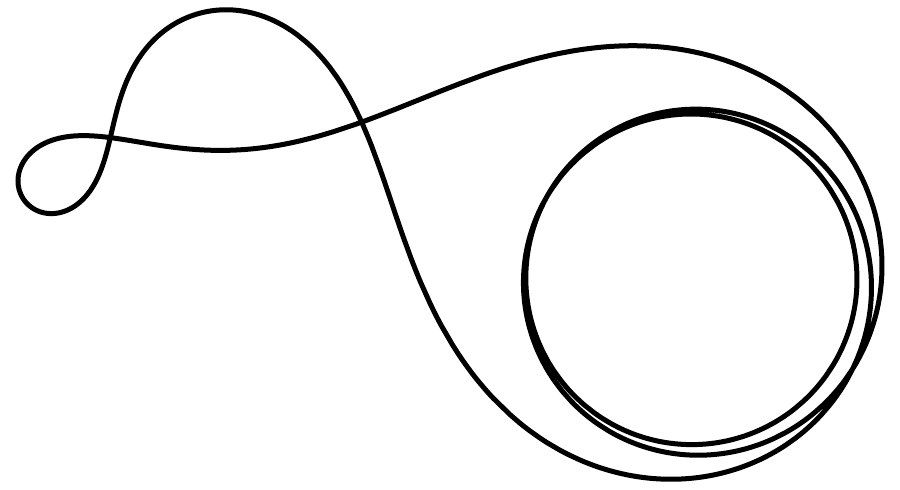}
	\includegraphics[width=1.85cm]{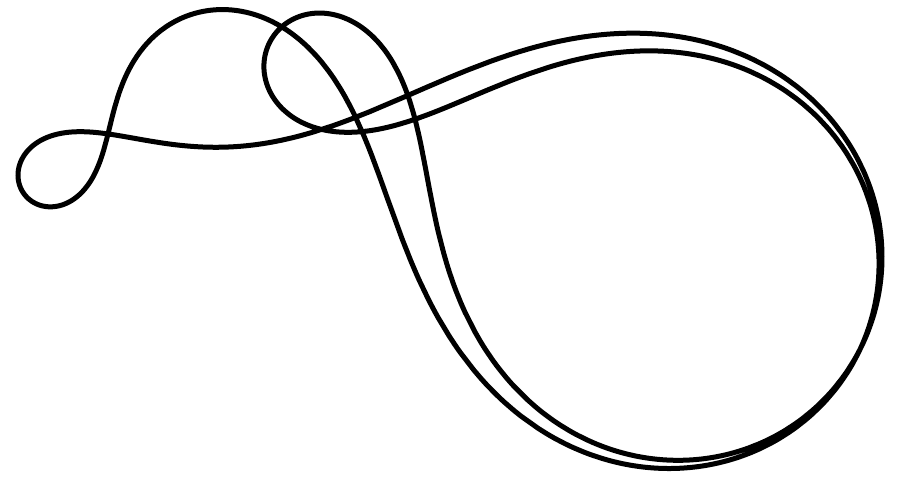}
	\includegraphics[width=1.85cm]{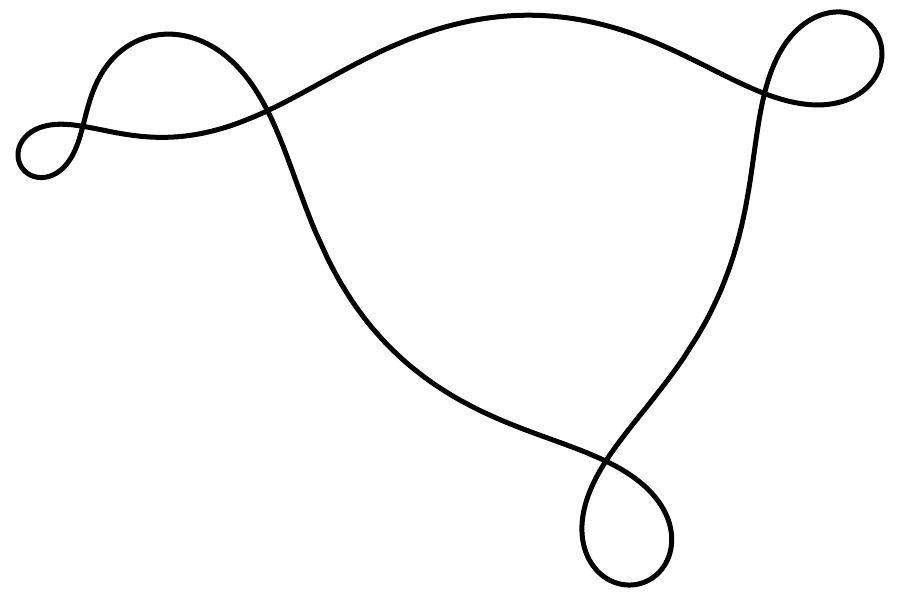}
	\includegraphics[width=1.85cm]{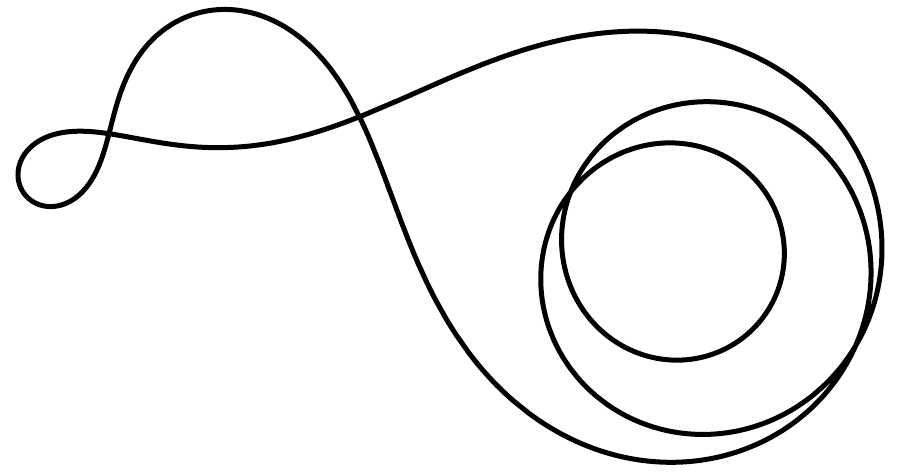}
	\includegraphics[width=1.85cm]{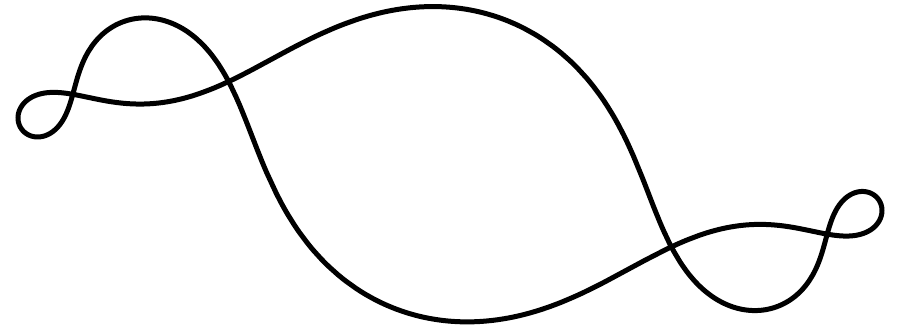}
	\includegraphics[width=1.85cm]{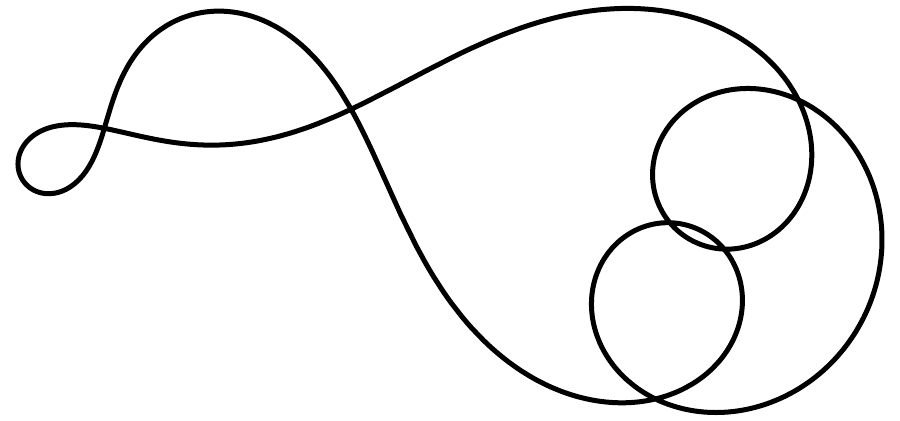}
\caption{ \label{fig:double-circletons1}
Double-circletons: In the first row a $(1,2;3)$-double-circleton. In the second row from left to right:  $(1,2;4),\,(1,3;4)$ and a $(2,3;4)$-double-circletons.  In the third row from left to right: $(1,2;5),\,(1,3;5),\,(1,4;5),\,(2,3;5),\,(2,4;5) $ and $(3,4;5)$-double-circletons.}
\end{figure}
\begin{figure}
%\centering
  	\includegraphics[width=2.5cm]{circleton1323}
	\includegraphics[width=2.5cm]{circleton1434}
	\includegraphics[width=2.5cm]{circleton1545}
	\includegraphics[width=2.5cm]{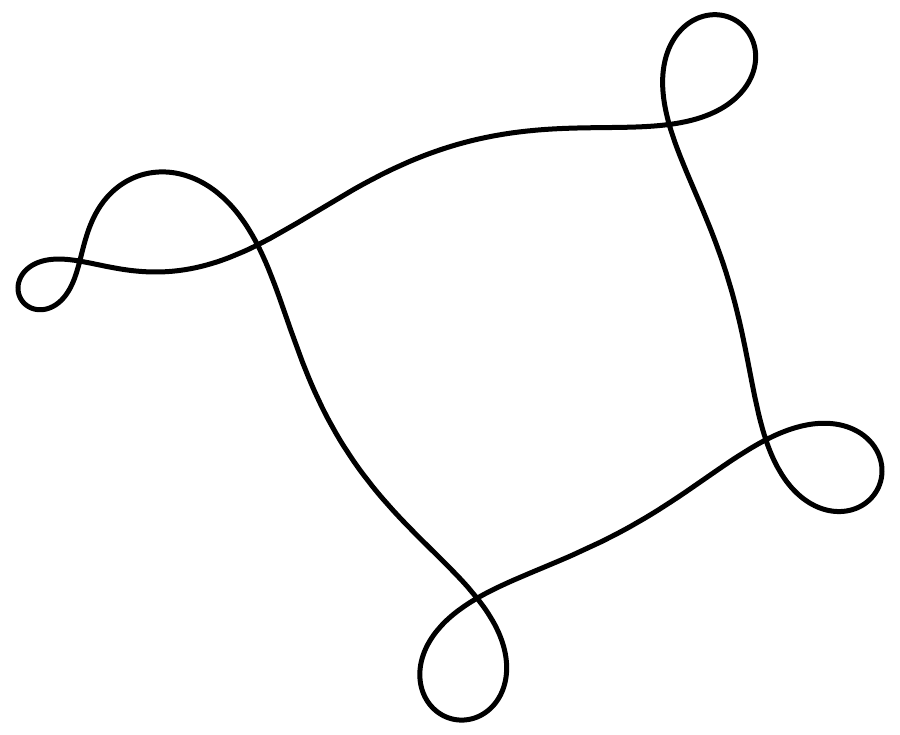}
	\includegraphics[width=2.5cm]{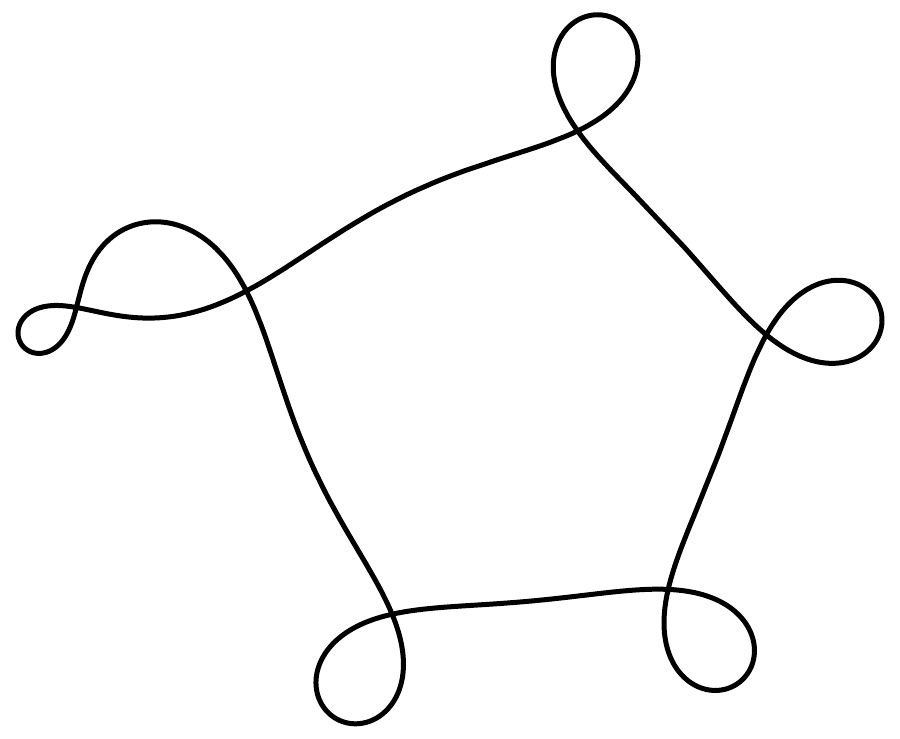}
	\includegraphics[width=2.5cm]{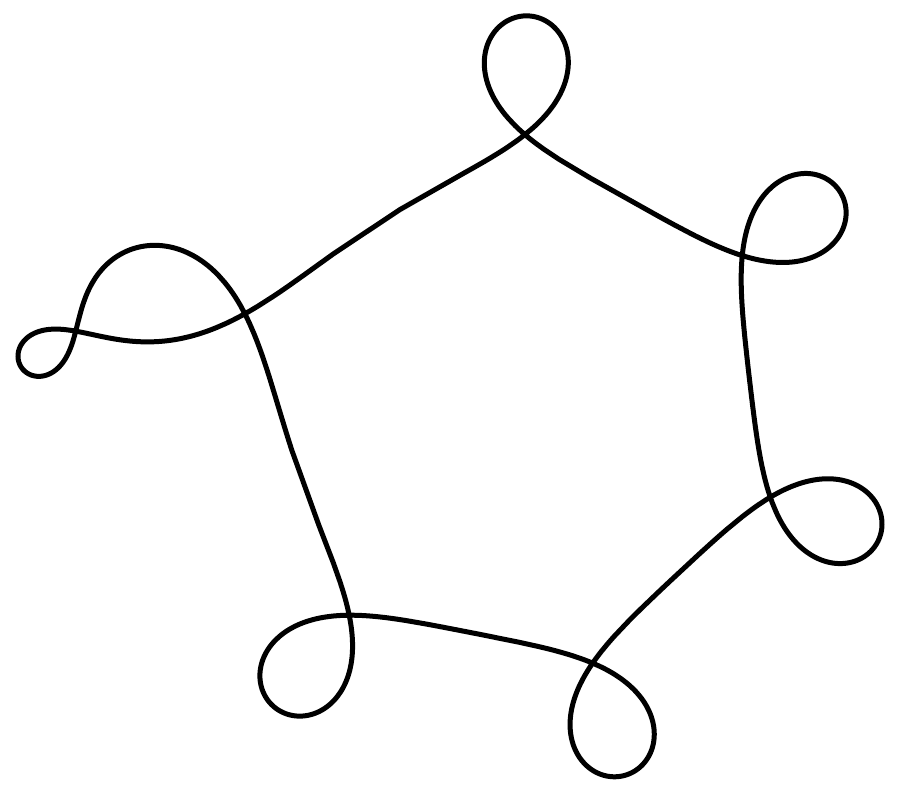}
\caption{ \label{fig:double-circletons2} 
Some $(1,\omega-1;\omega)$ double-circletons for $3 \leq \omega \leq 8$. Dressing twice adds a little loop to the once-dressed curve.}
\end{figure}
%
%%%%%%%%%%%%%%%%%%%%%%%%%%%%%%%%%%
\bibliographystyle{amsplain}

\begin{thebibliography}{10}

\bibitem{BurP:dre}
F.~E. Burstall and F.~Pedit, \emph{Dressing orbits of harmonic maps}, Duke
  Math. J. \textbf{80} (1995), no.~2, 353--382.

\bibitem{CalI2}
A.~Calini and T.~Ivey, \emph{Finite-gap solutions of the vortex filament
  equation: isoperiodic deformations}, J. Nonlinear Sci. \textbf{17} (2007),
  no.~6, 527--567.

\bibitem{calini-ivey1998}
A.~Calini and T.~A. Ivey, \emph{B\"acklund transformations and knots of
  constant torsion}, J. Knot Theory Ramifications \textbf{7} (1998), no.~6,
  719--746.

\bibitem{DorPW}
J.~Dorfmeister, F.~Pedit, and H.~Wu, \emph{Weierstrass type representation of
  harmonic maps into symmetric spaces}, Comm. Anal. Geom. \textbf{6} (1998),
  no.~4, 633--668.

\bibitem{GriS1}
P.~G. Grinevich and M.~U. Schmidt, \emph{Period preserving nonisospectral flows
  and the moduli space of periodic solutions of soliton equations}, Phys. D
  \textbf{87} (1995), no.~1-4, 73--98.

\bibitem{GriS-curves}
P.~G. Grinevich and M.~U. Schmidt, \emph{Closed curves in $\mathbb{R}^3$: a
  characterization in terms of curvature and torsion, the {H}asimoto map and
  periodic solutions of the {F}ilament {E}quation}, SFB 288 preprint no.254,
  dg-ag/9703020, 1997.

\bibitem{hasimoto1972}
H.~{Hasimoto}, \emph{{A soliton on a vortex filament.}}, {J. Fluid Mech.}
  \textbf{51} (1972), 477--485.

\bibitem{hks1}
L.~Hauswirth, M.~Kilian, and M.~U. Schmidt, \emph{Finite type minimal surfaces
  in $\mathbb{S}^2 \times \mathbb{R}$}, Ill. J.Math \textbf{57} (2013), no.~3,
  697--741.

\bibitem{McI}
I.~McIntosh, \emph{Global solutions of the elliptic 2d periodic {T}oda
  lattice}, Nonlinearity \textbf{7} (1994), no.~1, 85--108.

\bibitem{PreS}
A.~Pressley and G.~Segal, \emph{Loop groups}, Oxford Science Monographs, Oxford
  Science Publications, 1988.

\bibitem{TerU}
C.~Terng and K.~Uhlenbeck, \emph{B\"{a}cklund transformations and loop group
  actions}, Comm. Pure and Appl. Math \textbf{LIII} (2000), 1--75.

\bibitem{ZakS2}
V.~E. Zakharov and A.~S. Shabat, \emph{Integration of the nonlinear equations
  of mathematical physics by the method of the inverse scattering problem ii},
  Funktsional. Anal. i Prilozhen. \textbf{13} (1978), 13--22.

\end{thebibliography}

\def\cydot{\leavevmode\raise.4ex\hbox{.}} \def\cprime{$'$}
\providecommand{\bysame}{\leavevmode\hbox to3em{\hrulefill}\thinspace}
\providecommand{\MR}{\relax\ifhmode\unskip\space\fi MR }
% \MRhref is called by the amsart/book/proc definition of \MR.
\providecommand{\MRhref}[2]{%
  \href{http://www.ams.org/mathscinet-getitem?mr=#1}{#2}
}
\providecommand{\href}[2]{#2}

\end{document}